\theoremstyle{plain}
\newtheorem{lemma}{Lemma}
\newtheorem{theorem}[lemma]{Theorem}
\newtheorem{corollary}[lemma]{Corollary}
\theoremstyle{definition}
\newtheorem{definition}[lemma]{Definition}
\theoremstyle{remark}
\newtheorem{remark}[lemma]{Remark}
\newtheorem{claim}[lemma]{Claim}
\newtheorem{notation}[lemma]{Notation}
\numberwithin{equation}{section}
\let\pgfmath@function@exp\relax 
\colorlet{gialloLimone}{yellow!80}
\newcommand{\R}{\mathbb{R}}
\newcommand{\N}{\mathbb{N}}
\newcommand{\Id}{\mathbb{I}}
\newcommand{\Z}{\mathbb{Z}}
\newcommand{\D}{\mathcal{D}}
\newcommand{\distrBdd}{ L^{\infty}}
\newcommand{\pw}{\mathfrak L^{\infty}}
\newcommand{\pwu}{\mathfrak L_{B}^{\infty}}
\newcommand{\pwx}{\mathfrak L_{L}^{\infty}}
\newcommand{\Gfr}{\mathfrak G}
\newcommand{\gfr}{{{\mathfrak g}}}
\newcommand{\gx}{{\mathpzc{g}_{L}}}
\newcommand{\gu}{{\mathcalligra{g}_{B}}}
\newcommand{\gsimple}{{g}}
\newcommand{\gd}{{g_{E}}}
\newcommand{\loc}{\text{\rm loc}}
\newcommand{\Ll}{\mathcal L}
\newcommand{\Ha}{\mathcal H}
\newcommand{\rc}{\mathrm c}
\newcommand{\norm}[1]{\lVert#1\rVert}
\newcommand{\paytau}[1]{\frac{\partial#1}{\partial{\ytau}}}
\newcommand{\pax}[1]{\frac{\partial#1}{\partial{x}}}
\newcommand{\pat}[1]{\frac{\partial#1}{\partial{t}}}
\newcommand{\reu}{\mathrm e_{1}}
\newcommand{\red}{\mathrm e_{2}}
\DeclareMathOperator{\diam}{diam}
\DeclareMathOperator{\infl}{Inf{}l}
\DeclareMathOperator{\ri}{r.i.}
\DeclareMathOperator{\clos}{clos}
\DeclareMathOperator{\BV}{BV}
\DeclareMathOperator{\Dt}{D_{\mathit t}}
\DeclareMathOperator{\Dx}{D_{\mathit x}}
\DeclareMathOperator{\Dif}{D}
\newcommand{\ytau}{y}
\DeclareMathOperator{\Dytau}{D_{\mathit y}}
\DeclareMathOperator{\pytau}{\partial_{\mathit\ytau}}
\DeclareMathOperator{\pt}{\partial_{\mathit t}}
\DeclareMathOperator{\px}{\partial_{\mathit x}}
\DeclareMathOperator{\ddt}{\frac{d}{d\mathit t}}
\DeclareMathOperator{\dds}{\frac{d}{d\mathit s}}
\DeclareMathAlphabet{\mathcalligra}{T1}{calligra}{m}{n}
\DeclareMathAlphabet{\mathpzc}{OT1}{pzc}{m}{it}
\newcounter{stepnb}
\newcounter{substepnb}
\newcommand{\firststep}{\setcounter{stepnb}{0}}
\newcommand{\firstsubstep}{\setcounter{substepnb}{0}}
\newcommand{\step}[1]{\vskip.3\baselineskip\emph{\addtocounter{stepnb}{1} \arabic{stepnb}: #1.}  }
\newcommand{\substep}[1]{\vskip.3\baselineskip\emph{\addtocounter{substepnb}{1} \arabic{stepnb}.\arabic{substepnb}: #1.}  }
\begin{document}

\begin{frontmatter}

\title{Eulerian, Lagrangian and Broad continuous solutions\\ to a balance law with non convex f{}lux I}

\author[GA]{G.~Alberti}
\ead{galberti1@dm.unipi.it}
\address[GA]{
Dipartimento di Matematica,
Universit\`a di Pisa,
largo Pontecorvo 5,
56127 Pisa,
Italy}

\author[SB]{S.~Bianchini}
\ead{bianchin@sissa.it}
\address[SB]{
SISSA-ISAS, Via Bonomea 265, 34136 Trieste, Italy}

\author[LC]{L.~Caravenna}
\ead{laura.caravenna@unipd.it}
\address[LC]{
Dipartimento di Matematica `Tullio Levi Civita', Universit\`a di Padova, via Trieste 63, 35121 Padova, Italy}

\begin{abstract}
We discuss different notions of \emph{continuous} solutions to the balance law \[\pt u  + \px (f(u )) =\gsimple \qquad\text{$\gsimple$ bounded, $f\in C^{2}$}\] extending previous works relative to the f{}lux $f(u)=u^{2}$. 
We establish the equivalence among distributional solutions and a suitable notion of Lagrangian solutions for general smooth f{}luxes.
{We eventually find that continuous solutions are Kruzkov iso-entropy solutions, which yields uniqueness for the Cauchy problem.} We also reduce the ODE on \emph{any} characteristics under the sharp assumption that the set of inf{}lection points of the f{}lux $f$ is negligible.
The correspondence of the source terms in the two settings is matter of the companion work~\cite{file2ABC}, where we include counterexamples when the negligibility on inf{}lection points fails.
\end{abstract}

\begin{keyword}
Balance law, Lagrangian description, Eulerian formulation. 
\MSC 35L60, 37C10, 58J45
\end{keyword}

\end{frontmatter}

\tableofcontents

\section{Introduction}

Single balance laws in one space dimension mostly present smooth f{}luxes, although the case of piecewise smooth f{}luxes is of interest both for the mathematics and for applications.
Source terms instead are naturally rough, and singularities of different nature have a physical and geometrical meaning.
As well, they might indeed make a difference among the Eulerian and Lagrangian description of the phenomenon which is being modeled, for the mathematics.

We are concerned in this paper with different notions of \emph{continuous} solutions of the PDE
\begin{equation}
\label{E:basicPDE}
\pt u(t,x) + \px (f(u(t,x))) =  \gsimple(t,x)
\qquad f\in C^{2}(\R)
\end{equation}
for a bounded source term $ \gsimple$.
{An essential feature of conservation laws is that solutions to the Cauchy problem do develop shocks in finite time. Nevertheless, the source might act as a control device for preventing this shock formation: exploiting the geometric interplay and correspondence with intrinsic Lipschitz surfaces in the Heisenberg group, \cite{FSSC,BCSC} show, for the quadratic flux, that the Cauchy problem admits continuous solutions for any H\"older continuous initial datum, if one chooses accordingly a bounded source term.
This framework of continuous solutions, with more regularity assumptions on the source term, was already considered in~\cite{Daf} as the natural class of solutions to certain interesting dispersive partial differential equations that can be recast as balance laws.
We believe that our study is relevant also in order to point out that, even in the analysis of a single equation in one space dimension, the mathematical difficulties do not only arise by the presence of shocks: also the study of continuous solutions has important delicate points which are not technicalities. This fails the expectation that the study of continuous solutions should be easy, and equation~\eqref{E:basicPDE} is a toy-model for more complex situations.
}

One can adopt the Eulerian viewpoint or the Lagrangian/Broad viewpoint: roughly, the first interprets the equation in a distributional sense while the second consists in an infinite dimensional system of ODEs along characteristics. We compare here the equivalence among the formulations when $u$ is assumed to be continuous, but no more. We remark that even with the quadratic f{}lux \[f(z)=z^{2}/2\] in general $u$ is not more than H\"older continuous, see~\cite{Kirch}, so that a finer analysis is needed.
Continuous solutions are regularized to locally Lipschitz on the open set $\{f'(u)f''(u)\neq0\}$, exploiting the results of this paper, for time-dependent solutions when the source term $\gsimple$ is autonomous~\cite{autonABC}, but not in general.
Examples of stationary solutions which are neither absolutely continuous nor of bounded variation are trivially given by continuous functions $u(x)$ for which $f(u(x))$ has bounded derivative.
Correspondences among different formulations are already done at different levels in~\cite{Daf,Vitt,BSC,BCSC, Pinamonti} for the special, but relevant, case of the quadratic f{}lux. We extend the analysis with new tools.
The issue is delicate because $ \gsimple$ in this setting lacks even of continuity in the $x$ variable, and characteristic curves need not be unique because $u$ lacks of smoothness.
As a consequence, the source terms for the two descriptions lie in different spaces:
\begin{itemize}
\item
In the Eulerian point of view $\gsimple$ is identified only as a distribution in the $(t,x)$-space. 
\item In the Lagrangian/Broad viewpoint it is the restriction of $\gsimple$ on any characteristic curve which must identify uniquely a distribution in the $t$-space---or for a weaker notion only on a chosen family of characteristics that we call Lagrangian parameterization. 
\end{itemize}

The aim of this paper is to consider and to discuss when Eulerian, Broad and Lagrangian solution of~\eqref{E:basicPDE} that we just mentioned are equivalent notions, without addressing what is the correspondence among the suitable source terms---if any.
The correspondence of the source terms, source terms which belong to different functional spaces, is the subject of the companion paper~\cite{file2ABC}, including counterexamples which show that the formulations are not always equivalent.

{We conclude mentioning that Broad solutions were introduced in~\cite{RY} as generalizations of classical solutions alternative to the distributional (Eulerian) ones, and presented e.g.~also in~\cite{Bre}. They were successfully studied and applied in different situations where characteristic curves are unique; the analysis in situations when characteristics do merge and split however was only associated to the presence of shocks, and a different analysis related to multivalued solutions was performed. They were then considered for the quadratic flux by F.~Bigolin and F.~Serra Cassano for their interest related to intrinsic regular and intrinsic Lipschitz surfaces in the Heisenberg group. Our notions of Lagrangian and Broad solutions collapse and substantially coincide with the ones in the literature when the settings overlap. They are otherwise a nontrivial extension of those concepts, and most of the issues in the analysis arise because of our different setting.}

\subsection{Definitions and Setting}
As we are in a non-standard setting, we explain extensively the different notions of solutions and we specify the notation we adopt.
Even if this is an heavy block, detailed definitions improve the later analysis.
They will be also collected in the Nomenclature at the end for an easy consultation.

\nomenclature{$\Ll^{1}$, $\Ll^{2}$}{1- or 2-dimensional Lebesgue measure}
\nomenclature{$f$}{Flux function for the balance law~\eqref{E:basicPDE}}
\nomenclature{$u$}{Continuous solution, Noation~\ref{N:basic}}
\nomenclature{$\lambda$}{The composite function $f'\circ u$, Notation~\ref{N:basic}}

\begin{notation}
\label{N:basic}
We can assume below that $u\in C_{\rc}(\R^{+}\times\R)$, because our considerations are local in space-time.
We adopt the short notation $\lambda(t,x)=f'(u(t,x))$ for the charactersitic speed.
\end{notation}

\begin{notation}
\label{N:restrictions}
Given a function of two variables $\varphi(t,x)$, one denotes the restrictions to coordinate sections as
\[
\varphi_{x}^{\reu}(t):\ t\mapsto \varphi(t,x) 
\qquad
\varphi_{t}^{\red}(x):\ x\mapsto \varphi(t,x)
.
\]
\end{notation}
\nomenclature{$\varphi_{x}^{\reu}(t)$}{Restriction of a function $\varphi(t,x)$ to the first coordinate, Notation~\ref{N:restrictions}}
\nomenclature{$\varphi_{t}^{\red}(x)$}{Restriction of a function $\varphi(t,x)$ to the second coordinate, Notation~\ref{N:restrictions}}

\begin{notation}
\label{N:derivatives}
Given a function of locally bounded variation $\varphi(t,x)$, one denotes by
\[
\Dt \varphi(dt,dx)\qquad
\Dx\varphi(dt,dx)
\]
the measures of its partial derivatives. When it is not known if they are measures, we rather denote the distributional partial derivatives by
\[
\pt \varphi(t,x) ,
\qquad
\px\varphi(t,x).
\]  
Classical partial derivatives are often denoted by
\[
\pat{\varphi(t,x)} ,
\qquad
\pax{\varphi(t,x)}.
\]
\end{notation}
\nomenclature{$\Dt, \Dx$}{Partial derivatives of a function of bounded variation, Notation~\ref{N:derivatives}}
\nomenclature{$\pt,\px$}{Distributional partial derivatives, Notation~\ref{N:derivatives}}
\nomenclature{$\pat{},\pax{}$}{Classical partial derivatives, Notation~\ref{N:derivatives}}

\begin{definition}[Characteristic Curves]
\label{D:charactcurves}
\emph{Characteristic curves} of $u$ are absolutely continuous functions $\gamma:\R^{+}\to\R$, or equivalently the corresponding curves $i_{\gamma}:=(\Id,\gamma)$, defined on a connected open subset of $\R$ and satisfying the ordinary differential equation
\[
\dot \gamma(s) = \lambda(s,\gamma(s))=\lambda(i_{\gamma}(s)).
\]
The continuity of $u$ implies that $\gamma$ is continuously differentiable.
\end{definition}
\nomenclature{$\gamma, i_{\gamma}$}{Characteristic curve, Definition~\ref{D:charactcurves}}
Notice that $i_{\gamma}$ is an integral curve of the vector field $(1,\lambda)$.

\begin{figure}[ht]\centering
\includegraphics[width=.5\linewidth]{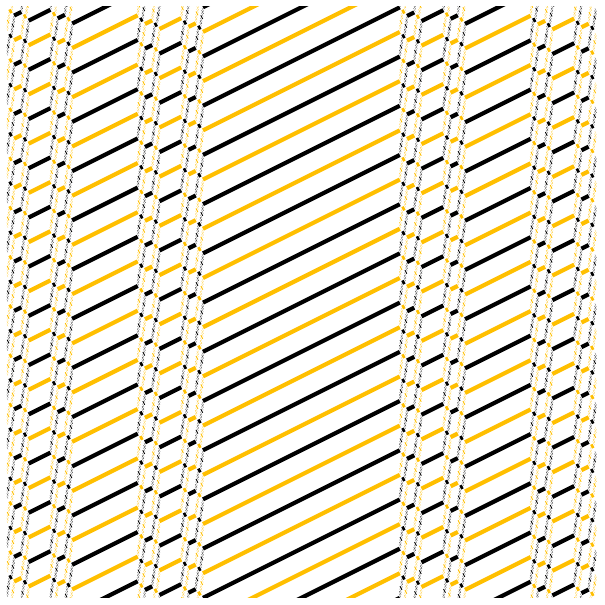}
\caption{Curves satisfying $\dot \gamma(s)=1$ almost everywhere may fail to be Lipschitz continuous, this is why characteristic curves are required to be absolutely continuous. Being automatically $C^{1}$-functions, they are then stable under uniform convergence.}
\label{fig:charactNonLip}
\end{figure}

\begin{definition}[Lagrangian Parameterization]
\label{D:LagrangianParameterization}
We call \emph{Lagrangian parameterization} associated with $u$ a surjective continuous function $\chi:\R^{+}\times\R\to\R$, or equivalently $\chi:\R\to C(\R^{+})$, such that\footnote{There is no reason for asking the following condition only for $\Ll^{1}$-a.e.~$y$: if it holds for $\Ll^{1}$-a.e.~$y$ then it holds naturally for every $y$. As well, it would be odd requiring the second condition only $\Ll^{1}$-a.e.~$t$.}
\begin{itemize}
\item[-] for each $y\in\R$, the curve $\chi(y)$ defined by $t\mapsto\chi(t,y)=\chi_{y}^{\reu}(t)$ is a characteristic curve:
\[
\dot \chi_{y}^{\reu}(t)=\pt\chi(t,y)=f'(u(t,\chi(t,y)))=\lambda(t,\chi(t,y))=\lambda(i_{\chi(y)}(t));
\]
\item[-] for each $t\in\R^{+}$, $y\mapsto\chi(t,y)=\chi^{\red}_{t}(y)$ is nondecreasing.
\end{itemize}
\end{definition}
\nomenclature{$\chi$}{Lagrangian parameterization for a continuous solution $u$ to \eqref{E:basicPDE}, Defintion~\ref{D:LagrangianParameterization}}


\begin{definition}\label{D:LagrangianParameterizationabsolutelycontinuous}
We call a Lagrangian parameterization $\chi$ \emph{absolutely continuous} if $(i_{\chi}^{-1})_{\sharp}\Ll^{2}\ll \Ll^{2}$.
Equivalently, $\Ll^{2}$-positive measure sets can not be negligible along the characteristics of the parameterization $\chi$: $\chi$ maps negligible sets into negligible sets.
\end{definition}

\begin{remark}
Even if $\chi$ generally is not injective, $(i_{\chi}^{-1})_{\sharp}\Ll^{2}$ is as well a well defined Borel measure meaning that for all compact subsets $K\subset \R^{+}\times\R$ one defines $(i_{\chi}^{-1})_{\sharp}\Ll^{2}:=\Ll^{2}(i_{\chi}(K))$. Of course $\left((i_{\chi}^{-1})_{\sharp}\Ll^{2}\right)(\emptyset)=0$ but one has also that if $A$ and $B$ are disjoint compact subsets of the plane then for all $t$ the intersection of $\chi\left(A\cap\{t\}\right)$ and $\chi \left(B\cap\{t\}\right)$ is at most countable, due to the monotonicity of $\chi^{\red}_{t} $:
\[
\Ll^{1}\Big( \chi\left(A\cap\{t\}\right)\bigcap \chi \left(B\cap\{t\}\right)\Big)=0\quad \forall t
\qquad
\Rightarrow
\qquad
\Ll^{2}\left(i_{\chi}(A)\cap i_{\chi}(B)\right)=0
\]
In particular
\[
\Ll^{2}\left(i_{\chi}(A)\right)+\Ll^{2}\left( i_{\chi}(B)\right)= \Ll^{2}\left(i_{\chi}(A)\cup i_{\chi}(B)\right)=\Ll^{2}\left(i_{\chi}(A\cup B)\right). 
\]
This implies that $(i_{\chi}^{-1})_{\sharp}\Ll^{2}$ is countably-additive, and thus a measure. This justifies our notation.
\end{remark}

\begin{notation}
\label{N:variousnotations}
We fix the following nomenclature, that we extend at the end of the paper.

\newcommand{\secondcolumn}{.8\textwidth}

\begin{tabbing}
\hspace*{0.5cm}\=$rrrrrrrrr$ \=\kill
 \> $X, k$
 \> Usually: $X$ a subset $ \R^{+}\times\R$, $k\in \N\cup\{\infty\}$
 \\
 \> $\Omega$
 \> Open subset of $\R^{+}\times\R$. Usually it can also supposed to be bounded.
 \\   \> $C(X)$
 \> \parbox{\secondcolumn}{Continuous functions on $X$}
 \\   \> $C_{\mathrm b}(X)$
 \> \parbox{\secondcolumn}{Bounded continuous functions on $X$}
 \\   \> $C^{1/\alpha}(X)$
 \> \parbox{\secondcolumn}{$1/\alpha$-H\"older continuous functions on $X$, where $0<1/\alpha\leq1$}
 \\    \> $C^{k}_{(\rc)}(\Omega)$
 \> \parbox{\secondcolumn}{$k$-times continuously differentiable (compactly supported) functions on $\Omega$}
 \\   \> $C^{k,1/\alpha}(\Omega)$
 \> \parbox{\secondcolumn}{$k$-times continuously differentiable (compactly supported) functions on $\Omega$ with}
  \\   \> {}
 \> \parbox{\secondcolumn}{$k$-th derivative which is $1/\alpha$-H\"older continuous in $\Omega$, where $0<1/\alpha\leq1$}
 \\   \> $\pw(X)$
 \> \parbox{\secondcolumn}{Borel bounded functions $\gfr$ on $X$}
 \\   \> $\pwu(X)$
 \> \parbox{\secondcolumn}{Equivalence classes $\gu$ made of those bounded Borel functions which coincide}
\\   \> {}
 \> \parbox{\secondcolumn}{ $\Ll^{1}$-a.e.~when restricted to \emph{any} characteristic curve of $u$}
 \\   \> $\pwx(X)$
 \> \parbox{\secondcolumn}{Equivalence classes $\gx$ made of those bounded Borel functions which coincide}
  \\   \> {}
 \> \parbox{\secondcolumn}{$\Ll^{1}$-a.e.~when restricted to $\{i_{\chi(y)}(t)\}_{t>0}$, \emph{for every} $y\in\R$ and for a fixed}
  \\   \> {}
 \> \parbox{\secondcolumn}{Lagrangian parameterization $\chi$}
 \\  
  \> $\distrBdd(X)$
 \> \parbox{\secondcolumn}{Equivalence classes $\gd$ of Borel bounded functions which coincide $\Ll^{2}$-a.e.}
 \\   \> $\D(\Omega)$
 \> \parbox{\secondcolumn}{Distributions on $\Omega$}
 \\   \> $\mathcal M(X)$
 \> \parbox{\secondcolumn}{Radon measures on $X$}
\end{tabbing}
\nomenclature{$X$}{Subset of $\R^{+}\times\R$, usually Borel.}
\nomenclature{$\Omega$}{Open subset of $\R^{+}\times\R$, if needed connected}
\nomenclature{$C(\Omega)$}{Continuous functions on $\Omega$, see also $C_{b}, C^{k}_{}, C^{k}_{\rc}, C^{k, 1/\alpha}$ in Notation~\ref{N:variousnotations}}
\nomenclature{$\distrBdd(X)$}{Bounded functions on $X$ identified $\Ll^{2}$-a.e., Notation~\ref{N:variousnotations}}
\nomenclature{$\D(\Omega)$}{Distributions on $\Omega$, Notation~\ref{N:variousnotations}}
\nomenclature{$\mathcal M(X)$}{Radon measures on $X$, Notation~\ref{N:variousnotations}}
\nomenclature{$\pw(X)$}{Functions defined pointwise on $X$, Notation~\ref{N:variousnotations}}
\nomenclature{$\pwu(X)$}{Functions coinciding $\Ll^{1}$-a.e.~on characteristics of $u$, Notation~\ref{N:variousnotations}}
\nomenclature{$\pwx(X)$}{Functions coinciding $\Ll^{1}$-a.e.~on the Lagrangian parameterization $\chi$, Notation~\ref{N:variousnotations}}
\nomenclature{$\gsimple,\gd$}{Distributional, bounded source term for the balance law~\eqref{E:basicPDE}}
\nomenclature{$\gfr$, $\gu$, $\gx$}{Functions beloning to $\pw(X)$, $\pwu(X)$, $\pwx(X)$ respectively, Notation~\ref{N:variousnotations}}
\nomenclature{$[\cdot]_{\lambda}$, $[\cdot]_{\chi}$, $[\cdot]$}{Projections on, $\pwu(X)$, $\pwx(X)$, $\distrBdd(X)$ respectively, Notation~\ref{notation:projection}}

\end{notation}

\begin{notation}
\label{notation:projection}
Notice there are the following natural correspondences
\begin{align*}
&\pw(X)&\xrightarrow{[\cdot]_{\lambda}}&&\pwu(X)&&\xrightarrow{[\cdot]_{\chi}}&&\pwx(X) \\
&\gfr &\mapsto&&\gu=[\gfr]_{\lambda}&&\mapsto&&\gx=[\gu]_{\chi}=[\gfr]_{\chi}.
\end{align*}
and moreover
\begin{align*}
&\pw(X)&&\xrightarrow{[\cdot]}&&\distrBdd(X)\\
&\gfr &&\mapsto&&\gd =[\gfr].
\end{align*}
The same brackets denote also correspondences from any of the bigger spaces: brackets identify the target spaces.
The correspondences among $\pwx(X),\pwu(X)$ and $\distrBdd(X)$ do not exist in general.
Trivially, sets which are $\Ll^{2}$-negligible generally are not $\Ll^{1}$-negligible along any characteristic curve $\gamma$ of~\eqref{E:basicPDE}.
Moreover, there exists a subset of the plane which has positive Lebesgue measure but which intersects each characteristic curve of a Lagrangian parameterization in a single point. See \cite[\S~4.1-2]{file2ABC}. A correspondence exists with \emph{absolute continuity}.
\begin{lemma}
\label{L:inclusioneinversa}
If a Lagrangian parameterization $\chi$ is absolutely continuous, for every 
Borel functions $\gfr_{1},\gfr_{2}\in\pw(X)$ such that $[\gfr_{1}]_{\chi}=[\gfr_{2}]_{\chi}$ one has that $[\gfr_{1}]=[\gfr_{2}]\in L^{\infty}(X)$.
\end{lemma}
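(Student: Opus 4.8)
The plan is to rephrase the statement as a claim about the two-dimensional Lebesgue measure of the set where $\gfr_{1}$ and $\gfr_{2}$ disagree, and then to transport this set back and forth through the parameterization map $i_{\chi}=(\Id,\chi)$. Set $N:=\{(t,x)\in X:\ \gfr_{1}(t,x)\neq\gfr_{2}(t,x)\}$, which is a Borel subset of $X$ since $\gfr_{1},\gfr_{2}$ are Borel. By the definition of $[\cdot]\colon\pw(X)\to L^{\infty}(X)$, the conclusion $[\gfr_{1}]=[\gfr_{2}]$ is exactly the assertion $\Ll^{2}(N)=0$, so the whole proof reduces to estimating $\Ll^{2}(N)$.

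First I would pull $N$ back to the $(t,y)$-plane. Because $\chi$ is continuous, the map $i_{\chi}\colon\R^{+}\times\R\to\R^{+}\times\R$, $(t,y)\mapsto(t,\chi(t,y))$, is continuous, so $M:=i_{\chi}^{-1}(N)$ is Borel. Its $y$-section is $M_{y}=\{t>0:\ (t,\chi(t,y))\in N\}=\{t>0:\ \gfr_{1}(i_{\chi(y)}(t))\neq\gfr_{2}(i_{\chi(y)}(t))\}$, precisely the set of times at which $\gfr_{1}$ and $\gfr_{2}$ differ along the characteristic $i_{\chi(y)}$. The hypothesis $[\gfr_{1}]_{\chi}=[\gfr_{2}]_{\chi}$ is, by the very definition of $\pwx(X)$, the statement that $\Ll^{1}(M_{y})=0$ for \emph{every} $y\in\R$. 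By Tonelli's theorem, $\Ll^{2}(M)=\int_{\R}\Ll^{1}(M_{y})\,dy=0$.

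It then remains to transfer the nullity of $M$ back to $N$, and this is where both surjectivity and absolute continuity enter. Since $\chi$ is a Lagrangian parameterization, each time-slice is covered by its characteristics, so $i_{\chi}$ is onto $X$: given $(t,x)\in N$, choosing $y$ with $\chi(t,y)=x$ gives $(t,y)\in M$ and $(t,x)=i_{\chi}(t,y)$, whence $N\subseteq i_{\chi}(M)$. Absolute continuity of $\chi$, i.e.\ $(i_{\chi}^{-1})_{\sharp}\Ll^{2}\ll\Ll^{2}$, says precisely that $i_{\chi}$ sends $\Ll^{2}$-negligible sets to $\Ll^{2}$-negligible sets; applied to the null Borel set $M$ it gives $\Ll^{2}(i_{\chi}(M))=0$, hence $\Ll^{2}(N)\le\Ll^{2}(i_{\chi}(M))=0$, and the lemma follows.

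I expect the main obstacle to be the measure-theoretic bookkeeping in this last step rather than the structure of the argument. The image $i_{\chi}(M)$ of a Borel set under the continuous, generally non-injective map $i_{\chi}$ is only analytic, hence a priori merely Lebesgue (universally) measurable, and the pushforward $(i_{\chi}^{-1})_{\sharp}\Ll^{2}$ was defined only on compact sets. To make ``$i_{\chi}$ maps negligible sets to negligible sets'' rigorous one localizes to bounded pieces, where the pushforward is finite, encloses $M$ in open sets of small $\Ll^{2}$-measure by outer regularity, and uses that open subsets of the plane are $\sigma$-compact so that the compact-set definition of the pushforward propagates to them via continuity from below; the $\varepsilon$--$\delta$ form of absolute continuity for the resulting locally finite measure then forces $\Ll^{2}(i_{\chi}(M))=0$. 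The surjectivity of $i_{\chi}$ onto each time-slice, used to obtain $N\subseteq i_{\chi}(M)$, is the only other point to pin down, and it is part of what it means for $\chi$ to be a Lagrangian parameterization.
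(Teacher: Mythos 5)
Your proof is correct and follows essentially the route the paper intends: the paper's own proof is just the one-line assertion that the lemma is ``an algebraic application of the definitions'', and your argument---pulling the disagreement set back through $i_{\chi}$, using the definition of $\pwx$ together with Tonelli to see it is $\Ll^{2}$-null in the $(t,y)$-plane, then pushing forward via surjectivity and the Luzin-(N) reading of absolute continuity already stated in the paper's remark after Definition~\ref{D:LagrangianParameterizationabsolutelycontinuous}---is precisely the natural unpacking of that one-liner. The extra measure-theoretic care you take (analyticity of $i_{\chi}(M)$, extending the compact-set definition of $(i_{\chi}^{-1})_{\sharp}\Ll^{2}$ to open sets by $\sigma$-compactness) goes beyond what the paper records but is consistent with it.
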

\begin{proof}
It is an algebraic application of the definitions of the spaces in Notation~\ref{N:variousnotations}.
\end{proof}

\end{notation}

\begin{definition}
Let $\gd\in\distrBdd(\Omega)$.
If $u\in C(\Omega)$ satisfies
\[
\forall \varphi\in C^{\infty}_{\rc}(\Omega)
\qquad
\iint_{\Omega} \left\{\varphi_{t} \,u + \varphi_{x} \,f(u) \right\}= \iint_{\Omega} \varphi\, \gd
\]
we say that $u$ is a \emph{continuous distributional (or Eulerian)} solution of~\eqref{E:basicPDE}.
\end{definition} 

\begin{definition}[Lagrangian solution]
\label{D:lagrSol}
A function $u\in C(\Omega)$ is called \emph{continuos Lagrangian solution} of~\eqref{E:basicPDE} with Lagrangian parameterization $\chi$, associated with $u$, and Lagrangian source term $\gx\in\pwx(\Omega)$ if
\[
\text{for all $y\in \R$}\qquad
\ddt u(t,\chi(t,y) ) = \gx(t, \chi(t,y))
\qquad\text{in $\D\left(i_{\chi(y)}^{-1}(\Omega)\right)$}.
\]
\end{definition}

\begin{definition}[Broad solution]
Let $u\in C(\Omega)$ and $\gu\in\pwu(\Omega)$. The function $u$ is called \emph{continuous broad solution} of~\eqref{E:basicPDE} if it satisfies
\[
\text{for all characteristic curves $\gamma$ of $u$}\qquad
\ddt u(t,\gamma(t) ) = \gu(t, \gamma(t))
\qquad\text{in $\D\left(i_{\gamma}^{-1}(\Omega)\right)$.}
\]
\end{definition}

\begin{definition}
A continuous function $u$ is both a distributional/Lagrangian/broad solution of~\eqref{E:basicPDE} when the source terms are \emph{compatible}: if there exists a Borel function $\gfr$ such that
\[
\gd=[\gfr],\  \gx=[\gfr]_{\chi}, \ \gu = [\gfr]_{\lambda}.
\] 
\end{definition}

\begin{definition}
\label{D:inf{}lf}
We define
$z^{*}\in\R$ \emph{inf{}lection point} of a function $f\in C^{2}(\R)$ if $f''(z^{*})=0$ but it is neither a local maximum nor a local minimum for $ f(z)-f'(z^{*})(z-z^{*})$.
We denote by $\infl(f)$ the set of inf{}lection points of $f$, $\clos({\infl(f)})$ is its closure.
\end{definition}

In principle, $u$ could be a distributional solution of~\eqref{E:basicPDE} with source $\gd$ and a Lagrangian solution with source $\gx$ with $\gd$ and $\gx$ which do not correspond to a same function $\gfr\in\pw(\R^{+}\times\R)$: in this case, we would not say that $u$ is both a distributional and Lagrangian solution to the \emph{same} equation, because source terms are different. We discuss the issue in \cite{file2ABC}, where we prove that if the inf{}lection points of $f$ are negligible then whenever a same function is a Lagrangian solution and it is a distributional solution then the source terms must be compatible. 

\subsection{Overview of the results}

Now that definitions are clear, we describe our results:
\begin{itemize}
\item[\S~\ref{Ss:auxiliaryIntro}:] we collect observations on Lagrangian parameterization and Lagrangian/Broad solution;
\item[\S~\ref{Ss:mainIntro}:] we summarize relations among the different notions of solutions of~\eqref{E:basicPDE}.
\end{itemize}
Notice first that both the definitions above and the statements below are local in space-time, as well as the compatibility of the sources that will be discussed in~\cite{file2ABC}. This motivates the assumption that $u$ is compactly supported, that we fixed in Notation~\ref{N:basic}. 

\subsubsection{Auxiliary observations}
\label{Ss:auxiliaryIntro}
We begin collecting elementary observations on the basic concept of Lagrangian parameterization and Lagrangian/Broad solution, mostly for consistency.

\begin{lemma}
\label{L:Elagrpar}
There exists a Lagrangian parameterization $\chi$ associated with any $u\in C_{b}(\Omega)$.
In particular, one has the implication
\[
\text{continuous broad solution}
\quad
\Rightarrow
\quad
\text{continuous Lagrangian solution, with $\gx=[\gu]_{\chi}$.}
\]
The converse implication holds under a condition on the inf{}lection points of $f$, not in general.
\end{lemma}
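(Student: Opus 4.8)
The plan is to construct $\chi$ by regularisation, and then to read off the forward implication almost immediately, isolating the converse as the genuinely delicate point. By Notation~\ref{N:basic} I may assume $u\in C_{\rc}(\R^{+}\times\R)$, so that $\lambda=f'\circ u$ is bounded and continuous with $L:=\sup|\lambda|<\infty$. I first mollify, $\lambda_{\varepsilon}:=\lambda*\rho_{\varepsilon}$, obtaining smooth fields with $\lambda_{\varepsilon}\to\lambda$ locally uniformly and $\sup|\lambda_{\varepsilon}|\le L$. For the Lipschitz vector field $(1,\lambda_{\varepsilon})$ the Cauchy problem is uniquely solvable, so fixing a reference time $\bar t$ I let $\chi_{\varepsilon}(t,y)$ solve $\dot\gamma=\lambda_{\varepsilon}(t,\gamma)$ with $\chi_{\varepsilon}(\bar t,y)=y$. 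Uniqueness makes $y\mapsto\chi_{\varepsilon}(t,y)$ an increasing homeomorphism of $\R$ for each $t$, the curves are equi-Lipschitz in $t$ with constant $L$, and $|\chi_{\varepsilon}(t,y)-y|\le L|t-\bar t|$. Combining Arzel\`a--Ascoli in $t$ with a Helly/diagonal selection over a countable dense set of labels, I extract $\chi_{\varepsilon_{k}}\to\chi$, locally uniformly in $t$ and pointwise in $y$. Passing to the limit in $\chi_{\varepsilon_{k}}(t,y)=y+\int_{\bar t}^{t}\lambda_{\varepsilon_{k}}(s,\chi_{\varepsilon_{k}}(s,y))\,ds$, the uniform convergence of both integrand factors shows that each $t\mapsto\chi(t,y)$ solves $\dot\gamma=\lambda(t,\gamma)$, hence is a characteristic curve in the sense of Definition~\ref{D:charactcurves} (stable under uniform convergence, as recorded there), while monotonicity of $\chi^{\red}_{t}$ survives as the pointwise limit of nondecreasing maps.

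\emph{Main obstacle: continuity and surjectivity.} The serious point is that $\chi$ must be a \emph{continuous} and \emph{surjective} map, and this can fail for the naive limit when characteristics are not unique. Where a whole fan of characteristics emanates from a single point $(\bar t,y_{0})$, the selected curves $\chi(\cdot,y)$ track only the extreme branches, so $y\mapsto\chi(t,y)$ develops a jump for $t>\bar t$ and the fan interior is missed. I would resolve this not by normalising $\chi$ to the identity at $\bar t$, but by reparameterising the label line so that each fan receives a whole interval of labels: on that interval one lets $\chi(\cdot,y)$ interpolate continuously across the fan, while $\chi^{\red}_{\bar t}$ is merely nondecreasing (constant on the interval), which is admissible since Definition~\ref{D:LagrangianParameterization} only requires $\chi^{\red}_{t}$ to be nondecreasing, not injective; dually, merging of characteristics simply makes $\chi^{\red}_{t}$ locally constant. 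Carrying out this bookkeeping, so that the resulting $\chi$ is jointly continuous and onto $\R$ while every slice stays a characteristic, is where essentially all the work lies, and this is the step I expect to be the main difficulty.

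\emph{Broad $\Rightarrow$ Lagrangian.} With $\chi$ in hand the implication is almost formal. Let $u$ be a continuous broad solution with source $\gu\in\pwu(\Omega)$ and put $\gx:=[\gu]_{\chi}\in\pwx(\Omega)$ via the projection of Notation~\ref{notation:projection}. For each fixed $y$ the curve $\chi(\cdot,y)$ is a characteristic curve of $u$, so the broad identity applies to $\gamma=\chi(\cdot,y)$ and gives $\ddt u(t,\chi(t,y))=\gu(t,\chi(t,y))$ in $\D(i_{\chi(y)}^{-1}(\Omega))$. By the very definition of $[\cdot]_{\chi}$ the representatives $\gu$ and $\gx$ agree $\Ll^{1}$-a.e.\ on $\{i_{\chi(y)}(t)\}_{t>0}$; since the right-hand side enters the distributional ODE only through its pairing with test functions in $t$, altering it on an $\Ll^{1}$-null set changes nothing. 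Hence $\ddt u(t,\chi(t,y))=\gx(t,\chi(t,y))$ for every $y$, which is exactly Definition~\ref{D:lagrSol} with parameterization $\chi$ and source $\gx=[\gu]_{\chi}$.

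\emph{On the converse.} A Lagrangian solution constrains $u$ only along the single monotone family $\{\chi(\cdot,y)\}_{y}$, whereas a broad solution must satisfy the ODE along \emph{every} characteristic; a characteristic lying outside the selected family (again, typically inside a fan) can carry incompatible behaviour of the source, so the implication does not reverse unconditionally. That it does hold once $\infl(f)$ is negligible---and fails otherwise---rests on showing that such a family then exhausts the characteristics up to sets invisible to the sources, together with an explicit counterexample; I would defer this to the dedicated analysis, it being precisely the content of~\cite{file2ABC}.
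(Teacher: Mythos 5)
Your forward implication (broad $\Rightarrow$ Lagrangian) is correct and coincides with the paper's one-line argument: once a Lagrangian parameterization $\chi$ exists, each $\chi(\cdot,y)$ is a characteristic, the broad identity applies to it, and $\gx=[\gu]_{\chi}$ changes nothing on an $\Ll^{1}$-null set of times. The deferral of the converse to the companion paper also matches. The problem is the existence of $\chi$ itself, which is the actual content of the lemma, and there your proposal has a genuine gap: you construct the mollified flows $\chi_{\varepsilon}$, pass to a limit, and then correctly observe that the limit fails to be continuous in $y$ and fails to be surjective wherever characteristics fan out --- and at that point you stop, declaring the repair (``reparameterise the label line so each fan receives an interval of labels and interpolate across the fan'') to be ``where essentially all the work lies.'' A proof that defers its only hard step is not a proof. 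Moreover, the repair as sketched does not work as stated: curves obtained by \emph{interpolating} between the two extreme branches of a fan are not characteristic curves, so filling the missing label interval requires producing genuine solutions of $\dot\gamma=\lambda(t,\gamma)$ through every point of the fan interior (Peano again) and ordering them consistently with everything already built; since fans can occur at all scales, accumulate, and nest, this ``bookkeeping'' is a nontrivial global construction, not an afterthought.

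The paper avoids the limit-of-flows route entirely and does precisely that missing construction directly (\S~\ref{Ss:lagrparam}): take a countable dense set of points, attach to each a Peano characteristic, force the family to be totally ordered by an explicit min/max surgery on the curves (which preserves the characteristic property, unlike interpolation), close it under uniform convergence --- equi-Lipschitz bounds make the closure consist of characteristics covering \emph{every} point --- and then label the ordered family by the continuous, strictly order-preserving map $\theta(\gamma)=\sum_{k}\gamma(q_{k})2^{-k}$, whose inverse yields a jointly continuous, monotone, surjective $\chi$. If you want to salvage your mollification approach you would have to graft essentially this argument onto the jump set of your limit, at which point the mollification step buys you nothing.
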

\begin{proof}
An explicit construction of a Lagrangian parameterization $\chi$ is part of \S~\ref{Ss:lagrparam}.
It relies on Peano's existence theorem for ODEs with continuous coefficients.
If $\gu\in\pwu(\Omega)$ is the Broad source, then $[\gu]_{\chi}$ is immediately the Lagrangian source.
The converse implication does not always hold, see~\cite[\S~4.3]{file2ABC} .
\end{proof}

\begin{lemma}
\label{L:lipcharisLagr}
Let $u\in C(\clos\Omega)$ and $G>0$.
Assume that through every point of a dense subset of $\Omega$ there exists a characteristic curve along which $u$ is $G$-Lipschitz continuous.
Then there exists a Lagrangian parameterization $\chi$ along whose characteristics $u$ is $G$-Lipschitz continuous.
\end{lemma}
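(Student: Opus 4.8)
The plan is to show that the class of characteristic curves along which $u$ is $G$-Lipschitz is simultaneously \emph{rich enough} — present through every point of $\Omega$, not merely a dense set — and \emph{stable enough} to be substituted for arbitrary characteristics throughout the construction of a Lagrangian parameterization used for Lemma~\ref{L:Elagrpar} in \S~\ref{Ss:lagrparam}. Throughout I read ``$u$ is $G$-Lipschitz along $\gamma$'' as the $G$-Lipschitz continuity of the scalar map $t\mapsto u(i_{\gamma}(t))=u(t,\gamma(t))$, and I use that every characteristic satisfies $|\dot\gamma|=|\lambda|\le\norm{f'\circ u}_{\infty}<\infty$, so that any family of characteristics is equi-Lipschitz and hence precompact in the uniform topology by Arzelà–Ascoli.

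First I would record two closure properties. \emph{(i) Stability under uniform limits:} if $\gamma_{n}\to\gamma$ uniformly and $u$ is $G$-Lipschitz along each $\gamma_{n}$, then $\gamma$ is again a characteristic — this is the stability under uniform convergence already noted in Definition~\ref{D:charactcurves} and Figure~\ref{fig:charactNonLip} — and, by the uniform continuity of $u$ on the compact set $\clos\Omega$, one passes to the limit in $|u(t,\gamma_{n}(t))-u(s,\gamma_{n}(s))|\le G|t-s|$ to obtain the same bound for $\gamma$. \emph{(ii) A lattice/gluing property:} if two characteristics $\gamma_{1},\gamma_{2}$ meet at a point $(t_{*},x_{*})$, then necessarily $\dot\gamma_{1}(t_{*})=\lambda(t_{*},x_{*})=\dot\gamma_{2}(t_{*})$, so $\max(\gamma_{1},\gamma_{2})$ and $\min(\gamma_{1},\gamma_{2})$ have no corner at crossing points; they are $C^{1}$, they solve the ODE, hence they are characteristics, and they are $G$-Lipschitz whenever both $\gamma_{1},\gamma_{2}$ are, since a continuous curve that is locally $G$-Lipschitz is globally $G$-Lipschitz.

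Using (i) I would upgrade the hypothesis: given any $q\in\Omega$, pick $p_{n}\to q$ in the dense set carrying $G$-Lipschitz characteristics $\gamma_{n}$ through $p_{n}$; by precompactness a subsequence converges uniformly to a characteristic through $q$, which is $G$-Lipschitz by (i). Thus a $G$-Lipschitz characteristic passes through \emph{every} point of $\Omega$. Next, anchoring at a fixed time slice $\{t=t_{0}\}$, for each $y$ let $\mathcal{C}_{y}$ be the set of $G$-Lipschitz characteristics with $\gamma(t_{0})=y$; it is nonempty, equi-Lipschitz and bounded, closed by (i) hence compact, and closed under $\max$ by (ii). Maximising over a countable dense subfamily through the increasing curves $\max(\gamma^{(1)},\dots,\gamma^{(n)})$ produces a pointwise-largest element $\bar\gamma_{y}=\max\mathcal{C}_{y}$, still $G$-Lipschitz. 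The gluing property (ii) together with maximality forces these selections to be ordered: if $y_{1}<y_{2}$ but $\bar\gamma_{y_{1}}$ rose strictly above $\bar\gamma_{y_{2}}$ past a first crossing time $\tau$, splicing $\bar\gamma_{y_{2}}$ before $\tau$ with $\bar\gamma_{y_{1}}$ afterwards would yield a $G$-Lipschitz characteristic in $\mathcal{C}_{y_{2}}$ exceeding $\bar\gamma_{y_{2}}$, contradicting maximality; the same splicing argument applies on the other side of $t_{0}$.

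With a monotone-in-$y$, everywhere-defined family of $G$-Lipschitz characteristics in hand, I would feed exactly this family into the construction of \S~\ref{Ss:lagrparam}: the monotonicity of $y\mapsto\chi(t,y)$ comes from the ordering just established, surjectivity from the anchoring at $\{t=t_{0}\}$, and every curve $\chi_{y}^{\reu}$ is $G$-Lipschitz by construction. The main obstacle is \emph{continuity} of $\chi$: the maximal selection $y\mapsto\bar\gamma_{y}$ may jump, so the gaps in the image must be filled by monotone (uniform) limits of the $\bar\gamma_{y}$. The crux is precisely that, by the closure property (i), these filling curves remain $G$-Lipschitz characteristics, so the completed family is a genuine Lagrangian parameterization all of whose characteristics are $G$-Lipschitz. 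Checking that these monotone limits fill the image continuously while staying inside the $G$-Lipschitz class is the delicate point, and the one I would spend the most care on.
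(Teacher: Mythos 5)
Your preliminary steps are sound and close in spirit to the paper's own toolkit: stability of $G$-Lipschitz characteristics under uniform limits, the min/max (lattice) property at crossings, and the Arzel\`a--Ascoli upgrade from a dense set of points to every point are all correct, and the ordering of the maximal selections is fine (indeed $\max(\bar\gamma_{y_1},\bar\gamma_{y_2})\in\mathcal C_{y_2}$ gives it in one line, without any splicing).

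The genuine gap is in your last step, and it is not merely a point requiring care: the jumps of $y\mapsto\bar\gamma_y$ \emph{cannot} be repaired by ``monotone (uniform) limits of the $\bar\gamma_y$''. At a jump $y^*$ such limits produce only the two extreme curves $\gamma^-:=\lim_{y\uparrow y^*}\bar\gamma_y$ and $\bar\gamma_{y^*}$, while the open region of the plane between them is swept by \emph{no} curve in the uniform closure of the maximal family; yet a Lagrangian parameterization must sweep it. Concretely, take $f(z)=z^{2}/2$ and the stationary continuous solution $u(t,x)=\sgn(x)\sqrt{|x|}$, which solves~\eqref{E:basicPDE} with $\gsimple=\sgn(x)/2$ and along every characteristic of which $u$ is $\tfrac12$-Lipschitz. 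Anchor at $t_{0}=0$. For $y\neq 0$ the characteristic through $(0,y)$ is unique, and one computes $\bar\gamma_{0}(t)=\max\{t,0\}^{2}/4$ while $\lim_{y\uparrow 0}\bar\gamma_{y}(t)=-\max\{t,0\}^{2}/4$: a jump at $y^{*}=0$. The wedge $\{(t,x):\ t>0,\ -t^{2}/4<x<t^{2}/4\}$ is covered only by the delayed-departure characteristics ($\gamma\equiv 0$ on $[0,c]$, then $\pm(t-c)^{2}/4$, with $c>0$), and none of these is a uniform limit of maximal curves, since every maximal curve has already entered its parabolic phase by time $0$. So the completed family you propose fails to cover the wedge, $\chi$ cannot be made continuous and surjective by any reparameterization of $y$, and filling the wedge requires inserting a continuum of new, mutually ordered, non-maximal curves---which is the original problem over again. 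The paper's proof is engineered precisely to avoid this dead end: it never takes extremal curves, but processes a countable dense set of \emph{points}, clipping each new curve by min/max against the previously chosen ones so that the countable family is totally ordered and passes through a dense set of points; only then does it take the uniform closure, which by density automatically covers every point, and finally the strictly order-preserving map $\theta(\gamma)=\sum_{k}2^{-k}\gamma(q_{k})$ converts the closed, ordered family into a continuous monotone parameterization with no gaps, concluding via Lemma~\ref{L:hogdipdatx}.
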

\begin{proof}
The proof is given in  \S~\ref{Ss:lagrparam}.
\end{proof}

\begin{lemma}
\label{L:hogdipdatx}
Let $u\in C(\clos(\Omega))$ and $G>0$. 
A sufficient condition for $u$ being a Lagrangian solution of~\eqref{E:basicPDE}
is the existence of a Lagrangian parameterization $\chi$ such that
\[
\text{for all $y$ the distribution $\ddt u(t,\chi(t,y) )$ is uniformly bounded in $\D\left(i_{\chi(y)}^{-1}(\Omega)\right)$.}
\]
\end{lemma}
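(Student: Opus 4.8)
The plan is to read the hypothesis as a uniform Lipschitz bound of $u$ along the characteristics of $\chi$, to extract the pointwise time-derivatives as an honest bounded function of $(t,y)$, and then to \emph{descend} this function to a bounded Borel function $\gx$ of $(t,x)$ whose restriction to each characteristic is the prescribed derivative. First I would unpack the assumption. For fixed $y$ the curve $\phi_y(t):=u(t,\chi(t,y))$ is continuous, and boundedness of $\ddt\phi_y$ in $\D(i_{\chi(y)}^{-1}(\Omega))$ means $|\langle \ddt\phi_y,\varphi\rangle|\le M\,\norm{\varphi}_{L^1}$ for test functions $\varphi$, with $M$ \emph{independent of $y$}. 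By duality with $L^1$ such a distribution is represented by a function $h_y\in L^\infty$ with $\norm{h_y}_\infty\le M$, and since $\phi_y$ is continuous with $L^\infty$ distributional derivative it is $M$-Lipschitz with $h_y=\phi_y'$ for $\Ll^1$-a.e.\ $t$. Because each $h_y$ is a genuine $L^\infty$ function, the distributional identity in Definition~\ref{D:lagrSol} reduces to the pointwise task of building one \emph{bounded Borel} function $\gx$ on $\Omega$ with $\gx(t,\chi(t,y))=h_y(t)$ for $\Ll^1$-a.e.\ $t$ and every $y$; the uniform bound $M$ is precisely what guarantees $\gx$ bounded, hence $[\gx]_\chi\in\pwx(\Omega)$.

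Next I would produce the candidate source. Set $U(t,y):=u(t,\chi(t,y))=\phi_y(t)$, which is jointly continuous; the pointwise time-derivative obtained as the limit of the difference quotients $n\,[U(t+\tfrac1n,y)-U(t,y)]$ is jointly Borel in $(t,y)$ and, for each $y$, equals $h_y$ $\Ll^1$-a.e. To convert the parameter $y$ into the spatial variable I would exploit the monotonicity of $\chi^{\red}_t$ from Definition~\ref{D:LagrangianParameterization}: for each $t$ the map $\chi(t,\cdot)$ is continuous and nondecreasing, so each level set $\{y:\chi(t,y)=x\}$ is a (possibly degenerate) interval. Fixing an enumeration $\Q=\{q_1,q_2,\dots\}$, I define a Borel selection $y_*(t,x)$ equal to $q_j$ with $j$ minimal such that $\chi(t,q_j)=x$ whenever that level set meets $\Q$, and equal to the unique preimage otherwise, and then set $\gx(t,x):=h_{y_*(t,x)}(t)$ (extended by $0$ off the image of $i_\chi$), a bounded Borel function. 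The reason for \emph{preferring rational parameters} is that every nondegenerate level set contains a rational, so at every point where characteristics merge the selected parameter lands in the countable set $\Q$.

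The decisive step, which I expect to be the main obstacle, is showing that this $\gx$ restricts correctly to \emph{every} characteristic, including those not picked out by $y_*$. The mechanism is a consistency lemma: if two characteristics agree on the set of times $A_{y_1,y_2}:=\{t:\chi(t,y_1)=\chi(t,y_2)\}$, then $\phi_{y_1}=\phi_{y_2}$ there (both equal the common value of $U$), so the Lipschitz function $\phi_{y_1}-\phi_{y_2}$ vanishes on $A_{y_1,y_2}$ and hence its a.e.\ derivative $h_{y_1}-h_{y_2}$ vanishes $\Ll^1$-a.e.\ on $A_{y_1,y_2}$. Now fix $y$. If the level set through $(t,\chi(t,y))$ is a single point, then $y_*=y$ and $\gx(t,\chi(t,y))=h_y(t)$ trivially. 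If it is nondegenerate, the selected $y_*(t,\chi(t,y))$ is some rational $q$ with $\chi(t,q)=\chi(t,y)$, i.e.\ $t\in A_{q,y}$, so $\gx(t,\chi(t,y))=h_q(t)$; applying the lemma to the countably many pairs $\{(q,y):q\in\Q\}$ produces a single $\Ll^1$-null set off which $h_q(t)=h_y(t)$ for \emph{all} $q$ with $\chi(t,q)=\chi(t,y)$. Hence $\gx(t,\chi(t,y))=h_y(t)$ for $\Ll^1$-a.e.\ $t$ and every $y$, which gives $\ddt u(t,\chi(t,y))=\gx(t,\chi(t,y))$ in $\D(i_{\chi(y)}^{-1}(\Omega))$ with source $[\gx]_\chi\in\pwx(\Omega)$, as required. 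The genuinely delicate points are exactly this measurable gluing across the merging set of characteristics—resolved by the rational-preferring selection together with the vanishing-derivative lemma—and the passage from the a.e.\ coincidence on a countable dense family of parameters to an arbitrary real parameter $y$ while keeping everything Borel and bounded by $M$.
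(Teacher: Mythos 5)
Your proof is correct, but it follows a genuinely different route from the paper's own argument in \S~\ref{Ss:lipalongchar}. Both proofs make the same initial reduction: the hypothesis forces each $\phi_y=u(\cdot,\chi(\cdot,y))$ to be uniformly Lipschitz, and the whole problem becomes showing that the a.e.\ derivative $(t,y)\mapsto\pt U(t,y)$ is essentially single-valued on the level sets of $\chi$, so that it descends to a bounded Borel function $\gx(t,x)$. The mechanisms for this single-valuedness differ. The paper's is geometric and quantitative: it considers the set~\eqref{E:lemmaaccessorio} of times at which the derivative along $\chi(\cdot,y)$ and the derivative along some other characteristic through the same point, lying on a fixed side, differ by at least $\varepsilon$ at scale $\sigma$, and shows that this set cannot contain two points closer than $\sigma$ --- because the two comparison curves would have to cross, and computing the increment of $U$ along the two resulting characteristic paths yields incompatible estimates; hence the bad set is countable. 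Your mechanism is purely measure-theoretic: when two curves of the parameterization occupy the same position, the compositions $\phi_{y_1},\phi_{y_2}$ agree on the whole coincidence set $A_{y_1,y_2}$, so the Lipschitz difference $\phi_{y_1}-\phi_{y_2}$ has vanishing derivative $\Ll^{1}$-a.e.\ there; monotonicity of $\chi^{\red}_{t}$ makes every nondegenerate level set an interval, hence it contains a rational, and your rational-preferring Borel selection glues the countably many a.e.\ identities into a single bounded Borel $\gx$. Each approach buys something. Yours is shorter, relies only on standard facts about Lipschitz functions, and makes the Borel measurability of the descended source explicit --- a point the paper passes over quickly when it simply posits $\Gfr\in\pw(\R^{+}\times\R)$ of the required form. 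The paper's argument proves more than the lemma strictly needs: consistency of the derivative along $\chi(\cdot,y)$ with the derivative along \emph{arbitrary} characteristics lying on one side of it, not only curves of the same parameterization; this stronger statement is of the same nature as the two-point contradiction used later for the universal broad source in \S~\ref{Sss:proofSel}. Your argument, by contrast, is intrinsically tied to a single monotone parameterization (you need level sets to be intervals in order to catch rationals), so it would not yield that stronger consistency, but for the lemma as stated it is a complete and more elementary proof.
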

\begin{proof}
The proof is given in \S~\ref{Ss:lipalongchar}.
\end{proof}

\nomenclature{$\infl(f)$}{Inf{}lection points of $f$, Definition~\ref{D:inf{}lf}}
\nomenclature{$\clos(\cdot)$}{Closure of a set}

\subsubsection{Main results}
\label{Ss:mainIntro}
In the present paper we do not discuss existence of continuous solutions of~\eqref{E:basicPDE}, but we assume that we are given a continuous function $u$: due to the lack of regularity, the focus of this paper is in which sense it can be a solution of the PDE~\eqref{E:basicPDE}.

We first state one of the important conditions: we denote by \eqref{hyp:H} the assumption 
\[
\tag{H}
\text{The set of inf{}lection points $\clos({\infl(f)})$ of Definition~\ref{D:inf{}lf} is $\Ll^{1}$-negligible.}
\label{hyp:H}
\]
We \emph{roughly} summarize our results with the following implications:

{\centering\begin{tabular}{ccccccccc}
Broad 
&& \begin{tabular}{c} $\Longrightarrow$ always, Th.~\ref{L:Elagrpar}\\ \hline $\Longleftarrow$ if \eqref{hyp:H} holds, \S~\ref{S:distributionaltoBroad}\end{tabular}
&& Lagrangian
&&${}^{\S~\ref{S:distrareLagrangian}}\Longleftrightarrow{}^{\S~\ref{S:lagraredistr}}$
&& distributional \\
\end{tabular}

}

The distinction among Lagrangian and distributional continuous solutions is motivated by the fact that the two formulations are different, and it is not that trivial proving their equivalence. Moreover, Lagrangian and distributional source terms do not correspond automatically, as we discuss in~\cite{file2ABC}. In particular, if we do not assume the negligibility of inf{}lection points we are not yet able to say that the Lagrangian and distributional source terms must be compatible.
If the f{}lux function is for example analytic, then our work gives instead a full analysis.

We collect also in the table below interesting properties of the solution. The properties depend on general assumptions on the smooth flux function $f$: 
\begin{enumerate}
\item whether $f$ satisfies a convexity assumption named in~\cite{file2ABC} $\alpha$-convexity, $\alpha>1$, which for $\alpha=2$ is the classical uniform convexity;
\item whether the closure of inflection points of $f$ is negligible, as defined in~\eqref{hyp:H} above.
\end{enumerate}

\vskip\baselineskip
\begin{tabular}{p{5.2cm}|c|c|c}
&$\alpha$-convexity & Negligible inf{}lections & General case\\
\hline
absolutely continuous La\-gran\-gian pa\-ra\-me\-terization & \ding{55} \cite[\S~4.1
]{file2ABC} & \ding{55} & \ding{55}\\
\hline
$u$ H\"older continuous & \ding{51} \cite[\S~2.1]{file2ABC}&\ding{55} \cite[\S~4.2
]{file2ABC}& \ding{55}\\
\hline
$u$ $\Ll^{2}$-a.e.~differentiable along characteristic curves &\ding{51}\cite[\S~2.2]{file2ABC}&\ding{55} \cite[\S~4.2
]{file2ABC}& \ding{55}\\
\hline
$u$ Lipschitz continuous along characteristic curves & \ding{51}& \ding{51} \ Theorem~\ref{T:sharpLipschitzreg}& \ding{55} \cite[\S~4.3
]{file2ABC}\\
\hline
entropy equality  & \ding{51} & \ding{51} & \ding{51}Lemma~\ref{L:nodissipation2}\\
\hline
compatibility of sources  & \ding{51} \ding{51} \cite[\S~2.2]{file2ABC}& \ding{51} \cite[\S~3]{file2ABC}& 
\end{tabular}
\vskip\baselineskip
{We show in Corollary~\ref{L:BVcontsol1} that if the continuous solution $u$ has bounded total variation then one can as well select a Lagrangian parameterization which is absolutely continuous, for $f\in C^{2}$.}

\section{Lagrangian solutions are distributional solutions}
\label{S:lagraredistr}

Consider a continuous Lagrangian solution $u(t,x)$ of~\eqref{E:basicPDE} in the sense of Definition~\ref{D:lagrSol}. Let $\chi$ be a Lagrangian parameterization, $\gx\in\pwx(\Omega)$ be its source term and set $G=\norm{\gx}_{\infty}$.
We want to show that there exists $\gd\in\distrBdd(\Omega)$ such that $u(t,x)$ is a distributional solution of
\begin{equation*}
\tag{\ref{E:basicPDE}}
\pt u(t,x) + \px (f(u(t,x))) =  \gd(t,x)
\qquad f\in C^{2}(\R),
\qquad 
|\gd(t,x)|\leq G\ .
\end{equation*}
We do not discuss at this stage the compatibility of the source terms $\gx$ and $\gd$.

\begin{notation}
\label{N:notationcompactsupp}
We already observed in the introduction that we are considering local statements. We directly assume therefore
\begin{itemize}
\item $\Omega=\R^{+}\times\R$,
\item $u$ compactly supported.
\end{itemize}
We set $\Lambda=\max \lambda = \max f'(u)$. We recall that we set $G=\norm{\gx}_{\infty}$.
\end{notation}

\subsection{The case of \texorpdfstring{$\BV$}{BV}-regularity}
\label{Ss:BVregolare}
In the present section we assume that $u$ is not only continuous but also that it has bounded variation.
Under this simplifying assumption, we prove in Lemma~\ref{L:BVcontsol2} below that $u$ is a distributional solution to~\eqref{E:basicPDE}, with the natural candidate for $\gd$.
The proof is based on explicit computations. Computations of this section exploit Vol'pert chain rule and the possibility to produce a change of variables which is absolutely continuous, as we state in Corollary~\ref{L:BVcontsol1} below.
It follows by the following more general lemma.

\begin{lemma}
\label{L:BVcontsol1astratto}
Consider a function $w:\R^{+}\times\R\to\R$ such that 
\begin{itemize}
\item the restriction $w_{\ytau}^{\reu}$ belongs to $C^{1,1}(\R^{+})$ for all $y\in\R$ and
\item the second mixed derivative $\partial_{t\ytau}w$ is a Radon measure.
\end{itemize}
Then, up to reparameterizing the $\ytau$-variable, there exists $0\leq H\in\distrBdd_{\loc}(\R^{+}\times\R)$ such that
\begin{align*}
&\Dytau w=H\Ll^{2},  &&\Dt H= \Dytau \left(\pat{ w}\right) \in \mathcal M(\R^{+}\times\R).
\end{align*}
 \end{lemma}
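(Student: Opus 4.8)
The plan is to recognise that the target identity $\Dt H=\Dytau(\pat w)$ is, at the formal level, nothing but the commutation $\pt\pytau w=\pytau\pt w$; the substance of the statement is therefore (i) that $\Dytau w$ is an honest nonnegative Radon measure rather than a bare distribution, and (ii) that a monotone relabelling of the parameter $\ytau$ turns it into $H\,\Ll^{2}$ with $H$ bounded. Write $v:=\pat w$ for the classical velocity, which exists and is Lipschitz in $t$ for each fixed $\ytau$ by the $C^{1,1}$–hypothesis, and $\mu:=\partial_{t\ytau}w\in\mcM(\R^{+}\times\R)$ for the mixed derivative. I would use throughout that the slices $\ytau\mapsto w(t,\ytau)$ are nondecreasing — as they are for the Lagrangian parameterisation $w=\chi$ to which the lemma is applied — since this is exactly what yields $H\ge0$.

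First I would show that $\Dytau w$ is a measure and identify its time–slices. Testing against product functions $\alpha(t)\beta(\ytau)$, the map $t\mapsto -\int w(t,\ytau)\beta'(\ytau)\,d\ytau$ is $C^{1}$ with derivative $-\int v(t,\ytau)\beta'(\ytau)\,d\ytau$; this simultaneously disintegrates $\Dytau w$ as $\int_{\R^{+}}m_{t}\,dt$ with $m_{t}:=\Dytau w(t,\cdot)$, shows each $m_{t}$ is a nonnegative measure of mass $w(t,b)-w(t,a)$ on $[a,b]$ (hence locally bounded in $t$), and gives $\tfrac{d}{dt}m_{t}=\Dytau v(t,\cdot)$ pointwise in $t$. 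Integrating this identity in $t$, all slices $m_{t}$ with $t\in[0,T]$ are dominated by a single Radon measure $\Theta$ on the $\ytau$–line, for instance $\Theta:=m_{0}+(\pi_{\ytau})_{\sharp}\bigl(|\mu|\mres([0,T]\times\R)\bigr)$. I would then relabel $\ytau$ by the generalised inverse of its distribution function $\ytau\mapsto\Theta((-\infty,\ytau])+\ytau$: in the new variable every slice of $\Dytau w$ acquires density at most $1$ with respect to $\Ll^{1}$, uniformly for $t\in[0,T]$. Relabelling only permutes the characteristics $t\mapsto w(t,\ytau)$, so the $C^{1,1}$–regularity in $t$ and the class of $\mu$ are preserved; a routine exhaustion $T\uparrow\infty$, legitimate because the conclusion is local in time, produces a single relabelling along which $0\le H:=dm_{t}/d\Ll^{1}$ is locally bounded.

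It then remains to upgrade this slicewise information to the two–dimensional statement $\Dytau w=H\Ll^{2}$ with $H\in\distrBdd_{\loc}(\R^{+}\times\R)$ jointly measurable, which follows from the disintegration $\Dytau w=\int m_{t}\,dt$ together with the uniform slice bound, by Fubini. Finally, $\Dt H\,\Ll^{2}=\pt(H\Ll^{2})=\pt\Dytau w=\pytau\pt w=\Dytau(\pat w)$ as distributions, and the right–hand side is precisely the Radon measure $\mu$; hence $\Dt H=\Dytau(\pat w)\in\mcM(\R^{+}\times\R)$, as required.

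The main obstacle is the measure–theoretic bookkeeping rather than any single estimate: making rigorous the disintegration $\Dytau w=\int m_{t}\,dt$ and the pointwise-in-$t$ derivative $\tfrac{d}{dt}m_{t}=\Dytau v(t,\cdot)$, justifying the uniform domination $m_{t}\le\Theta$ (which is where the hypothesis that $\mu$ is a \emph{measure}, controlling how fast characteristics spread in $\ytau$, is genuinely used), and above all passing from slicewise to joint absolute continuity of $\Dytau w$. Constructing one monotone relabelling valid for all times at once, rather than a $t$–dependent one, is the conceptual crux.
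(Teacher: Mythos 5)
Your proposal is correct and follows essentially the same route as the paper's: the paper disintegrates the mixed-derivative measure as $\int\nu_{\ytau}(dt)\,m(d\ytau)$ and relabels $\ytau$ by $h(\ytau)=\ytau+m((-\infty,\ytau])+\Dif w^{\red}_{0}((-\infty,\ytau])$, which is exactly your domination of the slices $m_{t}$ by $\Theta=m_{0}+(\pi_{\ytau})_{\sharp}\bigl(|\mu|\mres([0,T]\times\R)\bigr)$ followed by the monotone relabelling via $\ytau\mapsto\Theta((-\infty,\ytau])+\ytau$, and both arguments then read off $\Dytau w=H\Ll^{2}$ with $0\leq H$ locally bounded. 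The only divergences are minor: you get $\Dt H=\mu$ by commuting distributional derivatives, whereas the paper runs an incremental-ratio argument because it wants the stronger pointwise-in-$t$ integration-by-parts formula~\eqref{E:derivataperparti} for later use, and you rightly make explicit the monotonicity of $\ytau\mapsto w(t,\ytau)$, which the statement leaves implicit but which is genuinely needed (and available in the intended application $w=\chi$) for the sign $H\geq0$.
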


We rather prefer to prove the following corollary, which is more related to the notation we adopt: the proof of Lemma~\ref{L:BVcontsol1astratto} is entirely analogous.
The irrelevant disadvantage is that the commutation of the $t$- and $\ytau$- distributional derivatives is less evident than in the above lemma.
 
\begin{corollary}
\label{L:BVcontsol1}
Let $u$ be a continuous Lagrangian solution of~\eqref{E:basicPDE} such that $\partial_{x}u(t,x)$ is a Radon measure.
Then one can choose a Lagrangian parameterization $\chi$ which is absolutely continuous (see Definition~\ref{D:LagrangianParameterizationabsolutelycontinuous}): this additional regularity allows the injection
\begin{align*}
&\pwx(\Omega)&&\xrightarrow{[\cdot]}&&\distrBdd(\Omega)\\
&\gx=[\gfr]_{\chi} &&\mapsto&&\gd =[\gfr]=[\gx].
\end{align*}
Moreover, for every test function $\varPhi(t,\ytau)\in C^{1}_{\rc}(\R^{+}\times\R)$ and for $\Ll^{1}$-a.e.~$t$ one has
\begin{align}
\label{E:derivataperparti}
\ddt \int \varPhi (t,\ytau)H(t,\ytau)d\ytau   
&=\int \varPhi(t,\ytau) \Dif f'(U_{t}^{\reu}(d\ytau))+\int \pat{\varPhi(t,\ytau)} H(t,\ytau)d\ytau 
\end{align}
\end{corollary}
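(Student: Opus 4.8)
The plan is to apply Lemma~\ref{L:BVcontsol1astratto} to the Lagrangian parameterization itself, that is to $w=\chi$, and then to read off the three assertions of the corollary from its three conclusions. So first I would verify the two hypotheses of that lemma. For the first, fix $y$ and look at the characteristic curve $t\mapsto\chi_{y}^{\reu}(t)$: since $u$ is a Lagrangian solution with bounded source, $\ddt u(t,\chi(t,y))=\gx$ is bounded, hence $U(t,\ytau):=u(t,\chi(t,\ytau))$ is Lipschitz in $t$ and, as $f\in C^{2}$ with $u$ bounded, $\pat{\chi(t,\ytau)}=f'(U(t,\ytau))$ is Lipschitz in $t$ as well; this gives $\chi_{y}^{\reu}\in C^{1,1}(\R^{+})$. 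For the second hypothesis I would invoke the standing assumption that $\partial_{x}u$ is a Radon measure: formally $\partial_{t\ytau}\chi=\Dytau f'(U)=f''(U)\,\Dytau U$ with $\Dytau U=(\partial_{x}u\circ i_{\chi})\,\partial_{\ytau}\chi$, so the mixed derivative is the pullback of the measure $\partial_{x}u$ weighted by the bounded Jacobian $\partial_{\ytau}\chi$. Making this rigorous --- pushing the measure $\partial_{x}u$ through the merely continuous, non-differentiable change of variables $i_{\chi}$ --- is the first delicate point, and it is precisely what secures $\partial_{t\ytau}\chi\in\mcM(\R^{+}\times\R)$.

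With the hypotheses in place, Lemma~\ref{L:BVcontsol1astratto}, together with the reparameterization of the $\ytau$-variable it allows, produces a bounded $H\ge0$ with $\Dytau\chi=H\Ll^{2}$ and $\Dt H=\Dytau(\pat\chi)=\Dytau f'(U)\in\mcM(\R^{+}\times\R)$. The identity $\Dytau\chi=H\Ll^{2}$ with $H\in\distrBdd_{\loc}$ says that for $\Ll^{1}$-a.e.\ $t$ the monotone map $\ytau\mapsto\chi(t,\ytau)$ is absolutely continuous with bounded density, hence sends $\Ll^{1}$-null sets to $\Ll^{1}$-null sets; integrating in $t$ by Fubini, $i_{\chi}$ maps $\Ll^{2}$-null sets to $\Ll^{2}$-null sets, which is absolute continuity of $\chi$ in the sense of Definition~\ref{D:LagrangianParameterizationabsolutelycontinuous}. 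Once this is known, the claimed injection $\pwx(\Omega)\to\distrBdd(\Omega)$, $\gx=[\gfr]_{\chi}\mapsto\gd=[\gfr]$, is exactly the well-definedness furnished by Lemma~\ref{L:inclusioneinversa}.

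It remains to establish~\eqref{E:derivataperparti}, and here I would avoid manipulating the mixed derivative directly by integrating by parts in $\ytau$ twice. Since $\Dytau\chi=H\Ll^{2}$ and $\varPhi$ is compactly supported, $\int\varPhi(t,\ytau)H(t,\ytau)\,d\ytau=-\int\partial_{\ytau}\varPhi(t,\ytau)\,\chi(t,\ytau)\,d\ytau$. Because $\pat\chi=f'(U)$ is bounded, $t\mapsto\chi(t,\ytau)$ is uniformly Lipschitz, so I may differentiate this integral in $t$ under the integral sign, getting $-\int\partial_{t\ytau}\varPhi\,\chi\,d\ytau-\int\partial_{\ytau}\varPhi\,f'(U)\,d\ytau$ for a.e.\ $t$. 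A second integration by parts in $\ytau$ turns the first term into $\int\pat\varPhi\,H\,d\ytau$ and, at each $t$ where $\ytau\mapsto f'(U(t,\ytau))$ is $\BV$, the second into $\int\varPhi\,\Dif f'(U_{t}^{\reu}(d\ytau))$; as $\Dytau f'(U)=\Dt H$ is a finite measure, its $t$-sections are $\BV$ for $\Ll^{1}$-a.e.\ $t$ by the usual slicing of functions with measure derivative, yielding~\eqref{E:derivataperparti}. This double integration by parts in $\ytau$ is exactly what realizes, slicewise and rigorously, the commutation of the $t$- and $\ytau$-distributional derivatives.

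I expect essentially all the genuine work to sit in the first step and in this commutation. The authors themselves flag that the commutation of the $t$- and $\ytau$-derivatives is less transparent than in the abstract lemma: here $\chi$, $H$ and $f'(U)$ are only $\BV$-type objects linked through the non-smooth map $i_{\chi}$, so one must ensure that $\Dt\Dytau\chi=\Dytau\Dt\chi$ holds as an equality of measures and is compatible with slicing in $t$, not merely as distributions --- which is why I prefer to route the argument through the two $\ytau$-integrations by parts rather than through a direct differentiation of the mixed derivative. Verifying that $\partial_{t\ytau}\chi$ is a bona fide measure, by transporting $\partial_{x}u$ through $i_{\chi}$ with the bounded Jacobian $H$ and checking that the $\ytau$-reparameterization can be chosen consistently in $t$, is the main obstacle; the absolute-continuity and injection claims then follow formally.
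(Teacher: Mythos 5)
Your overall strategy --- deducing the corollary from Lemma~\ref{L:BVcontsol1astratto} applied to $w=\chi$ --- is the route the paper itself advertises, and several pieces of your argument are sound: the verification that $\chi_{\ytau}^{\reu}\in C^{1,1}$ (via the $G$-Lipschitz bound on $U$ in $t$ and $f\in C^{2}$), the passage from $\Dytau\chi=H\Ll^{2}$ to absolute continuity of $\chi$ and then to the injection via Lemma~\ref{L:inclusioneinversa}, and the double integration by parts in $\ytau$ for~\eqref{E:derivataperparti}. The genuine gap is in the verification of the second hypothesis of the lemma --- that $\partial_{t\ytau}\chi$ is a Radon measure --- which is exactly the step you defer as ``the main obstacle''. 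The formula you propose, $\Dytau U=(\px u\circ i_{\chi})\,\pytau\chi$, is not meaningful: $\px u$ is a measure, so it cannot be composed pointwise with $i_{\chi}$, nor multiplied against the measure $\pytau\chi$; and invoking ``the bounded Jacobian $\pytau\chi$'' is circular, because the boundedness of the density $H$ (after reparameterization) is precisely part of the conclusion of Lemma~\ref{L:BVcontsol1astratto}, not a datum available beforehand. The paper's Remark~\ref{R:equivBVhyp} shows the obstacle is illusory and no transport of $\px u$ through $i_{\chi}$ is needed: since $\chi_{t}^{\red}$ is monotone, continuous and surjective, the total variation of $U_{t}^{\reu}=u_{t}^{\red}\circ\chi_{t}^{\red}$ \emph{equals} that of $u_{t}^{\red}$, which is locally finite for $\Ll^{1}$-a.e.~$t$ by slicing of $\px u$; combined with the uniform $t$-Lipschitz bound, the slicing theory of $\BV$ functions gives $U\in\BV_{\loc}$, and then Vol'pert's chain rule yields $\partial_{t\ytau}\chi=\Dytau\big(\pat{\chi}\big)=\Dytau f'(U)=f''(U)\,\Dytau U\in\mcM(\R^{+}\times\R)$.

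Two further caveats. First, the paper never proves Lemma~\ref{L:BVcontsol1astratto} independently: it proves the corollary directly and declares the lemma's proof ``entirely analogous'', so using the lemma as a black box leaves the actual content unaccounted for --- the disintegration $\Dytau U=\int\nu_{\ytau}(dt)\,m(d\ytau)$, the explicit monotone reparameterization $h$ (needed so that $\bar\chi(t,y)=\chi(t,h^{-1}(y))$ is still a Lagrangian parameterization, a point you flag but do not settle), and the formula~\eqref{E:defH} for $H$. Second, your derivation of~\eqref{E:derivataperparti} differentiates $\pytau\varPhi$ in $t$, hence uses $\partial_{t\ytau}\varPhi$, which need not exist for $\varPhi\in C^{1}_{\rc}$; this requires an approximation by $C^{2}$ test functions (patchable, but missing). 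The paper's own proof avoids it by taking incremental ratios in $t$ and exploiting the explicit formula for $H$, for which only the weak continuity of $t\mapsto\Dif f'(U_{t}^{\reu})$ is required.
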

\begin{lemma}
\label{L:BVcontsol2}
Under the assumptions of Corollary~\ref{L:BVcontsol1}, $u$ has locally bounded variation. Moreover, denoting by $\gx=[\gfr]_{\chi}$ a Lagrangian source, then one has
\[
\Dt u(dt,dx) + \Dx f(u(dt,dx)) = \gfr(t,x)dtdx.
\]
\end{lemma}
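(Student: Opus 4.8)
The plan is to prove the single distributional identity $\pt u + \px f(u)=\gfr$ and read off everything else from it. Indeed, once this holds in $\D(\R^{+}\times\R)$, the local bounded variation of $u$ is automatic: by hypothesis $\Dx u$ is a Radon measure, so for $\Ll^{1}$-a.e.\ $t$ the slice $u(t,\cdot)$ lies in $\BV(\R)$, and Vol'pert's chain rule gives $\Dx f(u)=f'(u)\,\Dx u$, again a Radon measure since $f'(u)$ is continuous and bounded on the compact support of $u$. As $\gfr$ is bounded, the identity forces $\pt u=\gfr-\px f(u)$ to be a measure too, whence $u\in\BV_{\loc}$ and the identity upgrades to the asserted equality of measures $\Dt u(dt,dx)+\Dx f(u)(dt,dx)=\gfr\,dt\,dx$. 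There is thus no circularity: I never assume $\pt u$ is a measure while establishing the identity.

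To prove the identity I pass to the Lagrangian coordinates of Corollary~\ref{L:BVcontsol1}. Let $\chi$ be the absolutely continuous parameterization, $H=\partial_{y}\chi\ge0$ with $\Dytau\chi=H\Ll^{2}$, and write $U(t,y)=u(t,\chi(t,y))$. Absolute continuity, monotonicity and surjectivity of $y\mapsto\chi(t,y)$ give, for $\Ll^{1}$-a.e.\ $t$, the change of variables $\int_{\R}\phi(\chi(t,y))H(t,y)\,dy=\int_{\R}\phi(x)\,dx$, which I use to transport the three integrals of the weak formulation to the $(t,y)$-plane. Two structural facts drive the computation: the characteristic ODE $\pt\chi=f'(U)$ and the Lagrangian solution property $\pt U(t,y)=\gfr(t,\chi(t,y))$ for $\Ll^{1}$-a.e.\ $t$ (so $U(\cdot,y)$ is $G$-Lipschitz), where I use $\gx=[\gfr]_{\chi}$.

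The core computation evaluates $\la\pt u,\varphi\ra=-\iint u\,\varphi_{t}$ for $\varphi\in C^{\infty}_{\rc}$. Setting $\Psi(t,y)=\varphi(t,\chi(t,y))$, the chain rule gives $\varphi_{t}(t,\chi)=\pt\Psi-\varphi_{x}(t,\chi)f'(U)$, so after the change of variables
\[
\la\pt u,\varphi\ra=-\iint U(\pt\Psi)H\,dy\,dt+\iint u\,f'(u)\,\varphi_{x}\,dt\,dx .
\]
The first integral is handled by \eqref{E:derivataperparti} applied with the test function $\Psi U$ and integrated in $t$ (its left-hand side integrates to zero by compact support): this yields $-\iint U(\pt\Psi)H=\iint\Psi U\,\mu+\iint\Psi(\pt U)H$, where $\mu=\Dytau f'(U_{t}^{\reu})=\Dt H$. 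Since $\pt U=\gfr(t,\chi)$, the last term is exactly $\iint\varphi\,\gfr\,dt\,dx$. The measure term is then computed by integrating by parts twice in $y$ for a.e.\ $t$, using $\partial_{y}\Psi=\varphi_{x}(t,\chi)H$ and $\partial_{y}f(U)=f'(U)\partial_{y}U$ (Vol'pert in the $y$-variable, legitimate because $U(t,\cdot)\in\BV$): one finds $\iint\Psi U\,\mu=\iint\bigl(f(u)-u\,f'(u)\bigr)\varphi_{x}\,dt\,dx$. Summing the contributions, the terms $u\,f'(u)\,\varphi_{x}$ cancel and $\la\pt u,\varphi\ra=\iint f(u)\,\varphi_{x}+\iint\varphi\,\gfr$, i.e.\ $\la\pt u+\px f(u),\varphi\ra=\iint\varphi\,\gfr$, the desired identity. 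A one-line smooth check confirms the bookkeeping: for classical solutions $\pt H=f''(U)\partial_{y}U$ turns $\iint\Psi U\,\mu$ into $\iint(f(u)-uf'(u))\varphi_{x}$ after the two $y$-integrations by parts.

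The main obstacle is entirely in the low-regularity calculus rather than in the algebra. Equation~\eqref{E:derivataperparti} is stated for $C^{1}_{\rc}$ test functions, whereas I must use $\Psi U$, which is only Lipschitz in $t$; this requires extending \eqref{E:derivataperparti} to Lipschitz-in-$t$ test functions, e.g.\ by mollifying $U$ in $t$ and passing to the limit with the uniform bound $\norm{\pt U}_{\infty}\le G$. Equally delicate is the twofold integration by parts in $y$: it rests on $U(t,\cdot)\in\BV$ (a consequence of $\Dx u\in\mcM$ via slicing and composition with the monotone map $\chi(t,\cdot)$), on the Vol'pert product and chain rules for $y\mapsto\Psi(t,y)U(t,y)$ and $y\mapsto f(U(t,y))$, and on a disintegration of $\mu=\Dt H$ in $t$ that makes the slice-wise identities measurable and integrable. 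These are precisely the points where the continuity of $u$ (ruling out jump parts) and the \emph{absolute continuity} of $\chi$ (making the change of variables exact and identifying $[\gfr]_{\chi}$ with $[\gfr]\in\distrBdd$, as in Corollary~\ref{L:BVcontsol1} and Lemma~\ref{L:inclusioneinversa}) are essential.
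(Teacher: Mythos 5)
Your proposal is correct and follows essentially the same route as the paper: change of variables to Lagrangian coordinates via the absolutely continuous $\chi$ of Corollary~\ref{L:BVcontsol1}, Vol'pert chain rule in $y$ producing the $f(U)-Uf'(U)$ terms, the $t$-integration by parts through~\eqref{E:derivataperparti} applied to $\varPhi U$, identification of $\pat U$ with $\gfr\circ i_{\chi}$ by absolute continuity, and the final cancellation (you cancel the $u f'(u)\varphi_{x}$ terms where the paper cancels the equivalent $\varPhi U \Dytau f'(U)$ terms). You also correctly flag, and propose a mollification fix for, the one point the paper passes over silently—that \eqref{E:derivataperparti} must be applied to a test function that is only Lipschitz in $t$—and your derivation of $\BV_{\loc}$ from the established identity matches the paper's.
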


Note that Lemma~\ref{L:BVcontsol1} does not follow from the theory of ODEs with rough coefficients because the notion of Lagrangian parameterization is more specific than a solution of a system of ODEs.
In this paper, where the focus is on the PDE~\eqref{E:basicPDE}, we rather prefer to prove the lemma in the form of Corollary~\ref{L:BVcontsol1}. We stress once more that computations below, switching notations, prove indeed Lemma~\ref{L:BVcontsol1astratto}, proof which could be slightly shortened in a more abstract setting.

\begin{remark}
\label{R:equivBVhyp}
Let $U(t,\ytau):=u(t,\chi(t,\ytau))$ for some Lagrangian parameterization $\chi$.
We notice that one can equivalently assume that either $\px u$ or $\pytau U(t,\ytau)$ is a Radon measure.
This is a direct consequence of the slicing theory of $\BV$ functions, because $\chi_{t}^{\red}(\ytau)$ is monotone and the total variation of $u_{t}^{\red}(x)$ is equal to the total variation of $U_{t}^{\red}(\ytau)$. 
\end{remark}

\begin{proof}[Proof of Lemma~\ref{L:BVcontsol1}]Let $\chi$ be a Lagrangian parameterization corresponding to $u$.
By assumption and by Remark~\ref{R:equivBVhyp}, for $\Ll^{1}$-a.e.~$t$ also the function 
\[\ytau\mapsto u(i_{\chi(\ytau)}(t))=:U_{t}^{\reu} (\ytau)\]
has locally bounded variation.
Moreover, for every $\ytau$ by definition of Lagrangian solution
\[
t \mapsto u(i_{\chi(\ytau)}(t))=:U^{\red}_{\ytau}(t)
\]
is Lipschitz continuous. We deduce by the slicing theory of $\BV$-functions~\cite[Th.~3.103]{AFP} that also the function $(t,\ytau)\mapsto U(t,\ytau)$ has locally bounded variation.
We show now that the Lagrangian parameterization $\chi$ can be here assumed to be absolutely continuous.

\step{Renormalization of $y$ for absolutely continuity of $\chi$}
Consider the two coordinate disintegrations of the measure on the plane given by $\Dytau U(t,\ytau)$:
by the classical disintegration theorem~\cite[Th.~2.28]{AFP} there exists a nonnegative Borel measure $m\mathcal \in \mathcal M^{+}(\R)$ and a measurable measure-valued map $\ytau\mapsto\nu_{\ytau}\in\mathcal M^{+}(\R)$ such that
\begin{equation}
\label{E:disintdtauu}
\Dytau U(dt,d\ytau)
=
\int
{\Dif U_{t}^{\reu}(d\ytau)} dt = \int \nu_{\ytau}(dt) m(d\ytau) .
\end{equation}
The first equality is just the slicing theory for $\BV$ functions~\cite[Th.~3.107]{AFP}.

\begin{claim}
\label{C:abscontparam}
Consider the Lagrangian parameterization
$\bar \chi(t,y):= \chi(t,h^{-1}(y))$ with $h$ defined by
\begin{align*}
h(\ytau)&:=\ytau+m((-\infty,\ytau])+\Dif\chi^{\red}_{0}((-\infty,\ytau]).
\end{align*}
Then one has that $h_{\sharp}m\ll \Ll^{1}$ and $h_{\sharp}\Dif\chi^{\red}_{0}\ll\Ll^{1}$ with densities bounded by $1$.
\end{claim}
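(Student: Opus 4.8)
The plan is to read $h$ as a base-point shift of the cumulative distribution function of the nonnegative measure $\mu := \Ll^{1}+m+\Dif\chi^{\red}_{0}$, so that the three summands defining $h$ contribute exactly $\Ll^{1}$, $m$ and $\Dif\chi^{\red}_{0}$ on every half-open interval:
\[
h(\ytau_{2})-h(\ytau_{1})=\mu\big((\ytau_{1},\ytau_{2}]\big)\qquad(\ytau_{1}<\ytau_{2}).
\]
Since $m\leq\mu$ and $\Dif\chi^{\red}_{0}\leq\mu$, it suffices to prove the single generic statement: if $h$ is the distribution function of $\mu$ and $\nu\leq\mu$ is one of its nonnegative components, then $h_{\sharp}\nu\leq\Ll^{1}$. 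The two assertions of the claim are then the two choices $\nu=m$ and $\nu=\Dif\chi^{\red}_{0}$.

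The first and, in my view, decisive step is to check that $h$ is continuous and strictly increasing. Strict monotonicity is immediate from the summand $\ytau$, which also makes $h$ injective and $h^{-1}$ single-valued on the range of $h$, so that the reparameterization $\bar\chi(t,y)=\chi(t,h^{-1}(y))$ is well posed. Continuity is the genuine point, as it is equivalent to $\mu$ — hence to both $m$ and $\Dif\chi^{\red}_{0}$ — being non-atomic. For $\Dif\chi^{\red}_{0}$ this holds because $\ytau\mapsto\chi(0,\ytau)$ is continuous and monotone, so its derivative measure carries no atom. For $m$ I would argue from the disintegration~\eqref{E:disintdtauu}: for each $t$ the slice $U^{\reu}_{t}(\ytau)=u(t,\chi(t,\ytau))$ is continuous in $\ytau$, being the composition of the continuous maps $\ytau\mapsto\chi(t,\ytau)$ and $x\mapsto u(t,x)$; hence each $\Dif U^{\reu}_{t}$ is non-atomic, the mass that $\Dytau U$ places on every vertical line $\{\ytau=\ytau_{0}\}$ equals $\int \Dif U^{\reu}_{t}(\{\ytau_{0}\})\,dt=0$, and therefore the $\ytau$-marginal $m$ charges no point. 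I expect this verification to be the crux of the argument: were $m$ to have an atom at some $\ytau_{0}$, the map $h$ would jump there and the pushforward $h_{\sharp}m$ would inherit an atom at $h(\ytau_{0})$, making the conclusion false outright; the continuity forced by the continuity of $u$ and of $\chi$ is exactly what rescues it.

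Once continuity and strict monotonicity of $h$ are in hand, the estimate is the routine distribution-function computation. For $\ytau_{1}<\ytau_{2}$ one has $h^{-1}\big((h(\ytau_{1}),h(\ytau_{2})]\big)=(\ytau_{1},\ytau_{2}]$, whence
\[
(h_{\sharp}m)\big((h(\ytau_{1}),h(\ytau_{2})]\big)=m\big((\ytau_{1},\ytau_{2}]\big)\leq\mu\big((\ytau_{1},\ytau_{2}]\big)=h(\ytau_{2})-h(\ytau_{1})=\Ll^{1}\big((h(\ytau_{1}),h(\ytau_{2})]\big).
\]
Because $h$ is continuous its range is an interval and these half-open images generate the relevant Borel $\sigma$-algebra, while on the complement of the range the preimage under $h$ is empty; a standard outer-regularity (monotone class) extension then upgrades the inequality to $(h_{\sharp}m)(A)\leq\Ll^{1}(A)$ for every Borel set $A$. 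This is precisely $h_{\sharp}m\ll\Ll^{1}$ with Radon--Nikodym density bounded by $1$. Repeating the computation verbatim with $\Dif\chi^{\red}_{0}$ in place of $m$ (again using $\Dif\chi^{\red}_{0}\leq\mu$) yields the second bound and completes the proof of the claim.
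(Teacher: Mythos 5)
Your proof is correct and follows essentially the same route as the paper's: both arguments establish that $h$ is strictly increasing and continuous (continuity coming from the non-atomicity of $m$ and $\Dif\chi^{\red}_{0}$, which in turn follows from the continuity of $U$ and $\chi$), and then bound $h_{\sharp}m$ and $h_{\sharp}\Dif\chi^{\red}_{0}$ against $\Ll^{1}$ on intervals using the fact that the increment of $h$ dominates the mass of each of the three summand measures. Your packaging of the three terms into $\mu=\Ll^{1}+m+\Dif\chi^{\red}_{0}$, and the explicit monotone-class extension from intervals to Borel sets, are only cosmetic refinements of the interval estimate the paper performs directly.
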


\begin{proof}[Proof of Claim~\ref{C:abscontparam}]
Fix any $a\leq b$.
We first observe that
\begin{align*}
h(b)-h(a)&=b-a+m((a,b])+\Dif\chi^{\red}_{0}((a,b]) \\
&\geq \max\{b-a, m((a,b]), \Dif\chi^{\red}_{0}((a,b])\}\geq 0.
\end{align*}
This shows that $\Dif h\geq \Ll^{1}$, $\Dif h\geq m$ and $\Dif h\geq\Dif\chi^{\red}_{0}$.
Since $U$ and $\chi$ are continuous functions, then $\Dytau U$ and $\Dif\chi^{\red}_{0}$ are continuous measures and therefore $h$ is a continuous function.
Fix any $a<b$ and suppose $h(a')= a $, $h(b')=b$. Then one verifies that
\begin{align*}
\frac{h_{\sharp}m([a,b])}{b-a}&\leq\frac{m([ a', b'])}{h( b')-h( a')} \\
&\leq \frac{m([ a', b'])}{b'-a'+m((a',b'])+\Dif\chi^{\red}_{0}((a',b'])}<1.
\end{align*}
This concludes the proof for $h_{\sharp}m$, and $h_{\sharp}\Dif\chi^{\red}_{0}$ is entirely similar.
\end{proof}

Claim~\ref{C:abscontparam} assures that one can reparameterize the $\ytau$-variable so that both $m(d\ytau)$ and $\Dif\chi^{\red}_{0}(\ytau)$ are absolutely continuous with bounded densities. 
Let $\vartheta(\ytau)$ and $\beta(\ytau)$ be their Radon-Nicodym derivatives w.r.t. $\Ll^{1}$ after, eventually, the reparameterization of $y$:
\begin{equation}
\label{E:alphabeta}
m(d\ytau)=:\vartheta(\ytau)d\ytau,
\qquad
\Dif\chi^{\red}_{0}(\ytau)=:\beta(\ytau)d\ytau \qquad \vartheta,\beta\in L^{\infty}.
\end{equation}

\step{Formula for $\Dytau \chi$}
In this step we prove Claim~\ref{C:expressiondychi} below, which implies that both the distributional partial derivatives of $\chi$ are absolutely continuous measures. The claim below yields then that $\chi$ is an absolutely continuous Lagrangian parameterization.
As  $\chi(t,\ytau)$ maps negligible sets into negligible sets, then one has the inclusion stated in Lemma~\ref{L:inclusioneinversa}:
\begin{align*}
&
\pwx(\R^{+}\times\R)
&\hookrightarrow
&&\distrBdd(\R^{+}\times\R).
\end{align*}

\begin{claim}
\label{C:expressiondychi}
The measure $\Dytau \chi(dt,d\ytau)$ is given by the following formula:
\begin{subequations}
\label{E:disintdtauchi}
\begin{gather}
\label{E:defH1}
\Dytau\chi(dt,d\ytau)=H(t,\ytau) dtd\ytau, \qquad \distrBdd_{\loc}\left(i^{-1}_{\chi}(\Omega)\right)\ni H(t,\ytau)
\geq0
\\
\label{E:defH}
H(t,\ytau) =\vartheta(\ytau)\int^{t}_{0} f''(U(s,\ytau))\nu_{\ytau}(ds)+\beta(\ytau) .
\end{gather}
\end{subequations}
\end{claim}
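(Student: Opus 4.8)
The goal is to compute the distributional $\ytau$-derivative of $\chi$ and show it has the absolutely continuous form~\eqref{E:defH} with a nonnegative, locally bounded density $H$. The strategy is to represent $\chi$ via the flow identity that defines a characteristic curve and differentiate in $\ytau$, exchanging the order of differentiation in $t$ and $\ytau$. Concretely, since each $\chi_{y}^{\reu}$ is a characteristic curve, I would start from the integral form
\[
\chi(t,\ytau)=\chi(0,\ytau)+\int_{0}^{t} f'\bigl(U(s,\ytau)\bigr)\,ds,
\]
using that $\dot\chi_{y}^{\reu}(t)=\lambda(t,\chi(t,y))=f'(u(t,\chi(t,y)))=f'(U(t,\ytau))$ by Definition~\ref{D:LagrangianParameterization}. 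Differentiating formally in $\ytau$ gives
\[
\Dytau\chi(t,\ytau)=\Dytau\chi(0,\ytau)+\int_{0}^{t} f''\bigl(U(s,\ytau)\bigr)\,\Dytau U(s,\ytau)\,ds,
\]
and after the reparameterization from Claim~\ref{C:abscontparam} the boundary term $\Dytau\chi(0,\cdot)=\Dif\chi_{0}^{\red}$ contributes $\beta(\ytau)\,d\ytau$, while the integral term is exactly $\vartheta(\ytau)\int_{0}^{t} f''(U(s,\ytau))\,\nu_{\ytau}(ds)$ once one inserts the disintegration~\eqref{E:disintdtauu}, namely $\Dif U_{t}^{\reu}(d\ytau)=\nu_{\ytau}(dt)\,m(d\ytau)=\vartheta(\ytau)\,\nu_{\ytau}(dt)\,d\ytau$. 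This is precisely formula~\eqref{E:defH}.

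\textbf{Key steps in order.} First I would justify the integral representation of $\chi$ rigorously: it holds pointwise for every fixed $\ytau$ because $t\mapsto\chi(t,\ytau)$ is $C^{1}$ with the stated derivative. Second, I would test $\Dytau\chi$ against a product test function $\psi(t)\varphi(\ytau)$ and transfer the $\ytau$-derivative onto $\varphi$, then substitute the integral representation and use Fubini to bring the $\ytau$-derivative inside the time integral; the commutation of $\Dt$ and $\Dytau$ is the content that makes the boundary term and the $f''$-term appear separately. Third, I would verify that the resulting measure has a density with respect to $\Ll^{2}$, i.e.\ that both pieces are absolutely continuous in $(t,\ytau)$: the term $\beta(\ytau)\,d\ytau$ is absolutely continuous by~\eqref{E:alphabeta}, and the time-integral term is absolutely continuous because $H(t,\ytau)$ defined by~\eqref{E:defH} is, for fixed $\ytau$, an absolutely continuous function of $t$ (a primitive of the measure $f''(U(\cdot,\ytau))\nu_{\ytau}$) and measurable in $\ytau$. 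Fourth, nonnegativity $H\geq0$ follows from the monotonicity requirement in Definition~\ref{D:LagrangianParameterization}: $\ytau\mapsto\chi(t,\ytau)$ is nondecreasing for every $t$, so its distributional derivative $\Dytau\chi$ is a nonnegative measure, forcing its density $H$ to be nonnegative $\Ll^{2}$-a.e. Finally, the local boundedness $H\in\distrBdd_{\loc}$ comes from boundedness of $\vartheta,\beta$ together with a bound on $\int_{0}^{t} f''(U(s,\ytau))\nu_{\ytau}(ds)$, controlled by $\lVert f''\rVert_{\infty}$ on the compact range of $u$ and the total mass of $\nu_{\ytau}$.

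\textbf{Main obstacle.} The delicate point is the exchange of the two distributional derivatives, equivalently the rigorous passage from the pointwise-in-$\ytau$ integral identity to a genuine identity of measures on the $(t,\ytau)$-plane. One must control the measurable dependence of the disintegrated measures $\nu_{\ytau}$ on $\ytau$ and ensure Fubini applies to the iterated integral $\int\!\int f''(U(s,\ytau))\,\nu_{\ytau}(ds)\,d\ytau$; this is where the slicing theory of $\BV$ functions (cited as~\cite[Th.~3.103, 3.107]{AFP}) and the disintegration theorem~\cite[Th.~2.28]{AFP} do the essential work, guaranteeing that $U$ has locally bounded variation with the product-form slicing~\eqref{E:disintdtauu}. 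A secondary technical point is that absolute continuity of the $t$-slices of $\Dytau\chi$ does not by itself yield $\Ll^{2}$-absolute continuity of the full planar measure; this must be argued from the explicit joint density~\eqref{E:defH}, checking its joint measurability and the fact that no mass concentrates on $\Ll^{2}$-null sets — which is exactly the content needed to invoke Lemma~\ref{L:inclusioneinversa} and conclude the claimed injection $\pwx\hookrightarrow\distrBdd$.
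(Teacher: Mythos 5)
Your proposal is correct and follows essentially the same route as the paper's proof: write $\chi$ as a primitive, $\chi(t,\ytau)=\chi(0,\ytau)+\int_0^t f'(U(s,\ytau))\,ds$, then differentiate under the integral in the distributional sense by testing and applying Fubini, and identify the density via Vol'pert's chain rule $\Dytau f'(U)=f''(U)\Dytau U$ together with the disintegration~\eqref{E:disintdtauu} and the reparameterization~\eqref{E:alphabeta}. Your additional explicit remarks — nonnegativity of $H$ from the monotonicity of $\chi^{\red}_{t}$ and local boundedness from $\vartheta,\beta\in L^{\infty}$ and the normalized masses of $\nu_{\ytau}$ — are consistent with (and slightly more detailed than) what the paper leaves implicit.
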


\begin{proof}[Proof of Claim~\ref{C:expressiondychi}]
Being
\begin{equation}
\label{E:dotchiformula}
\dot \chi_{\ytau}^{\reu}(t) = f'(U(t,\ytau))
\end{equation}
by Vol'pert chain rule~\cite[Th.~3.96]{AFP} and~\eqref{E:disintdtauu} one has the following disintegration
\begin{equation}
\label{E:disintdotchi}
\Dytau f'(U(dt,d\ytau)) = f''(U(t,\ytau)) \Dytau U(dt,d\ytau) 
= \int \left\{f''(U(t,\ytau))\nu_{\ytau}(dt)\right\} m(d\ytau)
.
\end{equation}
One can compute $\Dytau \chi$ in the following way: write $\chi$ as a primitive and differentiate under the integral.
For every test function $\varPhi(t,\ytau)$
\begin{align*}
-\iint \varPhi(t,\ytau) \Dytau\chi(dt,d\ytau)
&=
\iint \paytau {\varPhi(t,\ytau)} \chi(t,\ytau)d\ytau dt
\\&=
\iint \paytau {\varPhi(t,\ytau)}\left\{ \int^{t}_{0}\dot\chi_{\ytau}^{\reu}(s)ds+\chi(0,\ytau)\right\}d\ytau dt
\\&=
 \iiint^{t}_{0}\paytau {\varPhi(t,\ytau)}\left[\dot\chi_{\ytau}^{\reu}(s)+\chi^{\red}_{0}(\ytau)/t\right]dsd\ytau dt
\\
&\stackrel{\eqref{E:dotchiformula}}{=}
\int \left\{ \iint^{t}_{0}\paytau {\varPhi(t,\ytau)}\left[ f'(U(s,\ytau)) +\chi^{\red}_{0}(\ytau)/t\right]dsd\ytau\right\}dt
\\
&=
-\int \left\{ \iint^{t}_{0} {\varPhi(t,\ytau)}\left[ \Dytau f'(U(ds,d\ytau)) +\Dif\chi^{\red}_{0}(\ytau)/t\right]\right\}dt
\end{align*}
The last step was allowed because $U\in\BV$. Owing to~\eqref{E:disintdotchi},~\eqref{E:alphabeta} we can now proceed with
\begin{align*}
&{=}
-\int \left\{ \iint^{t}_{0} \varPhi(t,\ytau)f''(U(s,\ytau))\nu_{\ytau}(ds)\vartheta(\ytau)d\ytau\right\} dt- \int\left\{\iint^{t}_{0}\left[\beta(\ytau)/t\right]dsd\ytau \right\}dt 
\\
&\stackrel{}{=}
- \iint\left\{\varPhi(t,\ytau)\left[\beta(\ytau)+\vartheta(\ytau)\int^{t}_{0} f''(U(s,\ytau))\nu_{\ytau}(ds)\right]\right\}d\ytau dt .
\end{align*}
Note that $H$ is the function within the inner square brackets, thus we proved the claim.
\end{proof}

\step{Time derivative of $H$}
Definition~\eqref{E:defH} of $H$ does not directly allow to differentiate $H$ in the $t$ variable, because the measure $\nu_{\ytau}$ may not be absolutely continuous.
Nevertheless, this is possible from~\eqref{E:defH1}, obtaining that $\Dt H$ is a Radon measure.
\begin{claim}
For every test function $\varPhi(t,\ytau)\in C^{1}_{\rc}(\R^{+}\times\R)$ and for $\Ll^{1}$-a.e.~$t$ one has~\eqref{E:derivataperparti}
\begin{align}
\ddt \int \varPhi (t,\ytau)H(t,\ytau)d\ytau   
&=\int \varPhi(t,\ytau) \Dif f'(U_{t}^{\reu}(d\ytau))+\int \pat{\varPhi(t,\ytau)} H(t,\ytau)d\ytau 
\end{align}
\end{claim}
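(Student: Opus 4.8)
The plan is to expose the $t$-dependence hidden in $H$ through formula~\eqref{E:defH} and then to differentiate, the only genuine difficulty being the contribution of the moving integration domain $\{s\le t\}$.

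First I would introduce the measure $\mu:=\Dytau f'(U)$ on $\R^{+}\times\R$, which is a finite Radon measure because $U\in\BV_{\loc}$, $f'\in C^{1}$ (Vol'pert) and everything is compactly supported. By~\eqref{E:disintdotchi} and~\eqref{E:alphabeta} it has the $\ytau$-disintegration $\mu(ds,d\ytau)=f''(U(s,\ytau))\,\nu_{\ytau}(ds)\,\vartheta(\ytau)\,d\ytau$, so that~\eqref{E:defH} becomes $H(t,\ytau)-\beta(\ytau)=\vartheta(\ytau)\int_{\{s\le t\}}f''(U(s,\ytau))\,\nu_{\ytau}(ds)$. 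Substituting into $F(t):=\int\varPhi(t,\ytau)H(t,\ytau)\,d\ytau$ and applying Fubini I would get
\[
F(t)=\int\varPhi(t,\ytau)\beta(\ytau)\,d\ytau+\iint_{\{0\le s\le t\}}\varPhi(t,\ytau)\,\mu(ds,d\ytau).
\]

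The second, crucial ingredient is the complementary $t$-disintegration of the very same measure, furnished by the slicing theory of $\BV$ functions exactly as in the first equality of~\eqref{E:disintdtauu} applied to $f'(U)$, namely $\mu(ds,d\ytau)=\int\Dif f'(U_{s}^{\reu})(d\ytau)\,ds$. I would then evaluate the incremental ratio $\tfrac1h\bigl(F(t+h)-F(t)\bigr)$ by separating the increment of $\varPhi$ from the increment $\{t<s\le t+h\}$ of the domain. By dominated convergence the first contribution converges to $\int\pat{\varPhi(t,\ytau)}H(t,\ytau)\,d\ytau$, using that $\int_{\{s\le t\}}\pat{\varPhi}\,\mu=\int\pat{\varPhi(t,\ytau)}\bigl(H(t,\ytau)-\beta(\ytau)\bigr)\,d\ytau$ and that the $\beta$-term contributes $\int\pat{\varPhi(t,\ytau)}\beta(\ytau)\,d\ytau$. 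For the moving-domain contribution the $t$-disintegration gives
\[
\frac1h\iint_{\{t<s\le t+h\}}\varPhi(t+h,\ytau)\,\mu=\frac1h\int_{t}^{t+h}\!\Bigl(\int\varPhi(t+h,\ytau)\,\Dif f'(U_{s}^{\reu})(d\ytau)\Bigr)\,ds,
\]
which, by the Lebesgue differentiation theorem together with the uniform continuity of $\varPhi$ and $s\mapsto|\Dif f'(U_{s}^{\reu})|(\R)\in L^{1}$, tends to $\int\varPhi(t,\ytau)\,\Dif f'(U_{t}^{\reu})(d\ytau)$ for $\Ll^{1}$-a.e.~$t$. Summing the two limits produces precisely~\eqref{E:derivataperparti}.

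I expect the moving-boundary term to be the main obstacle: it is the sole reason the identity holds only for a.e.~$t$ rather than every $t$, and it is where the time-slicing of $\mu$ and the choice of Lebesgue points are indispensable. To guarantee that this pointwise derivative legitimately represents $F$, I would finally record that $F$ is absolutely continuous: the same splitting yields $|F(t_{2})-F(t_{1})|\le C\,|t_{2}-t_{1}|+\int_{t_{1}}^{t_{2}}|\Dif f'(U_{s}^{\reu})|(\R)\,ds$, and $\pi_{\#}|\mu|\ll\Ll^{1}$ makes the right-hand side absolutely continuous in the endpoints, so that $\ddt F$ is indeed given by~\eqref{E:derivataperparti}.
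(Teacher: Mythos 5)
Your proposal is correct and follows essentially the same route as the paper: both evaluate the incremental ratio of $\int\varPhi H\,d\ytau$ by splitting it into the increment of $H$ (equivalently, the contribution of the moving domain $\{t<s\le t+h\}$ of the measure $\Dytau f'(U)$) and the increment of $\varPhi$, identifying the first limit through the disintegrations~\eqref{E:disintdotchi}, \eqref{E:alphabeta} and the definition~\eqref{E:defH} of $H$. Your explicit justification of the a.e.~limit via the $t$-slicing of $\Dytau f'(U)$ and Lebesgue points makes precise what the paper compresses into ``weak continuity of $t\mapsto\Dif f'(U_{t}^{\reu})$'', and your closing observation that $t\mapsto\int\varPhi H\,d\ytau$ is absolutely continuous is a harmless (indeed useful) addition in view of the subsequent integration by parts in time.
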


\begin{proof}[Proof of Claim~\ref{E:derivataperparti}]
Consider the limit of the incremental ratios.
Integrate by parts in $\ytau$ before the limit, take then the limit in $h$ and integrate by parts again in $y$. By the 
weak continuity of
\[
t\mapsto \Dif f'(U_{t}^{\reu}(d\ytau))
\]
one has
\begin{subequations}
\label{E:accessssorie}
\begin{align}
\int \varPhi(t,\ytau) \Dif f'(U_{t}^{\reu}(d\ytau))&=\lim_{h\downarrow0}\frac{1}{h}\left\{\int \int_{t}^{t+h}\varPhi(t,\ytau)\Dytau f'(U(ds,d\ytau))\right\}.
\end{align}
Remembering \eqref{E:alphabeta}, \eqref{E:disintdotchi} and then the definition~\eqref{E:defH} of $H$ one has
\begin{align}
\int \int_{t}^{t+h}\varPhi(t,\ytau)\Dytau f'(U(ds,d\ytau))
&=\int \varPhi(t,\ytau)\left[\int_{t}^{t+h}f''(U(s,\ytau))\nu_{\ytau}(ds)\vartheta(\ytau)\right]d\ytau
\\&=\int \varPhi(t,\ytau)\left[H(t+h,\ytau)-H(t,\ytau)\right]d\ytau.
\end{align}
\end{subequations}
Owing to~\eqref{E:accessssorie} one can deduce that for $\Ll^{1}$-a.e.~$t$ equation~\eqref{E:derivataperparti} holds:%
\begin{align*}
\ddt& \int \varPhi (t,\ytau)H(t,\ytau)d\ytau =  \lim_{h\to 0}\frac{1}{h}\left\{\int \varPhi (t+h,\ytau)H(t+h,\ytau)d\ytau -\int \varPhi (t,\ytau)H(t,\ytau)d\ytau\right\}  \\
&=\lim_{h\to 0}\frac{1}{h}\left\{\int \varPhi(t,\ytau)\left[H(t+h,\ytau)-H(t,\ytau)\right]d\ytau\right\}\\
&\qquad\qquad+\int \lim_{h\to 0}\frac{\varPhi(t+h,\ytau)-\varPhi(t,\ytau)}{h}H(t+h,\ytau)d\ytau
\\
&\stackrel{\eqref{E:accessssorie}}{=}\int \varPhi(t,\ytau) \Dif f'(U_{t}^{\reu}(d\ytau))+\int \pat{\varPhi(t,\ytau)} H(t,\ytau)d\ytau .\qedhere
\end{align*}
\end{proof}
The proof of the absolute continuity of suitable Lagrangian parameterizations is ended.
\end{proof}

\begin{proof}[Proof of Lemma~\ref{L:BVcontsol2}]
We now prove that the PDE~\eqref{E:basicPDE} holds in distributional sense.
When $\px u$ is a Radon measure, this implies by Vol'pert chain rule that $u$ has locally bounded variation.
For every test function $\varphi\in C^{\infty}_{\rc}(\Omega)$, one can apply in the integral
\[
\langle   \pt u+ \px f(u), \varphi \rangle
=
-\iint_{\Omega} \left\{ \pat{\varphi(t,x)}  u(t,x) + \pax{ \varphi(t,x)}f(u(t,x)) \right\}dtdx
\] 
the following change of variables, that one can assume absolutely continuous by Corollary~\ref{L:BVcontsol1}:
\begin{equation}
\label{E:chgvar}
\Psi\ :\ 
\begin{pmatrix}
t\\
\ytau
\end{pmatrix}
\mapsto
\begin{pmatrix}
t\\
x
\end{pmatrix}
:=
\begin{pmatrix}
t\\
\chi(t,\ytau)
\end{pmatrix}
.
\end{equation}
Denote $\varPhi(t,\ytau)=\varphi(i_{\chi(\ytau)}(t))$ and $U(t,\ytau)=u(i_{\chi(\ytau)}(t))$.
Remembering~\eqref{E:dotchiformula} one obtains
\begin{equation}
\label{E:chvarmon1}
\begin{split}
\langle  \pt u+ \px f(u), \varphi \rangle
=&
-\iint \pat{\varPhi(t,\ytau)}  U(t,\ytau) \Dytau\chi(dt,d\ytau)
\\
+\iint &f'(u(i_{\chi(\ytau)}(t)))\paytau{\varPhi(t,\ytau)}  U(t,\ytau) dtd\ytau
-\iint\paytau{ \varPhi(t,\ytau)}f(U(t,\ytau)) dtd\ytau.
\end{split}
\end{equation}

\firststep
\step{$y$-derivatives}
The last two addends in~\eqref{E:chvarmon1}, integrating by parts, are just
\begin{equation}
\label{E:lasttwoadd}
\iint \varPhi(t,\ytau)\Dytau\big[f(U)-f'(U)U\big](dt,d\ytau) .
\end{equation}
Notice that $f'(U)U$ is still a function with locally bounded variation, and that its derivative can be computed by Vol'pert chain rule: it is equal to
\[
\Dytau \left[ U f'(U) \right] = \left[f'(U)+Uf''(U)\right] \Dytau U = \Dytau f(U)+ U\Dytau f'(U).
\]
After simplifying the first term in~\eqref{E:lasttwoadd}, therefore, we find that the last two addends in~\eqref{E:chvarmon1} are
\begin{equation}
\label{E:lasttwoadd2}
-\iint \varPhi(t,\ytau) U(t,\ytau) \Dytau f'(U(dt,d\ytau)) .
\end{equation}

\step{$t$-derivative}
The first addend in~\eqref{E:chvarmon1} is more complex and requires the properties of $H$ in~\eqref{E:disintdtauchi},~\eqref{E:derivataperparti}.
Notice that $\Phi(t,y)U(t,y)$ is absolutely continuous in time. By the additional regularly in~\eqref{E:derivataperparti} of $H$ one has the integration by parts
\begin{equation}
\label{E:chvarmon32}
\begin{split}
-\iint \pat{\varPhi(t,\ytau)}  U(t,\ytau) \Dytau\chi(dt,d\ytau)
=-\iint \pat{\varPhi(t,\ytau)}  U(t,\ytau) H(t,\ytau)dtd\ytau
=
\\
\iint {\varPhi(t,\ytau)} \pat{U(t,\ytau)} \Dytau\chi(dt,d\ytau)
+\iint {\varPhi(t,\ytau)}  U(t,\ytau) \Dytau f'(U(dt,d\ytau))
\end{split}
\end{equation}
Thanks to the absolute continuity of $\chi$ that one can assume by Corollary~\ref{L:BVcontsol1}, the term $\pat{U(t,\ytau)}$ in the first integral in the RHS of~\eqref{E:chvarmon32} is just the Lagrangian source term $\gx=[\gfr]_{\chi}$ evaluated at $i_{\chi(\ytau)}(t)$. The first addend in the RHS of~\eqref{E:chvarmon32} can be thus rewritten just as
\[
\iint\varphi(t,x)\gfr(t,x)dtdx.
\]
The remining addend in the RHS of~\eqref{E:chvarmon32} instead cancels the two remaining terms in~\eqref{E:chvarmon1}, by their equivalent form~\eqref{E:lasttwoadd2}.
After the cancellation we find that~\eqref{E:chvarmon1} is just
\[
\iint \varphi(t,x) \left[ \Dt u+ \Dx f(u)\right](dt,dx)
=
\iint\varphi(t,x)\gx(t,x)dtdx.
\qedhere
\]
\end{proof}

\subsection{The case of continuous solutions: \texorpdfstring{$\BV$}{BV} approximations}
We provide in this section the proof that continuous Lagrangian solutions of the balance law~\eqref{E:basicPDE} are also distributional solutions, without assuming $\BV$-regualrity.
In order to prove it, we construct a sequence of approximations having bounded variation, so that we take advantage of \S~\ref{Ss:BVregolare}.
We omit here the correspondence of the source terms, discussed separately in \cite{file2ABC}.

\begin{lemma}
\label{L:monotoneApproxiamtion}
Let $u$ be a continuous Lagrangian solution of~\eqref{E:basicPDE} with source term bounded by $G$. 
Then there exists a sequence of continuous functions $u_{k}(t,x)$, ${k\in\N}$, which are both
\begin{itemize}
\item functions of bounded variation;
\item Lagrangian and Eulerian solutions of~\eqref{E:basicPDE} with source terms bounded by $G$;
\item converging uniformly to $u$ as $k\uparrow\infty$.
\end{itemize}
\end{lemma}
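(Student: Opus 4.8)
The plan is to construct the approximating sequence $u_k$ by mollifying the Lagrangian parameterization in the $y$-variable, so that each $u_k$ inherits $\BV$-regularity while remaining a Lagrangian solution with the same source bound. The key observation is that the obstruction to $\BV$-regularity is the possible wild oscillation of $y\mapsto\chi(t,y)$, but the monotonicity of $\chi^{\red}_t$ together with the Lipschitz-in-time bound makes the composed function $U(t,y)=u(i_{\chi(y)}(t))$ amenable to approximation. Once each $u_k$ has locally bounded variation, Lemma~\ref{L:BVcontsol2} applies and gives that $u_k$ is a distributional solution. Uniform convergence will then transfer the Eulerian formulation to the limit $u$ in the next stage of the paper.

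First I would fix the Lagrangian parameterization $\chi$ for $u$ and pass to the composed coordinates $U(t,y)=u(t,\chi(t,y))$, which by Definition~\ref{D:lagrSol} satisfies $\ddt U^{\red}_y(t)=\gx(i_{\chi(y)}(t))$ in the distributional sense, hence $t\mapsto U(t,y)$ is $G$-Lipschitz for every $y$. I would then regularize in the transversal variable: replace $U$ by $U_k(t,y):=\int U(t,y-z)\,\rho_k(z)\,dz$ for a standard mollifier $\rho_k$, or equivalently average the characteristic field over nearby $y$-values. Because convolution in $y$ commutes with the $t$-derivative and preserves the $G$-bound on $\ddt U^{\red}_y$, each $U_k$ is still $G$-Lipschitz in time; and since $U_k$ is now smooth in $y$ with $\px$-derivative controlled, the function $u_k$ defined by pushing $U_k$ back through $\chi$ will have $\partial_x u_k$ a Radon measure. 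I would verify that $u_k$ is again a Lagrangian solution, with its own parameterization (the mollified one), source bounded by $G$, invoking Lemma~\ref{L:lipcharisLagr} or the Lipschitz-along-characteristics structure to produce the parameterization. Finally, uniform convergence $u_k\to u$ follows from uniform continuity of $u$ on its compact support together with $\rho_k\to\delta_0$.

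\textbf{The main obstacle} I anticipate is ensuring that the mollified object $U_k$ genuinely corresponds, via an \emph{honest} Lagrangian parameterization, to a continuous function $u_k$ on the original $(t,x)$-plane that still solves~\eqref{E:basicPDE} with the correct characteristic ODE $\dot\gamma=f'(u_k)$. Mollifying $U$ in $y$ is not the same as mollifying $u$ in $x$, and the characteristic speed must be recomputed self-consistently: the curves of the new parameterization $\chi_k$ have to satisfy $\dot\chi_k=f'(u_k(t,\chi_k))$, not merely $\dot\chi_k=f'(U_k)$. The resolution is to work intrinsically in Lagrangian coordinates—define $u_k$ so that $u_k(i_{\chi_k(y)}(t))=U_k(t,y)$ and then check the two defining properties of a Lagrangian parameterization (that each slice is a characteristic curve of $u_k$, and that $\chi_k^{\red}_t$ is nondecreasing), the latter being automatic since convolution of a monotone family with a nonnegative kernel stays monotone. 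Establishing that the time-Lipschitz bound survives the reparameterization, and that the limit recovers $u$ rather than a smoothed relative, is the delicate point; the uniform bound $G$ on all the sources is what keeps the construction compact and lets the Eulerian identity pass to the limit.
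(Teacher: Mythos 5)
Your construction breaks down at precisely the point you flag as ``the main obstacle'', and the resolution you propose does not resolve it. After mollifying, $U_k=U\ast_y\rho_k$ and the parameterization are no longer compatible, and there are only two ways to read your definition of $u_k$, both of which fail. (a) If you keep the original $\chi$ and set $u_k(t,\chi(t,y)):=U_k(t,y)$, this is not well defined: where characteristics merge, i.e.\ on a flat part $[y_1,y_2]$ of the monotone map $\chi^{\red}_{t}$, the function $U(t,\cdot)$ is constant on $[y_1,y_2]$ but the averages satisfy $U_k(t,y_1)\neq U_k(t,y_2)$ in general; and even where it is well defined, the curves $\chi(\cdot,y)$ are no longer characteristics of $u_k$, since their slope is $f'(U(t,y))$ and not $f'(U_k(t,y))$. (b) If instead you define $\chi_k$ self-consistently by $\dot\chi_k(t,y)=f'(U_k(t,y))$, $\chi_k(0,y)=\chi(0,y)$, then the monotonicity of $y\mapsto\chi_k(t,y)$ is \emph{not} automatic: your argument ``convolution of a monotone family with a nonnegative kernel stays monotone'' applies to $\chi\ast_y\rho_k$, not to $\chi_k$, and the two differ because convolution does not commute with the nonlinear map $U\mapsto f'(U)$. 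Distinct curves $\chi_k(\cdot,y_1)$, $\chi_k(\cdot,y_2)$ can cross, and at a crossing point they carry the distinct values $U_k(t,y_1)\neq U_k(t,y_2)$, so no single-valued function $u_k$ with $u_k\circ i_{\chi_k}=U_k$ exists at all: this is exactly the multivalued-solution phenomenon of the method of characteristics. The nonnegativity $\Dytau\chi\geq0$ (cf.~\eqref{E:defH1}) is a structural consequence of the fact that $U$ and $\chi$ come from an honest function $u$ on the plane; averaging $U$ in $y$ destroys it, and nothing in the mollification restores it.

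The paper's proof avoids this trap by never modifying the parameterization: it modifies $u$ itself through order-theoretic truncations. One decomposes the plane into thin strips bounded by characteristics $\chi(\cdot,y_i)$, and inside each strip one iteratively replaces $u$ by its truncation from above/below at the values of $u$ along chosen characteristics; Claim~\ref{C:basiccut} shows each such cut is again a Lagrangian solution with source bounded by $G$. The limit of the cuts is monotone in $x$ on each time-slice of the strip, hence of bounded variation by slicing, coincides with $u$ on the strip boundaries, and lies within $\omega(\delta)$ of $u$, so the strip functions converge uniformly to $u$ as the strips shrink; Lemma~\ref{L:BVcontsol2} then upgrades each $\BV$ approximation to an Eulerian solution. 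Truncations manifestly produce single-valued continuous functions on the $(t,x)$-plane, which is exactly the property your mollification in Lagrangian coordinates cannot guarantee. To salvage your approach you would need to prove that crossing does not occur, i.e.\ that $\Dytau\chi_k\geq0$ is preserved under mollification; for a general non-convex $f\in C^{2}$ and a merely continuous $u$ this is false, and it is not a technical refinement but the heart of the lemma.
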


\begin{corollary}
\label{C:lagrnodiss}
Let $u$ be a continuous Lagrangian solution of~\eqref{E:basicPDE}. 
Then it is a continuous distributional solution of~\eqref{E:basicPDE} and it does not dissipate entropy.
\end{corollary}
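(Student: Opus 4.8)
The plan is to prove Corollary~\ref{C:lagrnodiss} by reducing the general continuous case to the $\BV$ case already handled in \S~\ref{Ss:BVregolare}, via the approximation sequence provided by Lemma~\ref{L:monotoneApproxiamtion}. First I would invoke Lemma~\ref{L:monotoneApproxiamtion} to obtain functions $u_{k}$ of bounded variation which are simultaneously Lagrangian and Eulerian solutions of~\eqref{E:basicPDE}, with source terms bounded uniformly by $G=\norm{\gx}_{\infty}$, converging uniformly to $u$. For each fixed $k$, being a $\BV$ Lagrangian solution, Lemma~\ref{L:BVcontsol2} (whose hypotheses are met because $\px u_{k}$ is a Radon measure) identifies a Borel source $\gfr_{k}$ with $|\gfr_{k}|\leq G$ such that
\[
\iint_{\Omega}\left\{\pat{\varphi}\,u_{k}+\pax{\varphi}\,f(u_{k})\right\}dtdx=-\iint_{\Omega}\varphi\,\gfr_{k}\,dtdx
\qquad\forall\varphi\in C^{\infty}_{\rc}(\Omega).
\]

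The core step is then passing to the limit $k\uparrow\infty$ in this weak formulation. Since $u_{k}\to u$ uniformly on the compact support and $f\in C^{2}$, we get $f(u_{k})\to f(u)$ uniformly, so the left-hand side converges to $\iint_{\Omega}\{\pat{\varphi}\,u+\pax{\varphi}\,f(u)\}$. On the right-hand side the bound $|\gfr_{k}|\leq G$ makes the sequence $\gfr_{k}$ bounded in $L^{\infty}(\Omega)$, hence weak-$*$ precompact; extracting a subsequence $\gfr_{k}\stackrel{*}{\rightharpoonup}\gd$ with $|\gd|\leq G$ in $\distrBdd(\Omega)$, the right-hand side converges to $-\iint_{\Omega}\varphi\,\gd$. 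This yields that $u$ is a continuous distributional solution of~\eqref{E:basicPDE} with a bounded source $\gd$, establishing the first assertion. (I would not claim $\gd$ is compatible with the given Lagrangian source $\gx$; as the text stresses, that correspondence is deferred to~\cite{file2ABC}.)

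For the entropy statement, the point is that the no-dissipation property should survive the limit because each $u_{k}$ does not dissipate entropy. Continuous distributional solutions satisfy an \emph{entropy equality} rather than merely an inequality: for any $C^{1}$ entropy--flux pair $(\eta,q)$ with $q'=\eta'f'$, applying the chain rule to the continuous $\BV$ solution $u_{k}$ gives $\pt\eta(u_{k})+\px q(u_{k})=\eta'(u_{k})\,\gfr_{k}$ with no singular (shock) contribution, precisely because $u_{k}$ is continuous and no jump set carries mass. Passing to the limit exactly as above—using uniform convergence of $\eta(u_{k})\to\eta(u)$, $q(u_{k})\to q(u)$ and weak-$*$ convergence of $\eta'(u_{k})\gfr_{k}$—transfers the equality to $u$, so that $u$ dissipates no entropy.

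The main obstacle I anticipate is the treatment of the source on the right-hand side of the entropy identity: one must check that the products $\eta'(u_{k})\,\gfr_{k}$ pass to the limit to the expected object and that the limiting entropy balance is genuinely an equality with no defect measure appearing. The weak-$*$ limit $\gd$ of $\gfr_{k}$ is only determined as a distribution, so some care is needed to express the limiting entropy source consistently and to rule out a concentrating singular part; the uniform $L^{\infty}$ bound together with the strong (uniform) convergence of $\eta'(u_{k})$ is what makes the product converge in the distributional pairing, and the continuity of each $u_{k}$ is what prevents any Rankine--Hugoniot/entropy-dissipation term from surviving. Verifying that these ingredients combine to give equality—rather than the one-sided Kruzkov inequality typical of discontinuous solutions—is the delicate part of the argument.
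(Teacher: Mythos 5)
Your proposal is correct and follows essentially the same route as the paper: both use the $\BV$ approximants of Lemma~\ref{L:monotoneApproxiamtion}, identify their bounded sources via Lemma~\ref{L:BVcontsol2}, apply Vol'pert's chain rule (exact, with no jump part, since each $u_k$ is continuous) to get the entropy equality at level $k$, and pass to the limit by combining uniform convergence of $u_k$, $\eta'(u_k)$ with Banach--Alaoglu weak-$*$ compactness of the uniformly bounded sources. The paper treats the distributional statement and the entropy equality in one stroke (the former is the case $\eta(z)=z$), whereas you separate them, but this is only a presentational difference.
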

{The above corollary states in particular that $u$ is the unique Kruzkov entropy solution to the Cauchy problem (when its distributional source term is assigned). We mention that in the case of the quadratic flux this statement can be derived by~\cite{Pinamonti}: the authors provide a smooth approximation for which also the source term is converging in $\Ll^{1}(\R^{2})$, refining a construction in~\cite{MV} which extends to the Heisenberg group a technique originally introduced for the Euclidean setting by~\cite{DGCP}.
The construction we adopt here is more direct but rougher: sources do not converge.
}

\begin{proof}[Proof of Corollary~\ref{C:lagrnodiss}]
We exploit the approximation $\{u_{k}(t,x)\}_{k\in\N}$ given in Lemma~\ref{L:monotoneApproxiamtion}.
Consider any entropy-entropy flux pair $\eta,q\in C^{1}(\R)$, that is $q'(z)=\eta'(z)f'(z)$.
As each $u_{k}$ is a function of bounded variation, by Vol'pert chain rule 
\[
\Dt\eta(u_{k}) + \Dx (q(u_{k})) 
=
\eta'(u_{k})\left(\Dt u_{k} + \Dx f(u_{k})\right) 
=
\eta'(u_{k})\gd_{k} \ ,
\]
where we set $\gd_{k}=\Dt u_{k} + \Dx f(u_{k})$. Owing to Lemma~\ref{L:BVcontsol2}, each $\gd_{k}$ is given by a function which is bounded by the constant $G$ in the assumption of the present corollary.
Since $u_{k}$ converges uniformly to $u$, one has
\[
\eta'(u_{k})\gd_{k}=\pt \eta(u_{k}) + \px (q(u_{k}))\quad \to\quad\pt\eta(u) + \px (q(u))\quad
\text{in $\mathcal D'(\Omega)$.}
\]
Being the sequence $\{\gd_{k}\}_{k\in\N}$ uniformly bounded, Banach-Alaoglu theorem implies that there there exists a subsequence $w^{*}$-converging to some function $\gd\in\distrBdd(\Omega)$, $\norm{\gd}_{\distrBdd(\Omega)}\leq G$: then necessarily
\[
\pt\eta(u) + \px (q(u)) 
=
\eta'(u)\gd .
\]

The Eulerian source $\gd$ and the Lagrangian source $\gx$ can be identified also in the limit under uniform convexity assumptions on the f{}lux, see \cite[\S~2.2]{file2ABC}. Under the negligibility assumption on the inf{}lection points of $f$~\eqref{hyp:H}, they are just compatible: see \cite[\S~3 and \S~4.2]{file2ABC} .
\end{proof}

\begin{proof}[Proof of Lemma~\ref{L:monotoneApproxiamtion}]
We construct an approximation of $u$ by a patching procedure. One needs first to construct the approximation on a patch, which is a strip delimited by two characteristics. In it, we require that at each fixed time the approximating function is monotone in $x$, and it coincides with $u$ on the boundary of the strip.
This allows to work with continuous functions having bounded variation.
Repeating the construction in adjacent strips, when they get thinner the approximating functions converge to $u$ uniformly.

We recall that $u$ can be assumed compactly supported, see Notation~\ref{N:notationcompactsupp}.

We expose first the limiting procedure for constructing a monotone approximation within each strip, and then a second limiting procedure for converging to $u$ when strips become thinner.
In the first step we describe the second limiting procedure, which is simpler, while from the second step on we describe how to provide the monotone approximations.

\firststep
\step{Patches decomposition}
Fix two characteristics $\chi_{y_{1}}^{\reu}(t)$, $\chi_{y_{2}}^{\reu}(t)$ and define the strip
\begin{equation}
\label{E:strip}
S_{y_{1}y_{2}}
=
\left\{
(t,x)\in\R^{+}\times\R\ : \ 
\chi(t,y_{1})\leq x \leq\chi(t,y_{2})
\right\} 
\qquad\text{where $y_{1}\leq y_{2}$}.
\end{equation}
If one choses for example $y_{i}=i\delta$ for $i\in\Z$ and some $\delta>0$, then one has the decomposition
\[
\R^{+}\times\R = \cup_{i\in\Z} S_{y_{i}y_{i+1}}
.
\]
Let $|y_{i+1}-y_{i}|\leq \delta$.
We construct in the next steps continuous functions $u^{\delta}$ which are
\begin{enumerate}
\item \label{item:lagrangian}Lagrangian solutions, with a new Lagrangian source still bounded by $G$;
\item \label{item:boundary}equal to $u$ on the curves $\chi_{y_{i}}^{\reu}(t)$, $i\in\Z$;
\item \label{item:mon1}nondecresing in the $x$-variable in each open $t$-section of the set \[\{(t,x):\ \chi_{y_{i}}^{\reu}(t)<x<\chi_{y_{i+1}}^{\reu}(t),\ u(i_{\chi(y_{i})}(t))\leq u(i_{\chi(y_{i+1})}(t))\};\]
\item \label{item:mon2}nonincreasing in the $x$-variable in each open $t$-section of the set \[\{(t,x):\ \chi_{y_{i}}^{\reu}(t)<x<\chi_{y_{i+1}}^{\reu}(t),\ u(i_{\chi(y_{i})}(t))\geq u(i_{\chi(y_{i+1})}(t))\};\]
\item \label{item:distance}$\norm{u^{\delta}-u}_{\distrBdd}\leq \omega(\delta)$, where $\omega(\delta)$ is a $\delta$-modulus of uniform continuity of \[U(t,y)=u(i_{\chi_{y}^{\reu}}(t)).\]
\end{enumerate}
From the monotonicity properties~\eqref{item:mon1}-\eqref{item:mon2}, if we apply the slicing theory for $\BV$-functions we notice that the functions $u^{\delta}$ have locally bounded variation.
Owing to Lemma~\ref{L:BVcontsol2} they are also Eulerian solutions with source terms which are uniformly bounded by $G$, the uniform bound for the Lagrangian sources owing to~\eqref{item:lagrangian}. By the uniform estimates~\eqref{item:distance} and the constrain~\eqref{item:boundary} on the boundary of the strips, they converge uniformly to $u$ as $\delta\downarrow0$, proving the thesis.

\step{Monotone modification within a patch}
We start the iterative procedure for constructing the approximations $u^{\delta}$ having bounded variation, that we describe within a fixed strip 
\begin{equation}
S_{y_{1}y_{2}}
=
\left\{
(t,x)\ : \ 
\chi(t,y_{1})< x< \chi(t,y_{2})
\right\} .
\end{equation}
We modify in $S_{y_{1}y_{2}}$ the given Lagrangian continuous solution in order to get a new continuous function which is still a Lagrangian solution, for a different source which is still bounded by $G$.
The additional property that we are trying to get, piecewise, is a monotonicity in the $x$ variable when $t$ is fixed: we fix the values on the boundary curves $\chi_{y_{1}}^{\reu}(t)$, $\chi_{y_{2}}^{\reu}(t)$; inside the stripe $S_{y_{1}y_{2}}$ we aim at substituting at each time $u^{\reu}_{t}(x)$ with a function $\tilde u^{\reu}_{t}(x)$ i) which is monotone in the $x$ variable with values from $u(i_{\chi_{(y_{1})}}(t))$ to $ u(i_{\chi(y_{2})}(t))$ and ii) which is still a Lagrangian solution with source term bounded by $G$.
This new function $\tilde u$ is now defined inside the stripe $S_{y_{1}y_{2}}$ with a limiting procedure pictured in Figure~\ref{fig:tagli} below.
\begin{figure}[ht]
\centering
\includegraphics[width=.43\linewidth, height=.2\linewidth]{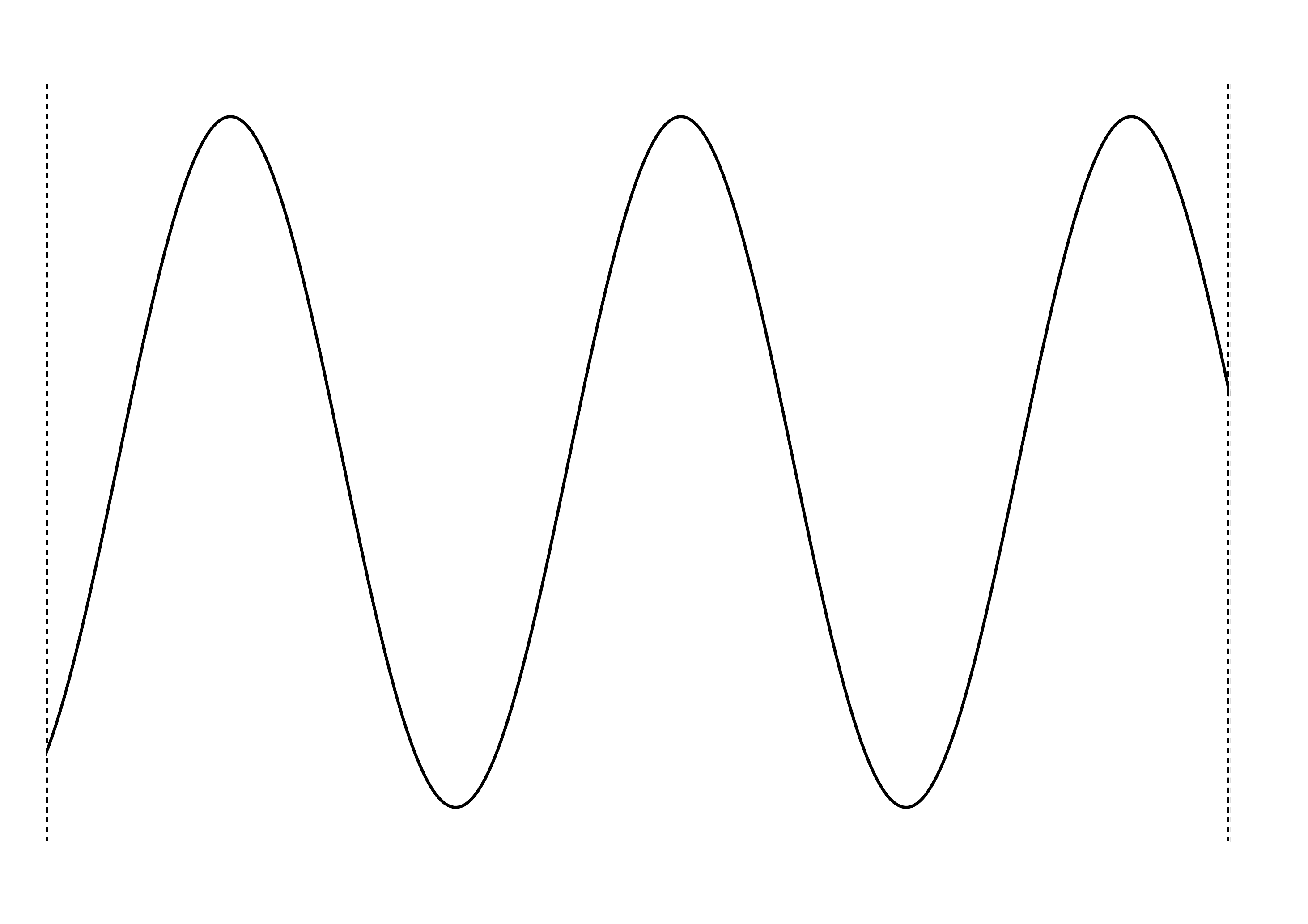}
\hspace{.1\linewidth}
\includegraphics[width=.43\linewidth,height=.2\linewidth]{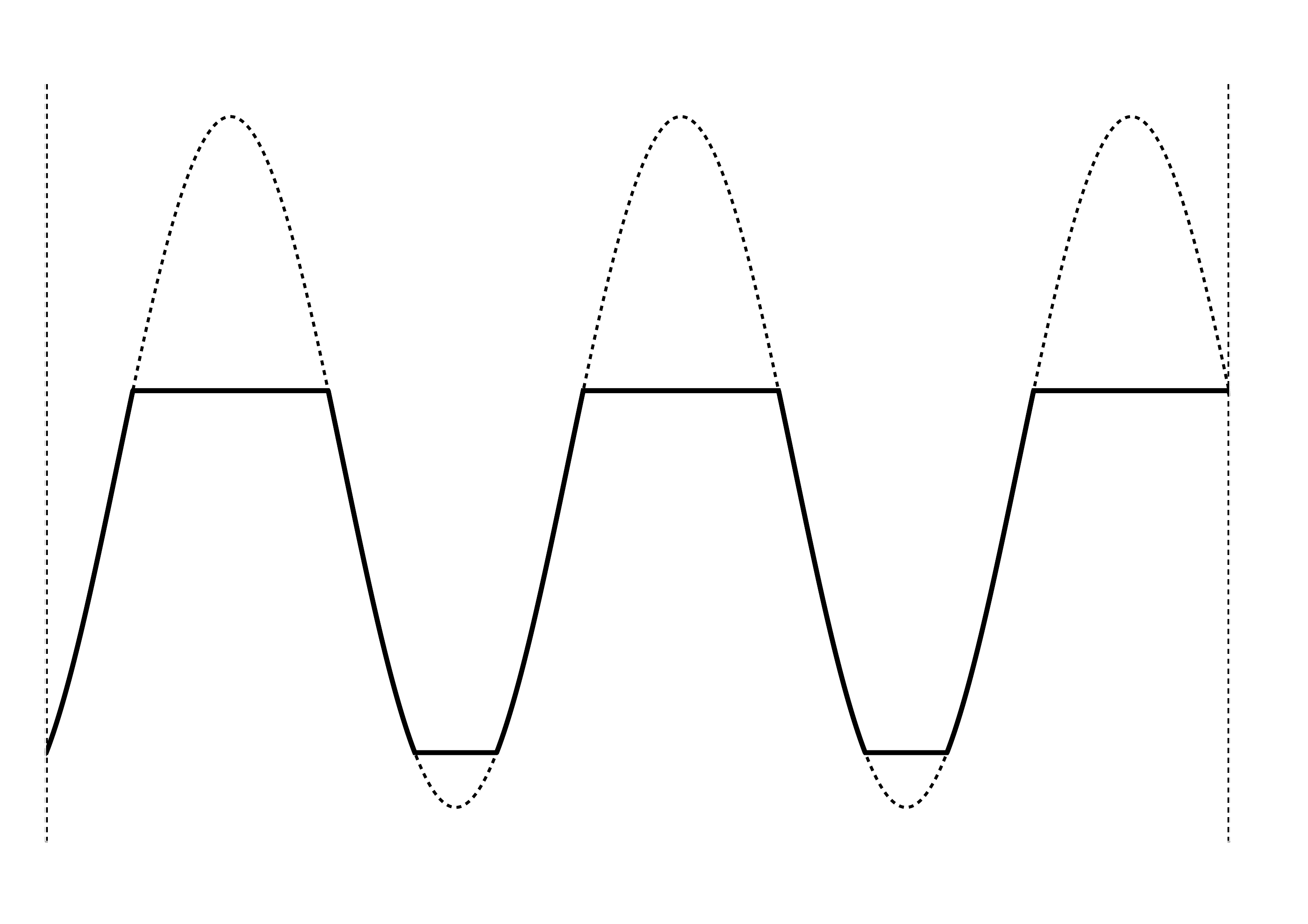}\\
\includegraphics[width=.43\linewidth,height=.2\linewidth]{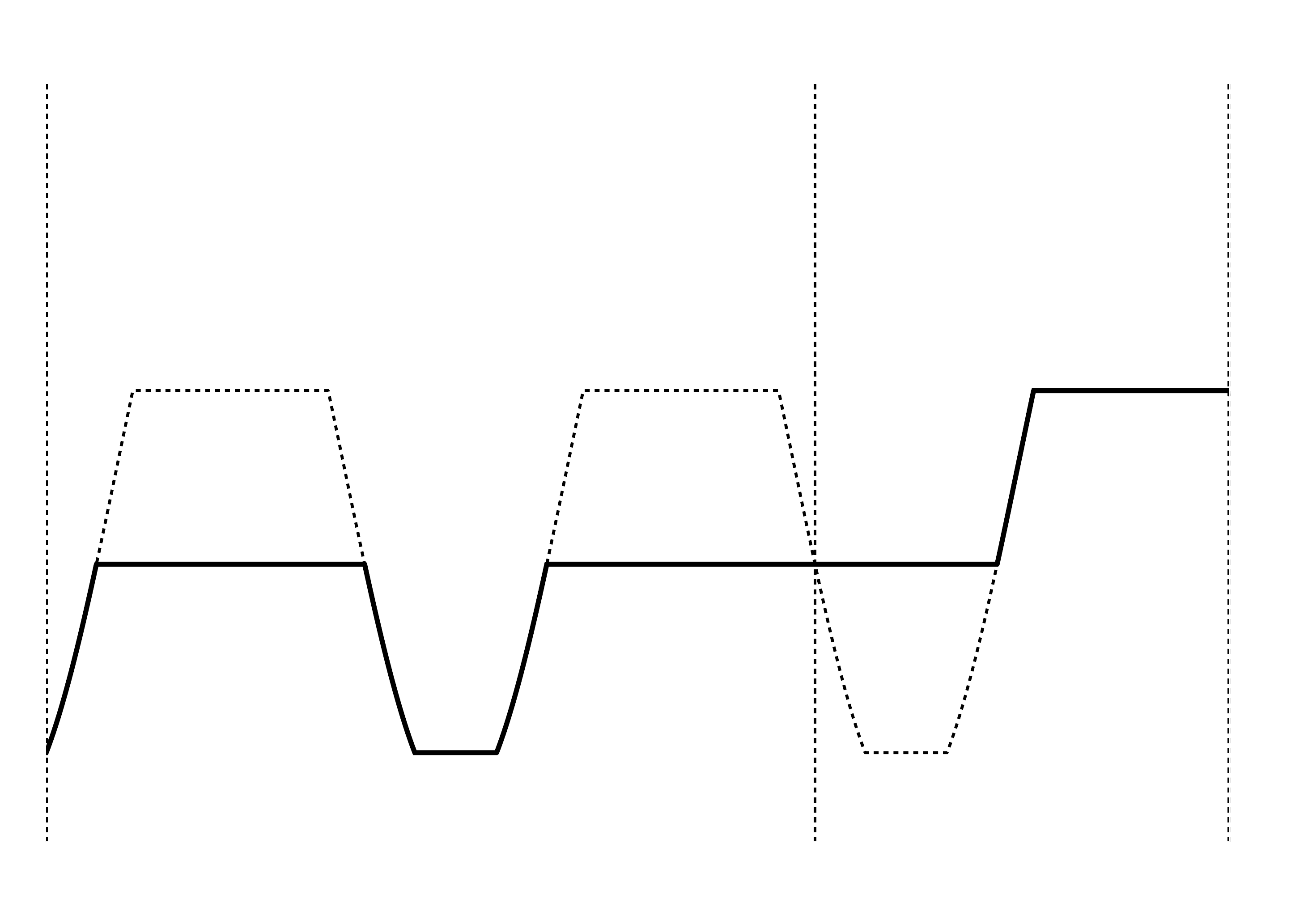}
\hspace{.1\linewidth}
\includegraphics[width=.43\linewidth,height=.2\linewidth]{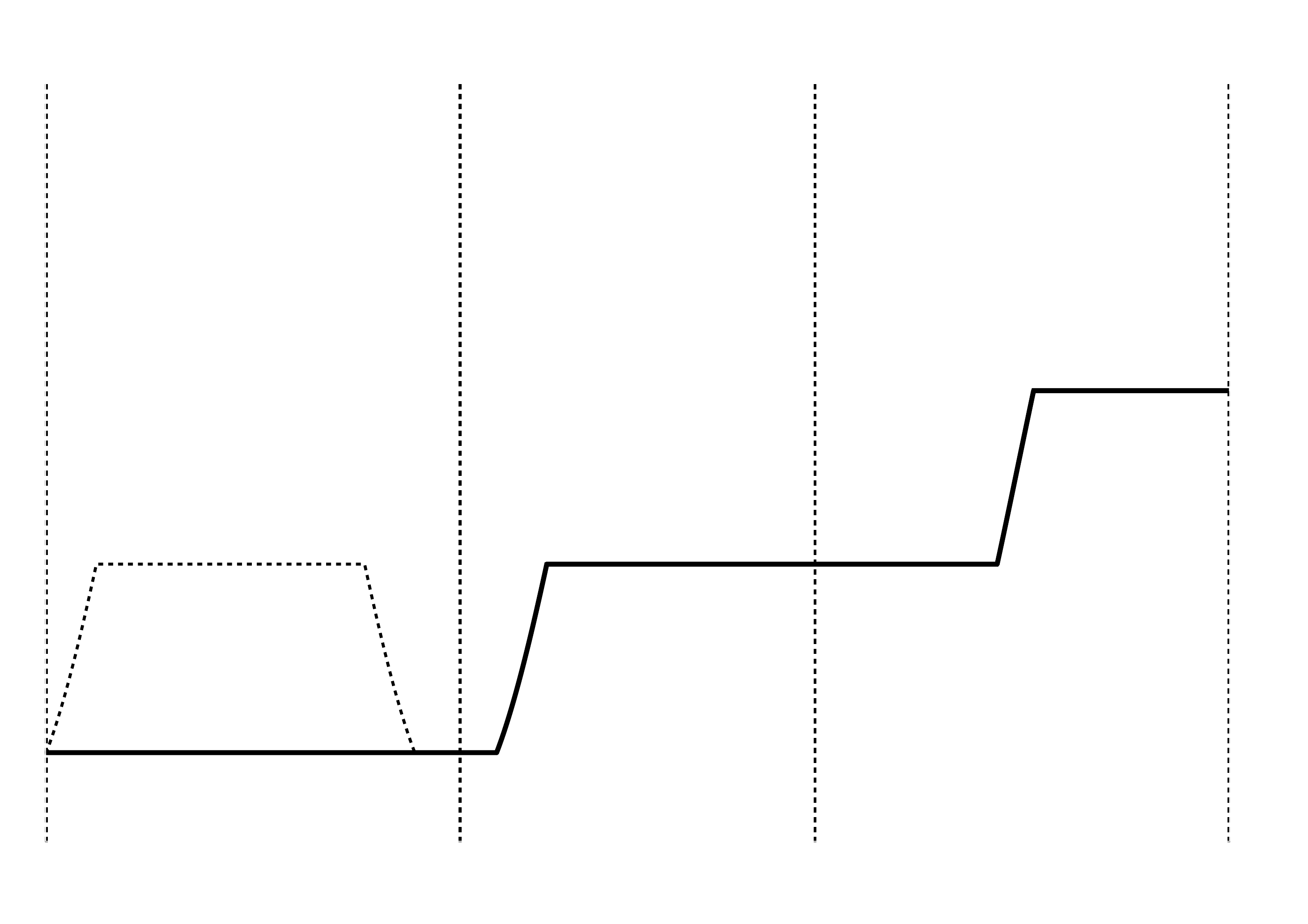}\
\caption{First steps of the iterative procedure: the bold line is the approximating function obtained by successive cuttings reaching monotonicity properties}
\label{fig:tagli}
\end{figure}

Let $\{(t_{j}, x_{j})\}_{j\in\N}$ be a dense sequence of points within the strip $\{S_{y_{1}y_{2}}\}$.
The function $\tilde u$ is defined within the strip $\{S_{y_{1}y_{2}}\}$ as a uniform limit of functions $\tilde u_{j}$, for $j\in\N$, which we assign now recursively.
We first state the basic operation that we will perform.

\begin{claim}[Basic cut]
\label{C:basiccut}
Suppose $u$ is a Lagrangian solution of~\eqref{E:basicPDE} with source term bounded by $G$ and fix a characteristic curve $\bar \gamma=\chi(t,\bar y)$, where $\chi$ is a Lagrangian parameterization of $u$. Then the two truncated functions, respectively from above and from below,
\[
 u_{M}(t,x):= u(t,x)\wedge u(i_{\bar\gamma}(t)),
\qquad  u_{m}(t,x):=u(i_{\bar\gamma}(t))\vee u(t,x)
\] 
are still Lagrangian solutions of~\eqref{E:basicPDE} with source term bounded by $G$.
\end{claim}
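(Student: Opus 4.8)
The plan is to treat the upper truncation $u_{M}$ in detail; the lower truncation $u_{m}$ is entirely symmetric (exchange $\wedge$ with $\vee$ and the region $\{u>m\}$ with $\{u<m\}$). Write $m(t):=u(i_{\bar\gamma}(t))$ for the value of $u$ along the fixed characteristic, so that $u_{M}(t,x)=u(t,x)\wedge m(t)$. The first observation is that $m$ is $G$-Lipschitz: since $u$ is a Lagrangian solution and $\bar\gamma=\chi(\cdot,\bar y)$ is one of the characteristics of its parameterization, Definition~\ref{D:lagrSol} gives $\ddt m(t)=\gx(i_{\bar\gamma}(t))$ with $|\gx|\leq G$, so $m$ is Lipschitz with constant $G$. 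Consequently the constant-in-space field $(t,x)\mapsto m(t)$ is itself a Lagrangian solution, parameterized explicitly by the parallel curves $t\mapsto c+\int_{0}^{t}f'(m(s))\,ds$, all of speed $f'(m(t))$ and with source $m'(t)$ bounded by $G$.

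The core step is to build a Lagrangian parameterization $\chi_{M}$ for $u_{M}$ whose characteristics are arcs of the original characteristics $\chi(\cdot,y)$ wherever $u<m$, and arcs of the parallel curves of speed $f'(m(t))$ wherever $u\geq m$. This gluing is consistent precisely because at any point where $u=m$ one has $f'(u)=f'(m)$, so the two families are tangent there: each glued curve is then $C^{1}$ and has speed $f'(u)$ on $\{u<m\}$ and $f'(m)$ on $\{u\geq m\}$, which in both cases equals $f'(u_{M}(t,\gamma(t)))$; hence it is a genuine characteristic of $u_{M}$. Since the original $\chi_{t}^{\red}$ is nondecreasing and the parallel flow is order preserving (all its curves share the speed $f'(m(t))$ and keep a constant gap), the glued family inherits monotonicity in $y$, and surjectivity and continuity likewise pass to $\chi_{M}$.

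Granting such a $\chi_{M}$, the source bound is immediate. Along each characteristic $\gamma_{M}=\chi_{M}(\cdot,y)$ set $v(t)=u_{M}(i_{\gamma_{M}}(t))$. On the time-set where $u<m$ the curve $\gamma_{M}$ coincides with an arc of some $\chi(\cdot,y')$, so $v=u$ there and $\ddt v=\gx(i_{\gamma_{M}}(t))$ with $|v'|\leq G$; on the set where $u\geq m$ one has $v=m$ and $|v'|=|m'|\leq G$. As $v$ is continuous with almost-everywhere derivative bounded by $G$, it is $G$-Lipschitz, hence $\ddt v$ is a function bounded by $G$ in the distributional sense. Therefore $u_{M}$ is a Lagrangian solution of~\eqref{E:basicPDE} with source bounded by $G$ (equivalently, once the uniform bound along $\chi_{M}$ is recorded one may quote Lemma~\ref{L:hogdipdatx}).

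The main obstacle is the construction in the second paragraph: making $\chi_{M}$ a bona fide Lagrangian parameterization when $u$ crosses the level $m(t)$ many times. Two points need care. First, the untruncated arcs of $\chi_{M}$ must be arcs of the \emph{given} family $\chi(\cdot,y)$ and not of arbitrary characteristics of $u$, because the Lagrangian source $\gx\in\pwx(\Omega)$ is only controlled along that family; at each re-entry into $\{u<m\}$ one must latch onto a characteristic of $\chi$ through the re-entry point, which exists by surjectivity of $\chi_{t}^{\red}$, and these choices must be made coherently in $y$ so as not to break the ordering. Second, one must verify that the gluing yields a globally monotone, surjective and continuous $\chi_{M}$ despite the possibly intricate geometry of $\{u>m\}$; here the tangency $f'(u)=f'(m)$ on $\{u=m\}$ is exactly what prevents the order of characteristics from being destroyed at the crossings.
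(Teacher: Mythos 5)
Your plan is built from the same ingredients as the paper's proof: the two families of curves (arcs of the given parameterization $\chi$ where the truncation is inactive, translates of $\bar\gamma$, i.e.\ curves of speed $f'(m(t))$, where it is active), the tangency observation that $f'(u)=f'(m)$ on the contact set, and the final reduction to Lemmas~\ref{L:lipcharisLagr} and~\ref{L:hogdipdatx}. But the step you yourself call ``the main obstacle'' is exactly where the proof lives, and as described it does not go through. Your glued curve is defined recursively: follow $\chi(\cdot,y)$ while $u<m$, switch to a translate of $\bar\gamma$ upon contact, latch back onto some $\chi(\cdot,y')$ at the exit point, and so on. This recursion presupposes that the switching times form a locally finite (or at least well-ordered with successor steps of positive length) set. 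The contact set $\{t:\ u(i_{\gamma}(t))=m(t)\}$ is merely closed: exits and re-entries can accumulate, e.g.\ on a Cantor-like set, and then the recursion never progresses past an accumulation point, so you have not produced a curve through an arbitrary point at all. The tangency remark guarantees $C^{1}$-compatibility at each individual contact point but does not define the curve globally; asserting that the gluing ``needs care'' is not a substitute for a mechanism that handles accumulating crossings.

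The paper's proof supplies precisely that mechanism, and it is the idea missing from your proposal. It never glues recursively and never constructs a full parameterization for the truncated function directly. Instead, for each point it forms the set $\Gamma$ of \emph{$C^{1}$-limits} of curves consisting of \emph{finitely many} arcs of the two types; $u_{m}$ is $G$-Lipschitz along every such finite concatenation (each piece is either $u$ along a curve of $\chi$, truncated by the $G$-Lipschitz function $m$, or $m$ itself), and this bound survives the closure. Then it selects the \emph{extremal} element of $\Gamma$ (pointwise maximal forward in time, minimal backward) and proves by a maximality argument that this extremal curve is a genuine characteristic of the truncated function: if its slope were ever $f'(u)$ strictly below $f'(u_{m})$, one could switch onto a translate of $\bar\gamma$ and strictly exceed it, a contradiction. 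This one selection argument absorbs all the pathology of the contact set, and it also makes your second worry (coherence of the choices in $y$) moot: Lemma~\ref{L:lipcharisLagr} only requires \emph{one} Lipschitz characteristic through each point of a dense set, never a globally coherent family. A further, smaller, flaw: your final Lipschitz claim invokes the false principle that a continuous function with a.e.\ derivative bounded by $G$ is $G$-Lipschitz (the Cantor function is the standard counterexample); in your setting the conclusion does hold, but it must be argued from the structure of $v$ — it agrees with the $G$-Lipschitz function $m$ on a closed set and with $G$-Lipschitz restrictions of $u$ on the closures of the complementary intervals, with matching values at the endpoints — not from the a.e.\ derivative bound.
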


We postpone the proof of Claim~\ref{C:basiccut} to Page~\pageref{proof:basiccut}. The basic cut allows the iterative procedure:
\begin{itemize}
\item Set $m(t):=\min\{u(i_{\chi_{(y_{1})}}(t)), u(i_{\chi_{(y_{2})}}(t))\}$ and $M(t):=\max\{u(i_{\chi_{(y_{1})}}(t)), u(i_{\chi_{(y_{2})}}(t))\}$.
\item Set $\tilde u_{0}(t,x):= u(t,x)$ for $(t,x)\in S_{y_{1}y_{2}}$ and fix $\bar\gamma_{0}(t):=\chi(t,y_{1}(t))$. 
\item Let $j\in\N$. Set $v_{j}(t)=u(i_{\bar \gamma_{j-1}}(t))$ and define the truncated function
\[ 
\tilde u_{j}(t,x)
:=
\begin{cases}
v_{j}(t)\vee \tilde u_{j-1}(t,x) \wedge M(t) &\text{if $u(i_{\chi_{(y_{1})}}(t))=M(t)$ and $\chi(t,y_{1})\leq x\leq \bar\gamma_{j-1}(t)$} ,\\
m(t)\vee \tilde u_{j-1}(t,x)\wedge v_{j}(t)  &\text{if $u(i_{\chi_{(y_{1})}}(t))=m(t)$ and $\chi(t,y_{1})\leq x\leq \bar\gamma_{j-1}(t)$} ,\\
 v_{j}(t)\vee \tilde u_{j-1}(t,x) \wedge M(t) &\text{if $u(i_{\chi_{(y_{2})}}(t))=M(t)$ and $\bar\gamma_{j-1}(t)\leq x\leq \chi(t,y_{2})$} ,\\
m(t)\vee \tilde u_{j-1}(t,x)\wedge v_{j}(t)  &\text{if $u(i_{\chi_{(y_{1})}}(t))=m(t)$ and $\bar\gamma_{j-1}(t)\leq x\leq \chi(t,y_{2})$} .
\end{cases} 
\] 
Basically, the strip $S_{y_{1}y_{2}}$ is divided by $\bar\gamma_{j-1}$ into two sub-strips and in each sub-strip $\tilde u_{j-1}$ is truncated from above and from below by the two boundary values on the sub-strips. 
Owing to Claim~\ref{C:basiccut} the function $\tilde u_{j}$ is a Lagrangian solution of~\eqref{E:basicPDE} with source term bounded by $G$. 
By the definition of Lagrangian solution one can fix $\bar\gamma_{j}$ as
\begin{itemize}
\item a characteristic curve of $\tilde u_{j}$ 
\item through the point $(t_{j},x_{j})$ 
\item along which $\tilde u_{j}$ is $G$-Lipschitz continuous and
\item which does not cross the previously chosen characteristics ${\chi{(\cdot,y_{1})}}$, ${\bar\gamma_{1}(t)}$,\dots, ${\bar\gamma_{j-1}(t)}$, ${\chi{(\cdot,y_{2})}}$; this means that any two characteristics of this set lie always on the same side of the pane with respect to each other, when they differ.
\end{itemize}
Note before proceeding that $\tilde u_{j}$ is monotone at each fixed time on the $j+2$ points
\begin{equation}
\label{E:pointmonotone}
i_{\chi_{(y_{1})}}(t),\quad \left\{i_{\bar\gamma_{i}(t)}\right\}_{i=1,\dots,j},\quad i_{\chi_{(y_{2})}}(t).
\end{equation}
Because of this monotonicity, at later steps of the iteration the values of the function $\tilde u_{j}$ on the points~\eqref{E:pointmonotone} are not changed.
\end{itemize}
We obtained with the iterative procedure that
\begin{enumerate}
\item each continuous function $\tilde u_{j}$ is a Lagrangian solution and $G$ still bounds its source, thanks to Claim~\ref{C:basiccut};
\item the (whole) sequence $\{\tilde u_{j}\}_{j\in\N}$ converges uniformly on $S_{y_{1}y_{2}}$ to a function $\tilde u$, because $u$ is uniformly continuous and the cutting procedure preserves the modulus of continuity: similarly to the next item, for $h>j$ one has the estimate
\begin{equation*}
\begin{split}
\norm{\tilde u_{j} -\tilde u_{h}}_{\distrBdd(S_{y_{1}y_{2}})}
&\leq  \omega\left(\diam\left(S_{y_{1}y_{2}}\setminus\operatorname{Im}\left\{i_{\chi_{(y_{1})}}, \left\{i_{\bar\gamma_{i}}\right\}_{i=1,\dots,j},\quad i_{\chi_{(y_{2})}} \right\}\right)\right)
\end{split}
\end{equation*}
and the curves~\eqref{E:pointmonotone} become dense in $S_{y_{1}y_{2}}$ by construction, so the RHS goes to $0$.
\item $\norm{u -\tilde u}_{\distrBdd(S_{y_{1}y_{2}})}\leq \omega(y_{2}-y_{1})$, where $\omega$ is a modulus of uniform continuity of $U(t,y)$, because by construction we have
\begin{equation}
\label{E:unifomrmestimateutilde}
\begin{split}
\norm{u -\tilde u}_{\distrBdd(S_{y_{1}y_{2}})}
&\leq
\sup_{(t,x)\in S_{y_{1}y_{2}}} \left\{ | u(t,x) - u(i_{\chi(y_{1})}(t)) |,| u(t,x) - u(i_{\chi(y_{2})}(t)) |\right\}
\\
&\leq 
\omega(y_{2}-y_{1}) \leq \omega(\delta);
\end{split}
\end{equation}
\item $\tilde u$ is still a Lagrangian solution and that $G$ still bounds its source by Corollary~\ref{C:unifConverengeLagr};
\item $\tilde u$ is monotone in the $x$-variable at each $t$ fixed because each $\tilde u_{j}$ is monotone on the points~\eqref{E:pointmonotone}, which become dense in the interval $[\chi(t,y_{1}),\chi(t,y_{1})]$.\qedhere
\end{enumerate}
\end{proof}
We are finally left with the proof of Claim~\ref{C:basiccut}.
\begin{proof}[Proof of Claim~\ref{C:basiccut}]
\label{proof:basiccut}
Owing to Lemma~\ref{L:lipcharisLagr}, $ u_{m}(t,x):=u(i_{\bar\gamma}(t))\vee u(t,x)$ is a Lagrangian solution provided that we exhibit a characteristic curve, along which $u_{m}$ is $G$-Lipschitz continuous, through any point $(t,x)$.
For simplifying the exposition suppose that the set $\{f'(u_{m})<f'(u)\}$  is empty, if not this region is treated similarly to below as a second step.
Set
\[
\lambda_{m}(t,x):=f'(u_{m}(t,x))\quad\geq\quad \lambda(t,x):=f'(u(t,x)).
\]

Consider the set of $C^{1}$-curves through a point $(\bar t,\bar x)$ defined by
\begin{equation*}
\Gamma
=
\clos\left(\left\{\begin{aligned}\gamma\in C^{1}(\R^{+})\quad \Big|\quad \gamma(\bar t)=\bar x, \ \exists j,\ \exists I_{1},\dots, I_{j},\phantom{OOOOOOOOO} \\
\exists J_{0},\dots, J_{j} \ : \  J_{0}\cup I_{1}\cup J_{1}\cup \dots J_{j}=\R^{+} \\
 \text{ and } \exists c_{1},\dots, c_{j},z_{0}, \dots,z_{j}\text{ such that}\quad\\ \begin{cases}\gamma(t)=\chi(t,\bar\gamma(t))+c_{i} ,\ \lambda_{m}(i_{\gamma}(t))>\lambda(i_{\gamma}(t)) &\text{if $t\in I_{i}$, $i=1,\dots,j$}\\ \gamma(t)=\chi(t,z_{i})&\text{if $t\in J_{i}$, $i=0,\dots,j$}\end{cases}\end{aligned}\right\}\right)^{C^{1}}
\end{equation*}
for suitable intervals $ I_{1},\dots, I_{j}, J_{0},\dots, J_{j}$ and values $c_{1},\dots, c_{j},z_{0}, \dots,z_{j}$ depending on the $\gamma$.
The set of curves is not empty, for example because the curves of the Lagrangian parameterization $\chi$ through $(t,x)$ belongs to it.
Moreover, the function $u_{m}$ is $G$-Lipschitz continuous on each $\gamma$ described within the brackets: indeed, where $\lambda_{m}>\lambda$ necessarily $u_{m}\neq u$ and hence by definition of $u_{m}$
\[
u_{m}(\gamma(t))
=
\begin{cases}
u(i_{\bar\gamma}(t)) &\text{if $t\in I_{1}\cup\dots\cup I_{j}$}\\ 
u(i_{\chi(z_{i})}(t))&\text{if $t\in J_{i}$, }i=0,\dots,j
\end{cases}
\]
and each function $u(i_{\bar\gamma}(t)) ,u(i_{\chi(z_{0})}(t)), \dots, u(i_{\chi(z_{j})}(t)) $ is $G$-Lipschitz continuous by assumption.
As a consequence, $u_{m}$ is $G$-Lipschitz continuous on each element of the closure $\Gamma$. The curves
\[
\gamma_{m}^{+}(t):=\max_{\gamma\in\Gamma}\gamma(t) \text{ for $t\geq \bar t$}, \qquad\gamma_{m}^{-}(t):=\min_{\gamma\in\Gamma}\gamma(t) \text{ for $t< \bar t$}
\] 
still belongs to $\Gamma$, if suitably prolonged. In particular, $u_{m}$ is $G$-Lipschitz continuous along $\gamma_{m}$.
This concludes the proof observing that $\gamma_{m}^{+}$ is necessarily a forward characteristic curve of $u_{m}$, and $\gamma_{m}^{-}$ a backward one, through $(\bar t,\bar x)$. Indeed, each $\gamma$ in the definition of $\Gamma$ is a $C^{1}$ curve whose slope satisfies
\[
\dot\gamma\in\{\lambda\circ i_{\gamma}, \lambda_{m}\circ i_{\gamma}\} \qquad\Rightarrow\qquad \dot\gamma^{+}_{m},\dot\gamma^{-}_{m}\in\{\lambda\circ i_{\gamma}, \lambda_{m}\circ i_{\gamma}\}.
\]
If by absurd we had $\dot\gamma_{m}^{+}(t')=\lambda\circ i_{\gamma_{m}^{+}}(t')< \lambda_{m}\circ i_{\gamma_{m}^{+}}(t')$ at some time $t'>\bar t$ then we would contradict the extremity in the definition of $\gamma_{m}^{+}$: considering $s'=\bar t \vee\max\{s<t'\ :\ \dot\gamma^{+}_{m}(s)=\lambda_{m}\circ i_{\gamma^{+}_{m}}(s))\} $, one can verify that there is an element of $\Gamma$ which satisfies
\[
\tilde\gamma_{m}^{+}(t)
=
\begin{cases}
\gamma_{m}^{+}(t) &\text{for $\bar t\leq t\leq s'$}\\
 \bar\gamma_{\tau}(t):=\bar\gamma(t )-[ \bar\gamma(s' )-\gamma^{+}_{m}(s')] &\text{for $s'<t\leq  s'' $,}
\end{cases}
\]
where $s'':=t' \wedge\min\{s>s'\ :\ \lambda_{m}\circ i_{ \bar\gamma_{\tau}}(s))\neq\lambda\circ i_{ \bar\gamma_{\tau}}(s))\}$.
Notice that 
$\tilde\gamma_{m}^{+}$ is bigger than $\gamma_{m}$ at time $s''$ because $\dot{\tilde\gamma}_{m}^{+}(s')=\gamma_{m}^{+}(s')$ and by construction  
\[\dot{\tilde\gamma}_{m}^{+}=\lambda_{m}\circ i_{\bar\gamma}=\lambda_{m}\circ i_{\gamma^{+}_{m}}(s)>\lambda\circ i_{\gamma_{m}^{+}}=\dot\gamma_{m}^{+}\text{ for }s'<t<s''.\]
The other possibilities contradict analogously the definition of $\gamma_{m}$.
\end{proof}

\section{Distributional solutions are broad solutions, if inf{}lections are negligible}
\label{S:distributionaltoBroad}

We provide in this section regularity results holding under the assumption that $f$ has negligible inf{}lection points: we prove that $u$ is Lipschitz continuous along every characteristic curve and there exists a universal source which is fine for every Lagrangian parameterization one chooses.

Without assumptions on inf{}lection points, later \S~\ref{S:distrareLagrangian} shows that distributional solutions of
\begin{equation*}
\tag{\ref{E:basicPDE}}
\pt u(t,x) + \px (f(u(t,x))) =  \gd(t,x)
\qquad f\in C^{2}(\R),
\qquad 
|\gd(t,x)|\leq G
\end{equation*}
are also Lagrangian solutions.
Being a Lagrangian solution allows to study $u$ with tools from ODEs, but it is not completely satisfactory by itself because one should be a priory careful in choosing the right Lagrangian parameterization, and the correct source related to the parameterization: the results of the present section are richer because here \emph{any} Lagrangian parameterization is allowed.

Being local arguments, we simplify the setting posing $\Omega=\R^+\times\R$, $u$ compactly supported.

\subsection{Lipschitz regularity along characteristics}
\label{Ss:Lipregalongchar}

In the present section we point out that $u$ is Lipschitz continuous along characteristic curves if inf{}lection points of $f$ are negligible:
\begin{equation}
\label{E:nonvanishingCondition}
\Ll^{1}(\clos({\infl(f)}))=0.
\tag{H}
\end{equation}
See Example~\cite[\S~4.2]{file2ABC} for a counterexample when~\eqref{E:nonvanishingCondition} fails.

\begin{theorem}
\label{T:sharpLipschitzreg}
Assume that the non-vanishing condition~\eqref{E:nonvanishingCondition} holds.
Then any continuous distributional solution $u$ of~\eqref{E:basicPDE} is $G$-Lipschitz continuous along any characteristic curve of $u$.
\end{theorem}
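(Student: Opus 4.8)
The plan is to fix a characteristic curve $\gamma$ (which is $C^1$ by Definition~\ref{D:charactcurves}), set $w(t):=u(i_\gamma(t))$, and prove that the continuous function $w$ is $G$-Lipschitz; since $\dot\gamma(t)=f'(w(t))$, this is exactly the assertion. I would split the time interval by the closed set $B:=\{t:\ w(t)\in\clos(\infl(f))\}$ and treat the ``good'' region $\{w(t)\notin\clos(\infl(f))\}$ and the ``bad'' region $B$ by different mechanisms, the latter being where the non-vanishing condition~\eqref{E:nonvanishingCondition} enters.

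On the good region the value $w(t)$ has a neighbourhood of values containing no inflection point, i.e.\ $f'$ is monotone there (no turning point); locally one is then either genuinely nonlinear ($f''\neq0$, characteristics locally unique) or linearly degenerate ($f'$ constant, so \eqref{E:basicPDE} is transport with bounded right-hand side), and in both cases $w$ should be differentiable with $|\dot w|\le G$. Uniformly, I would extract this from the Eulerian formulation by testing \eqref{E:basicPDE} against functions concentrating on the graph of $\gamma$, namely $\varphi(t,x)=\alpha(t)\,\beta_\epsilon(x-\gamma(t))$ with $\beta_\epsilon\ge0$ an $x$-mollifier. Using $\dot\gamma=f'(w)$ one obtains the exact identity
\begin{multline*}
\int \alpha'(t)\Big(\!\int\beta_\epsilon(x-\gamma(t))\,u\,dx\Big)dt
+\iint\alpha\,\beta_\epsilon'(x-\gamma)\,\big[f(u)-f'(w)\,u\big]\\
=\iint\alpha\,\beta_\epsilon\,\gd .
\end{multline*}
As $\epsilon\downarrow0$ the first term tends to $\int\alpha'\,w$, while the right-hand side is bounded by $G\int|\alpha|$ because $|\gd|\le G$ and $\int\beta_\epsilon\,dx=1$; writing $f(u)-f'(w)u=\big(f(u)-f(w)-f'(w)(u-w)\big)+\big(f(w)-f'(w)w\big)$, the purely $t$-dependent summand is annihilated by $\int\beta_\epsilon'(x-\gamma)\,dx=0$. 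Hence $|\dot w|\le G$ follows as soon as the remaining commutator, which carries the second-order remainder weighted by $f''$, is shown to vanish in the limit, and this is what the local monotonicity of $f'$ should deliver on the good region.

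The heart of the matter, and the main obstacle, is the passage across the bad set $B$. A pointwise bound $|\dot w|\le G$ off $B$ does not by itself give a Lipschitz function: a continuous $w$ could climb through the values $\clos(\infl(f))$ on $B$ while having vanishing derivative off $B$, exactly like a devil's staircase, which is precisely the mechanism of the counterexample~\cite[\S4.3]{file2ABC} when \eqref{E:nonvanishingCondition} fails. What rules this out is \eqref{E:nonvanishingCondition} itself: it forces $w(B)\subseteq\clos(\infl(f))$ to be $\Ll^1$-negligible, i.e.\ $w$ satisfies Luzin's condition (N) on $B$. Combined with $|\dot w|\le G$ on the complementary open set, a Banach--Zarecki type absolute-continuity argument upgrades $w$ to an absolutely continuous, hence $G$-Lipschitz, function, with $w(t)-w(s)=\int_s^t\dot w$ and $|\dot w|\le G$.

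I expect the delicate point to be making this last step rigorous when $B$ carries positive time-measure: one must control the variation that $w$ might accumulate while taking values in the negligible set $\clos(\infl(f))$, rather than merely knowing that its image is null. I would handle this by covering $\clos(\infl(f))$ with open sets $U_\delta$ of measure $\Ll^1(U_\delta)<\delta$, showing that the contribution of the times $w^{-1}(U_\delta)$ to the increments of $w$ is $o_\delta(1)$, and letting $\delta\downarrow0$. The whole estimate is symmetric under time reversal, yielding the two-sided bound, and the constant $G$ originates solely from $\norm{\gd}_\infty$, independently of $f$, consistently with the fact that the obstruction lives entirely in the geometry of the inflection set of the flux.
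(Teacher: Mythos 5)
Your overall architecture coincides with the paper's: split the time axis by the closed set where $w:=u\circ i_{\gamma}$ takes values in $N=\clos(\infl(f))$, prove a $G$-Lipschitz bound off that set, and handle the bad set through the fact that its image under $w$ lies in the negligible set $N$. Note, however, that the bad-set step you flag as the delicate one is actually the easy one, and it does not need a Banach--Zarecki argument (whose standard form requires $w\in\BV$, which you do not have a priori; likewise, the ``contribution of the times $w^{-1}(U_{\delta})$ to the increments'' is not a well-defined quantity for a merely continuous $w$). The paper concludes in one line via the intermediate value theorem: $w([s,t])$ is an interval, so
\begin{equation*}
|w(t)-w(s)|\;\le\;\Ll^{1}\bigl(w([s,t])\bigr)\;\le\;\Ll^{1}\bigl(w([s,t]\cap O)\bigr)+\Ll^{1}(N)\;\le\;G\,\Ll^{1}\bigl([s,t]\cap O\bigr)\;\le\;G(t-s),
\end{equation*}
where $O$ is the good open set of times, on whose components $w$ is $G$-Lipschitz.

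The genuine gap is in the good region, precisely the step you leave open with ``this is what the local monotonicity of $f'$ should deliver''. With a symmetric mollifier $\beta_{\epsilon}(x-\gamma(t))$ the commutator does \emph{not} vanish: on its support $|f(u)-f(w)-f'(w)(u-w)|\le C\,(u-w)^{2}\le C\,\omega(\epsilon)^{2}$, while $\norm{\beta_{\epsilon}'}_{L^{1}(dx)}\sim\epsilon^{-1}$, so the term is only $O\bigl(\omega(\epsilon)^{2}/\epsilon\bigr)$, which need not tend to zero for a merely continuous $u$; indeed the paper remarks that the remainder can be discarded outright only when $u$ is $1/2$-H\"older. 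Monotonicity of $f'$ cannot rescue this: it yields a \emph{sign} for the remainder (nonnegativity near values where $f$ lies above its tangent), not smallness, and that sign is useless when paired with $\beta_{\epsilon}'$, which changes sign across the curve. The paper's mechanism is genuinely different: it integrates the equation over \emph{one-sided} strips bounded by $i_{\gamma}$ and $i_{\gamma+\varepsilon}$ (resp.\ $i_{\gamma-\varepsilon}$), so that the $x$-derivative of the indicator-like test function charges only the two boundary curves, each with a definite sign; since $u=w$ on $\gamma$, the remainder survives only on the outer curve, where it has a definite sign by the tangent-line definition of the sets $D^{+}$, $D^{-}$, and one obtains the one-sided estimate $u(\tau,\gamma(\tau))-u(\sigma,\gamma(\sigma))\le\norm{\gd}_{\infty}\,|\tau-\sigma|$; the strip on the opposite side of $\gamma$ gives the reverse inequality. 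To repair your proof you must therefore replace ``the commutator vanishes'' by ``the commutator has a sign'', and correspondingly replace the symmetric mollifier by one-sided test regions, performing the estimate once on each side; with that substitution your argument becomes the paper's proof.
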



\begin{proof}
It takes a while to realize that the following is a partition of the real line into the regions
\begin{align*}
&D^{+}:=\ri\left(\big\{z\ \big|\ \exists \bar h>0:\ \forall h\in[-\bar h, \bar h]\quad f(z+h)-f(z)\geq f'(z)h\big\}\right),
\\
&D^{-}:=\ri\left(\big\{z\ \big|\ \exists \bar h>0:\ \forall h\in[-\bar h, \bar h]\quad f(z+h)-f(z)\leq f'(z)h\big\}\right),
\\
&N:=\R\setminus \left(D^{+}\bigcup D^{-}\right) \equiv \clos(\infl(f)).
\end{align*}
By assumption $N$ is Lebesgue negligible.

Consider any characteristic curve $i_{\gamma}(t)=(t,\gamma(t))$, where $\dot \gamma(t)=f'(u(t,\gamma(t)))$, $t\in\R^{+}$.
We first follow a similar computation in~\cite{Daf} which shows that $u(t,\gamma(t))$ is $\norm{g}_{\infty}$-Lipschitz continuous on the connected components of the open set $(u\circ i_{\gamma})^{-1}(D^{+})$, as in~\cite{BCSC}. 

\begin{figure}
\label{F:aree}
\centering
\begin{tikzpicture}[thick]
\def\assexLung{5.5}
\def\asseyLung{5}
\def\xa{2.5}
\def\xb{4.5}
\def\ta{2}
\def\tb{4}
\def\tc{4.5}
\def\tacca{2pt}
\def\xshiftvertice{\xa/2+\xb/2}
\draw[-stealth](-0.3,0)--(\assexLung+0.3,0) node[below]{$x$};
\draw[-stealth](0,-0.3)--(0,\asseyLung+0.3) node[left]{$t$};
\draw(0+\tacca,\ta)--(0-\tacca,\ta) node[left]{$\sigma$};
\draw(0+\tacca,\tb)--(0-\tacca,\tb) node[left]{$\tau$};

\fill[xshift=0.4cm, yshift=\ta cm, color =gialloLimone] (\xa,0) .. controls (\xa,1)  and (\xa+0.4 ,\ta-1) .. (\xa+0.4,\ta)--(\xb,0) .. controls (\xb,1)  and (\xb+0.4 ,\ta-1) .. (\xb+0.4,\ta)--cycle;
\fill[xshift=0.4cm, yshift=\ta cm, color =gialloLimone] (\xa,0) .. controls (\xa,1)  and (\xa+0.4 ,\ta-1) .. (\xa+0.4,\ta)--(\xb+0.4,\ta) .. controls (\xb+0.4,\ta)  and (\xb,1) .. (\xb,0)--cycle;
\draw (\xa,0) .. controls (\xa,1)  and (\xa+0.4 ,\ta-1) .. (\xa+0.4,\ta);
\draw[xshift=0.4cm, yshift=\ta cm] (\xa,0) .. controls (\xa,1)  and (\xa+0.4 ,\ta-1) .. (\xa+0.4,\ta);
\draw[xshift=0.8cm, yshift=2*\ta cm] (\xa,0) .. controls (\xa,0.5)  and (\xa+0.2 ,\ta-0.5-1.1) .. (\xa+0.2,\ta-1.1);
\begin{scope}[xshift=\xb cm-\xa cm, dashed]
	\draw (\xa,0) .. controls (\xa,1)  and (\xa+0.4 ,\ta-1) .. (\xa+0.4,\ta);
	\draw[xshift=0.4cm, yshift=\ta cm] (\xa,0) .. controls (\xa,1)  and (\xa+0.4,\ta-1) .. (\xa+0.4,\ta);
	\draw[xshift=0.8cm, yshift=2*\ta cm] (\xa,0) .. controls (\xa,0.5)  and (\xa+0.2 ,\ta-0.5-1.1) .. (\xa+0.2,\ta-1.1);
\end{scope}
\draw[thin, dashed] (0,\ta)--(\xb+0.4,\ta);
\draw[thin, dashed] (0,\tb)--(\xb+2*0.4,\tb);
\node[left=-0.2cm] at (\xa,1) {$\gamma(t)$};
\node[left=-0.2cm] at (\xb,1) {$\gamma(t)+\varepsilon$};
\node[above=.25cm, right=0.4cm] at (\xa,\ta) {$(\bar t,\bar x)$};
\end{tikzpicture}
\begin{tikzpicture}[thick]
\def\assexLung{5.5}
\def\asseyLung{5}
\def\xa{2.5}
\def\xb{0.5}
\def\ta{2}
\def\tb{4}
\def\tc{4.5}
\def\tacca{2pt}
\def\xshiftvertice{\xa/2+\xb/2}
\draw[-stealth](-0.3,0)--(\assexLung+0.3,0) node[below]{$x$};
\draw[-stealth](0,-0.3)--(0,\asseyLung+0.3) node[left]{$t$};
\draw(0+\tacca,\ta)--(0-\tacca,\ta) node[left]{$\sigma$};
\draw(0+\tacca,\tb)--(0-\tacca,\tb) node[left]{$\tau$};
\fill[xshift=0.4cm, yshift=\ta cm, color=gialloLimone] (\xa,0) .. controls (\xa,1)  and (\xa+0.4 ,\ta-1) .. (\xa+0.4,\ta)--(\xb,0) .. controls (\xb,1)  and (\xb+0.4 ,\ta-1) .. (\xb+0.4,\ta)--cycle;
\fill[xshift=0.4cm, yshift=\ta cm, color=gialloLimone] (\xa,0) .. controls (\xa,1)  and (\xa+0.4 ,\ta-1) .. (\xa+0.4,\ta)--(\xb+0.4,\ta) .. controls (\xb+0.4 ,\ta-1)  and (\xb,1) .. (\xb,0)--cycle;
\draw (\xa,0) .. controls (\xa,1)  and (\xa+0.4 ,\ta-1) .. (\xa+0.4,\ta);
\draw[xshift=0.4cm, yshift=\ta cm] (\xa,0) .. controls (\xa,1)  and (\xa+0.4 ,\ta-1) .. (\xa+0.4,\ta);
\draw[xshift=0.8cm, yshift=2*\ta cm] (\xa,0) .. controls (\xa,0.5)  and (\xa+0.2 ,\ta-0.5-1.1) .. (\xa+0.2,\ta-1.1);
\begin{scope}[xshift=\xb cm-\xa cm, dashed]
	\draw (\xa,0) .. controls (\xa,1)  and (\xa+0.4 ,\ta-1) .. (\xa+0.4,\ta);
	\draw[xshift=0.4cm, yshift=\ta cm] (\xa,0) .. controls (\xa,1)  and (\xa+0.4,\ta-1) .. (\xa+0.4,\ta);
	\draw[xshift=0.8cm, yshift=2*\ta cm] (\xa,0) .. controls (\xa,0.5)  and (\xa+0.2 ,\ta-0.5-1.1) .. (\xa+0.2,\ta-1.1);
	\end{scope}
\draw[thin, dashed] (0,\ta)--(\xa+0.4,\ta);
\draw[thin, dashed] (0,\tb)--(\xa+2*0.4,\tb);
\node[right=0.2cm] at (\xa,1) {$\gamma(t)$};
\node[right=0.2cm] at (\xb,1) {$\gamma(t)-\varepsilon$};
\node[above=.25cm, right=0.4cm] at (\xa,\ta) {$(\bar t,\bar x)$};
\end{tikzpicture}
\caption{Balances on characteristic regions}
\end{figure}
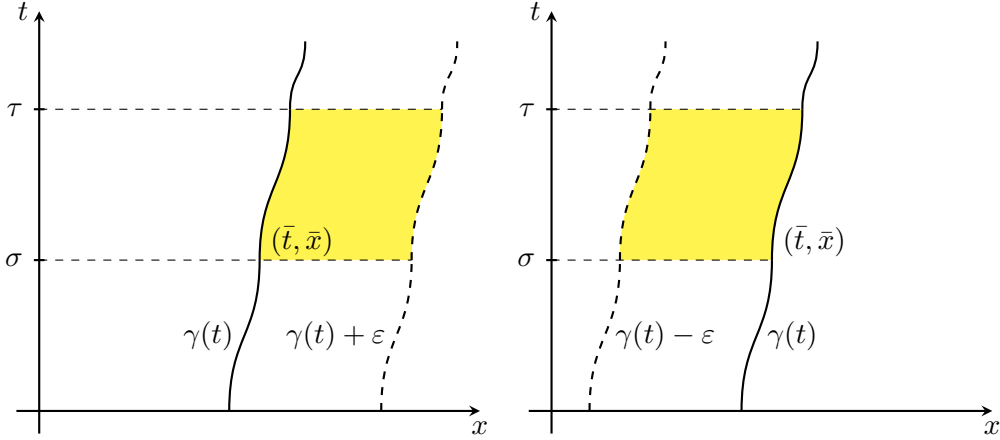
Focus on the domain bounded by the curves $i_{\gamma}(t)$, $i_{\gamma+\varepsilon}(t)$ between times $\sigma<\tau$.
The equality
\begin{subequations}
\label{E:Dafermoscomputation}
\begin{equation}
\label{E:Dafermoscomputation1}
\begin{split}
\int_{\gamma(\tau)}^{\gamma(\tau)+\varepsilon} u(\tau,x) dx
-\int_{\gamma(\sigma)}^{\gamma(\sigma)+\varepsilon} u( \sigma,x)dx
-\int_{\sigma}^{\tau}\int_{\gamma(t)}^{\gamma(t)+\varepsilon} g( t,x)dxdt
\quad\\
=
-\int_{\sigma}^{\tau}\left\{
	\left[f(u(i_{\gamma+\varepsilon}(t))) - f(u(i_{\gamma}(t))\right]
	-f'(u(i_{\gamma}(t))) 
		\left[u(i_{\gamma+\varepsilon}(t))-u(i_{\gamma}(t))\right]
\right\}dt
\\
=
-\int_{\sigma}^{\tau}\left\{
	f''(\xi) 
		\frac{\left[u(i_{\gamma+\varepsilon}(t))-u(i_{\gamma}(t))\right]^{2}}{2}
\right\}dt
		\qquad \xi(t)\in [u(i_{\gamma}(t))-u(i_{\gamma+\varepsilon}(t))]
\end{split}
\end{equation}
can be obtained integrating suitable test functions converging to the indicator of the region (Figure~\ref{F:aree}).
If either $u(\sigma,\gamma(\sigma))$ or $u(\tau,\gamma(\tau))$ belong to $D^{+}$, by definition of $D^{+}$ for $\tau-\sigma$ sufficiently small the RHS is nonpositive: we obtain thus the inequality
\begin{equation}
\label{E:Dafermoscomputation2}
\int_{\gamma(\tau)}^{\gamma(\tau)+\varepsilon} u(\tau,x)dx
-\int_{\gamma(\sigma)}^{\gamma(\sigma)+\varepsilon} u( \sigma,x)dx
\leq
\int_{\sigma}^{\tau}\int_{\gamma(t)}^{\gamma(t)+\varepsilon} g( t,x)dxdt
\leq
\varepsilon\norm{g}_{\infty}  |\tau-\sigma| .
\end{equation}
\end{subequations}
Dividing by $\varepsilon$, by the continuity assumptions on $u$ in the limit as $\varepsilon\downarrow0$ this yields
\[
u(\tau,\gamma(\tau))-u(\sigma,\gamma(\sigma))
\leq
\norm{g}_{\infty}  |\tau-\sigma| .
\]
The converse inequality is obtained by considering the similar region between $i_{\gamma-\varepsilon}(t)$, $i_{\gamma}(t)$: indeed this lead to an equation analogous to~\eqref{E:Dafermoscomputation}, but with RHS having opposite sign.

We conclude from the above analysis that $u(i_{\gamma}(t))$ is $\norm{g}_{\infty} $-Lipschitz continuous in a neighborhood of any point belonging to the inverseimage of $D^{+}$.
The same holds in an analogous way for $D^{-}$.
This local Lipschitz continuity can be equivalently stated by the inequality
\begin{equation}
\label{E:loclipg}
\Ll^{1} \left( u( i_{\gamma}^{}(B)) \right) \leq G\Ll^{1} \left(B \right)
\end{equation}
for all Borel subsets $B$ of the open set $O=\left(u\circ i_{\gamma}\right)^{-1}(D^{+}\cup D^{-})$.

The thesis finally follows by the negligibility of $N$: for every $t_{1}\leq t_{2}$
\begin{align*}
|u( i_{\gamma}^{}(t_{2}))-u( i_{\gamma}^{}(t_{1}))|
&\leq
\Ll^{1} \left( u( i_{\gamma}^{}([t_{1},t_{2}])) \right)
\\
&=
\Ll^{1} \left( u( i_{\gamma}^{} \left([t_{1},t_{2}]\cap O \right))\right)
+ \Ll^{1}(N)
\\
&\!\!\stackrel{\eqref{E:loclipg}}{\leq}
G	\Ll^{1}\left([t_{1},t_{2}]\cap O\right) + 0
\\
&\leq
G(t_{2}-t_{1})
.\qedhere
\end{align*}
\end{proof}

\begin{remark}
If $u$ is $1/2$-H\"older continuous, we see from~\eqref{E:Dafermoscomputation1} 
that $u$ is Lipschitz along characteristics independently of any assumptions on inf{}lection points of $f$.
\end{remark}

\subsection{Construction of a universal source}
\label{Ss:univSource}
We now assume the negligibility of inf{}lection points~\eqref{E:nonvanishingCondition}.
Under this assumption, we generalize~\cite[\S~6]{BCSC} and we construct for general fluxes satisfying~\eqref{E:nonvanishingCondition} a source term for the broad formulation, without discussing its compatibility with the distributional source.
Namely, we show that
\begin{subequations}
\begin{align}
\label{E:secondaUniversal}
&\exists\gfr\in\pw(\Omega)
\ :
&&\ddt u(i_{\gamma}^{}(t))
=
\gfr(i_{\gamma}^{}(t))
&&\text{in $\D(i_{\gamma}^{-1}(\Omega))$ $\forall$characteristic curve $\gamma$.}
\end{align}
The compatibility of the sources will be instead matter of \cite[\S~3]{file2ABC}, where we prove that when inf{}lection points are negligible there is a choice of such $\gfr$ so that moreover
\begin{align}
\label{E:primaUniversal}
&\pt u(t,x) + \px (f(u(t,x))) =  \gfr(t,x)
&&\text{in $\D(\Omega)$.}
\end{align}
\end{subequations}
We deal here in \S~\ref{Ss:univSource} only with the ODE property~\eqref{E:secondaUniversal}.
We call such $\gu=[\gfr]_{\lambda}$ universal source term, and~\eqref{E:secondaUniversal} shows that $u$ is a broad solution of~\eqref{E:basicPDE}.
We mention nevertheless that if $f$ is not $\alpha$-convex, $\alpha>1$, then~\eqref{E:secondaUniversal} does not identify in general a distribution, because there can be an $\Ll^{2}$-positive measure set of points where $u$ is not differentiable along characteristics: in this set the proper definition of the source will come from~\eqref{E:primaUniversal}.

Two remarks before starting. Owing to \S~\ref{Ss:Lipregalongchar}, under the sharp vanishing condition~\eqref{E:nonvanishingCondition} on inf{}lection points of $f$ one gains $G$-Lipschitz continuity along characteristic curves for any continuous distributional solution $u$ to the balance law                                                                                                                                                                                                         \begin{equation*}
\tag{\ref{E:basicPDE}}
\pt u(t,x) + \px (f(u(t,x))) =  \gd(t,x)
\qquad f\in C^{2}(\R),
\qquad 
|\gd(t,x)|\leq G.
\end{equation*}
It is not of course possible to require that the reduction of the balance law on characteristics is satisfied for every $\gfr\in\pw(\R^{+}\times\R)$ such that $\gd=[\gfr]$, because altering $\gfr(t,x)$ on a curve provides the same distribution $[\gfr]$: this is why we need to select a good representative.
Without the negligibility~\eqref{E:nonvanishingCondition} the source term of a Lagrangian parameterization might not work with a different Lagrangian parameterization and there may exist no broad solution, see~\cite[\S~4.3]{file2ABC}.

We assume therefore the negligibility of inf{}lection points~\eqref{E:nonvanishingCondition} and we proceed as follows:
\begin{itemize}
\item[\S~\ref{Sss:souslSel}:] We construct a \emph{Souslin} function $\gfr$, which intuitively must satisfy~\eqref{E:secondaUniversal}.
\item[\S~\ref{Sss:borelSel}:] We construct an analogous \emph{Borel} function $\hat \gfr$, which is stronger but more technical.
\item[\S~\ref{Sss:proofSel}:] We prove that the functions $\gfr$ and $\hat \gfr$ do satisfy~\eqref{E:secondaUniversal}.
\end{itemize}
The construction for the compatibility condition~\eqref{E:primaUniversal} comes in \cite[\S~3
]{file2ABC}.

\subsubsection{Souslin selection}
\label{Sss:souslSel}
This is the first idea: to define pointwise, but in a measurable way, a function $\hat g(t,x)$ such that $t$ is a Lebesgue point for the derivative of the composition $t\mapsto u(t,\gamma(t))$, with $\gamma$ a characteristic function through $(t,x)$, whenever there exists one satisfying this differentiability property.
As we just consider the derivative of this composition at $t$ fixed, we focus on the curve only in a neighborhood of $t$ and, for notational convenience, we translate its domain to a neighborhood of the origin. 
Therefore, fixed some $\delta>0$, one applies a selection theorem to the subset $\mathcal G$ of
\begin{subequations}
\begin{align}
[\delta,+\infty)\times\R\times C^{1}([-\delta,\delta];\R)\times [-G,G]
&&\supset
&&\mathcal G
&&\ni
&&(t,x,\gamma,\zeta)
\end{align}
defined by the intersection among the set
\begin{itemize}
\item $\mathcal C$ of time-translated characteristics through $(t,x)$, in~\eqref{E:mathcalC},
\item $\mathcal D$ in~\eqref{E:mathcalD} where one imposes the pointwise differentiability at time $t$:
\end{itemize}
\begin{align}
\notag\mathcal G^{}=&\mathcal C\cap\mathcal D\\
\label{E:mathcalC}
=&\bigg\{(t,x,\gamma,\zeta)\quad :\quad 
\gamma(0)=x,
\quad  \dot\gamma(s) =\lambda(t+s,\gamma(s))\bigg\}\\
\label{E:mathcalD}
&\bigcap\bigg\{(t,x,\gamma,\zeta)\quad :\quad\zeta=\lim_{\sigma\downarrow 0}\frac{u(t\pm\sigma,\gamma(\pm\sigma))-u(t,x)}{\pm \sigma}
\bigg\}.
\end{align}
\end{subequations}
We first need a technical but important lemma about $\mathcal G$. The selection theorem will follow.

\begin{lemma}
\label{L:GBorel}
$\mathcal G$ is Borel. 
\end{lemma}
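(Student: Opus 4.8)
The plan is to show that the set $\mathcal{G} = \mathcal{C} \cap \mathcal{D}$ is Borel by treating its two defining pieces separately and exhibiting each as a countable combination of closed and open conditions. First I would establish that $\mathcal{C}$, the set of time-translated characteristics through $(t,x)$, is closed (hence Borel). The condition $\gamma(0) = x$ is plainly a closed condition in the product topology once we use the sup-norm on $C^1([-\delta,\delta];\R)$. For the ODE constraint $\dot\gamma(s) = \lambda(t+s,\gamma(s))$, I would rewrite it in integrated form as $\gamma(s) = x + \int_0^s \lambda(t+r,\gamma(r))\,dr$ for all $s$; since $\lambda = f'\circ u$ is continuous and the evaluation $(t,\gamma)\mapsto \lambda(t+r,\gamma(r))$ depends continuously on $(t,\gamma)$ in the chosen topologies, this constraint is preserved under uniform limits. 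The key technical input here is that characteristic curves are stable under uniform convergence (as remarked after Definition~\ref{D:charactcurves} and illustrated in Figure~\ref{fig:charactNonLip}, the curves are automatically $C^1$), so $\mathcal{C}$ is closed.

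The genuinely delicate piece is $\mathcal{D}$, where one imposes that the limit defining the pointwise derivative of $t\mapsto u(t,\gamma(t))$ exists and equals $\zeta$. A limit condition of the form $\zeta = \lim_{\sigma\downarrow 0} \Delta(\sigma)$, where $\Delta(\sigma) = \dfrac{u(t\pm\sigma,\gamma(\pm\sigma)) - u(t,x)}{\pm\sigma}$, is \emph{not} obviously closed, because requiring a limit to exist is a $\Pi^0_3$-type condition rather than a closed one. My approach would be to write this constraint using a countable quantifier expansion: the limit exists and equals $\zeta$ if and only if
\[
\forall k\in\N\ \exists m\in\N\ \forall \sigma\in(0,1/m)\cap\Q:\quad |\Delta(\sigma) - \zeta| \leq 1/k.
\]
Each innermost condition $|\Delta(\sigma)-\zeta|\leq 1/k$, for fixed rational $\sigma$, is a \emph{closed} condition on $(t,x,\gamma,\zeta)$, because $\Delta(\sigma)$ depends continuously on these data (the map $(t,\gamma)\mapsto u(t\pm\sigma,\gamma(\pm\sigma))$ is continuous by continuity of $u$ and of evaluation). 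Taking the countable intersection over $\sigma\in\Q$ keeps it closed; the existential over $m$ produces an $F_\sigma$ set; and the universal over $k$ yields a countable intersection of $F_\sigma$ sets. This exhibits $\mathcal{D}$ as a Borel set at level $F_{\sigma\delta}$.

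Intersecting the closed set $\mathcal{C}$ with the Borel set $\mathcal{D}$ gives that $\mathcal{G}$ is Borel, which is the claim. The main obstacle I anticipate is handling the two-sided limit and the $\pm\sigma$ notation cleanly: one must verify that the left and right incremental ratios are both controlled, so I would treat the forward limit ($\sigma\downarrow 0$ with $+\sigma$) and the backward limit separately, intersect the two resulting Borel sets, and take care that the differentiability is genuinely two-sided with a common value $\zeta$. A secondary point requiring attention is measurability of the map $(t,x,\gamma)\mapsto \Delta(\sigma)$ jointly in all variables including the function-space variable $\gamma\in C^1([-\delta,\delta];\R)$; here I would invoke the separability of $C^1([-\delta,\delta];\R)$ and continuity of the evaluation functionals to guarantee that all the conditions above are Borel-measurable in the product space. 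Once joint continuity of the finitely many building-block maps is checked, the countable Boolean combination is routine.
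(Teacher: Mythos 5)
Your proof is correct, and for the key step it takes a genuinely different --- and more elementary --- route than the paper. The treatment of $\mathcal C$ coincides: closedness follows from continuity of $\lambda$ and stability of characteristics under uniform convergence. For $\mathcal D$, the paper also ends up with an $F_{\sigma\delta}$ representation, but it discretizes the limit along the specific sequence $h_{n+1}=h_n-h_n^2$, $h_1=1/2$ (Claim~\ref{C:discrlim}); the equivalence of that discrete limit with the full limit is \emph{not} a topological fact, it hinges on the $G$-Lipschitz continuity of $u$ along characteristic curves, i.e.\ on Theorem~\ref{T:sharpLipschitzreg} and hence on the standing assumption~\eqref{hyp:H} of that section, and the paper explicitly remarks that a generic sequence $\tilde h_n\downarrow0$ would not work. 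You instead restrict the incremental ratio $\Delta(\sigma)=\bigl(u(t\pm\sigma,\gamma(\pm\sigma))-u(t,x)\bigr)/(\pm\sigma)$ to \emph{rational} increments with the uniform quantifier $\forall\sigma\in(0,1/m)\cap\Q$: since $\sigma\mapsto\Delta(\sigma)$ is continuous on $(0,\delta]$ (only continuity of $u$ and $\gamma$ is needed, the denominator being bounded away from zero), convergence over a dense set of increments is equivalent to the full limit --- density plus continuity in $\sigma$ is exactly what a single lacunary sequence such as $2^{-n}$ fails to provide, which is the obstruction the paper's quadratically clustering sequence was designed to overcome via the Lipschitz bound. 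The net effect: your argument proves the lemma for an arbitrary continuous $u$, with no appeal to~\eqref{hyp:H} or to Theorem~\ref{T:sharpLipschitzreg}, so Borelness of $\mathcal G$ becomes a purely topological statement; what the paper's extra work buys is a representation of $\mathcal D$ through the fixed sequence $\{h_n\}$, which is re-used downstream (the sets~\eqref{E:accessorio2} in the Arsenin--Kunugui selection of Lemma~\ref{L:BorelApprg} are built from the same sequence). Your two flagged technical points are handled correctly: two-sidedness by intersecting the forward and backward Borel sets with common $\zeta$, and no separability argument is really needed beyond joint continuity of the evaluation maps $(t,x,\gamma,\zeta)\mapsto\Delta(\sigma)$ for each fixed $\sigma$.
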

\begin{proof}
Focus first on the components $(t,x,\gamma)$.
The set $\mathcal C$ is closed thanks to the continuity of $\lambda$.

We discretize the limit in the variable $\zeta$, so that $\mathcal D$ is described as a $F_{\sigma\delta}$-set.

\begin{claim}
\label{C:discrlim}
Existence and the values of the following two limits are the same:
\begin{align}
&\lim_{h\downarrow 0}\frac{u(t+h,\gamma(h))-u(t,x)}{h}
\\
\label{E:sequenceh}
&\lim_{n\to\infty}\frac{u(t+h_{n},\gamma(h_{n}))-u(t,x)}{h_{n}},
&& h_{n+1}=h_{n}-h_{n}^{2},
&& h_{1}=1/2.
\end{align}
One can similarly have the full limit for $h\to0$ instead of $h\downarrow0$, that we study for simplicity.
\end{claim}
\begin{proof}[Proof of Claim~\ref{C:discrlim}]
By Theorem~\ref{T:sharpLipschitzreg} $u$ is $\norm{g}_\infty$-Lipschitz continuous on characteristic curves. Setting $(t,x)=(0,0)$ for notational convince, then for every $h\in(h_{n+1},h_{n}]$\begin{align*}
\bigg|
&\frac{ u(h,\gamma(h))-u(0,0)}{h} - \frac{ u(h_n,\gamma(h_n))-u(0,0)}{h_{n}}
\bigg|
\\
&=
\bigg|
\left(\frac{1}{h}-\frac{1}{h_{n}}\right)\big[ u(h,\gamma(h))-u(0,0)\big] 
	- \frac{1}{h_{n}}\big[ u(h_n,\gamma(h_n))-u(h,\gamma(h))\big]
\bigg|
\\
&\leq
2G\frac{h_{n}-h}{ h_{n}} .
\end{align*}
By construction however
\[
|h_{n}-h|
\leq
|h_{n}-h_{n+1}|
=
h_{n}^{2},
\]
yielding that the existence of the limit along $\{h_{n}\}_{n}$ implies the existence of the limit for any $h\downarrow0$.
\end{proof}

Notice that the claim would not hold choosing a generic $\tilde h_n\downarrow 0$ instead of $\{h_n\}_n$.
After observing that the limit is discrete, the classical differentiability constraint in~\eqref{E:mathcalD} is
\[
\forall k\ \exists n\ \forall \bar n\geq n:
\qquad
\left| \zeta-\frac{u(t\pm h_{\bar n}, \gamma(h_{\bar n}))-u(t,x)}{\pm h_{\bar n}}\right| \leq 2^{-k}.
\]
Therefore, $\mathcal D$ is equivalently defined as the $F_{\sigma\delta}$ set
\[
\mathcal D
=\bigcap_{k\in\N} \bigcup_{n\in\N} \bigcap_{\bar n\geq n} 
\bigg\{(t,x,\gamma,\zeta)\quad :\quad\left| \zeta-\frac{u(t\pm h_{\bar n}, \gamma(h_{\bar n}))-u(t,x)}{\pm h_{\bar n}}\right| \leq 2^{-k}
\bigg\}	
.
\]
Since the set within brackets is closed, $\mathcal G=\mathcal C\cap\mathcal D$ is Borel.
\end{proof}

Let $E$ be the projection of $\mathcal G$ on the first two components $\R^{+}\times \R$.\label{firstdefofE}
The set $E$ is the set of points where there exists an absolutely continuous (time-transalted) characteristic curve having $0$ as a density point for the derivative of $u(t+s,\gamma_{(t,x)}(s))$.
The selection theorem below assigns to every point $(t,x)$ where possible, which is to every point in $E$, an absolutely continuous integral curve $\gamma_{(t,x)}(s)$ for the ODE $\dot\gamma_{(t,x)}(s)=\lambda(t+s, \gamma_{(t,x)}(s))$ together with the Souslin function
\begin{equation}
\label{E:universalSource}
(t,x)\mapsto \gfr(t,x)\equiv\frac{d}{ds} u(t+s,\gamma_{(t,x)}(s))\Big|_{s=0}.
\end{equation}

\begin{remark}
\label{R:Sfullmeasure}
We comment on what information on $E$ comes from hypothesis on $f$:
\begin{enumerate}
\item If $f$ is $\alpha$-convex, we will observe in \cite[\S~3.2]{file2ABC} that the projection $E$ of $\mathcal G$ on $\R^+\times\R$ has full measure.
This follows for the case of quadratic f{}lux by a Rademacher theorem in the context of the Heisenberg group~\cite{FSSC,BCSC}.
\item If $f$ is even strictly but not uniformly convex~\cite[\S~4.2]{file2ABC} shows that $E$ may fail to have full measure. If (the closure of) inf{}lection points of $f$ are negligible, however, the Lipschitz continuity of $u$ along characteristics of Theorem~\ref{T:sharpLipschitzreg} implies that
\[
\Ha^{1}( i_{\gamma}(\R)\setminus E)=0
\]
for every characteristic curve $\gamma(t)$.
\item For general f{}luxes not only $E$ may not have full $\Ll^{2}$-measure, but also $i_{\gamma}(\R)\setminus E$ may not have full $\Ha^{1}$-measure for some characteristic curve $\gamma$ along which $u$ is \emph{not} Lipschitz-continuous, see~\cite[\S~4.3
]{file2ABC}.
\end{enumerate}
The set $E$ is considered also in~\cite[\S~3]{file2ABC} for the compatibility of the source terms.
\end{remark}

\begin{corollary}[Selection theorem]
\label{S:selectionth}
For every $\delta>0$, there exists a function
\begin{align*}
[\delta,+\infty)\times \R\supset E&&\ni&&(t,x)&&\mapsto &&(\gamma_{(t,x)}(s), \gfr(t,x))&& \in &&C^{1}([-\delta,\delta];\R)\times[-G,G]
\end{align*}
which is measurable for the $\sigma$-algebra generated by analytic sets and which satisfies by definition
\begin{align*}
(t,x,\gamma_{(t,x)}(s), \gfr(t,x))\in\mathcal G.
\end{align*}
\end{corollary}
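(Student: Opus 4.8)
The statement is exactly a measurable uniformization of the Borel set $\mathcal G$, so the plan is to invoke the Jankov--von Neumann selection theorem. Recall its content: if $X$ and $Y$ are Polish spaces and $A\subseteq X\times Y$ is analytic, then the projection $\pi_X(A)$ is analytic and $A$ admits a \emph{uniformization}, i.e.\ a map defined on $\pi_X(A)$ whose graph is contained in $A$ and which is measurable with respect to the $\sigma$-algebra generated by the analytic subsets of $X$. I would apply this with $X=[\delta,+\infty)\times\R$, with $Y=C^{1}([-\delta,\delta];\R)\times[-G,G]$, and with $A=\mathcal G$; the projection $\pi_X(\mathcal G)$ is precisely the set $E$ introduced on Page~\pageref{firstdefofE}.

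First I would check that the ambient spaces satisfy the hypotheses. The domain $X=[\delta,+\infty)\times\R$ is a closed subset of $\R^{2}$, hence Polish. For the target, $C^{1}([-\delta,\delta];\R)$ endowed with the norm $\norm{\gamma}_{C^{1}}=\norm{\gamma}_{\infty}+\norm{\dot\gamma}_{\infty}$ is a separable Banach space, thus Polish, and $[-G,G]$ is compact; their product $Y$ is therefore Polish. The essential input is that $\mathcal G$ is Borel in $X\times Y$, which is exactly the content of Lemma~\ref{L:GBorel}; in particular $\mathcal G$ is analytic, and so is its projection $E$.

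With these verifications in hand, the Jankov--von Neumann theorem yields a map $(t,x)\mapsto(\gamma_{(t,x)},\zeta_{(t,x)})$ defined on $E$, measurable for the $\sigma$-algebra generated by analytic sets, with $(t,x,\gamma_{(t,x)},\zeta_{(t,x)})\in\mathcal G$ for every $(t,x)\in E$. Reading off the last coordinate and setting $\gfr(t,x):=\zeta_{(t,x)}$ produces the Souslin function, while the third coordinate provides the selected time-translated characteristic. Since membership in $\mathcal D$ forces $\zeta_{(t,x)}$ to equal the derivative at $s=0$ of $s\mapsto u(t+s,\gamma_{(t,x)}(s))$, this reproduces precisely~\eqref{E:universalSource}.

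I expect no serious obstacle in the proof itself, as it is a direct application of a standard uniformization principle once Lemma~\ref{L:GBorel} is available; the only genuinely delicate points are the descriptive-set-theoretic verifications, namely the Polishness of the curve space and, above all, the Borel measurability of $\mathcal G$ already established. The real limitation---and the reason a separate, more laborious construction is required in~\S~\ref{Sss:borelSel}---is that this soft selection principle cannot in general deliver a \emph{Borel} selection, only one measurable for the larger $\sigma$-algebra generated by the analytic sets; upgrading to a genuinely Borel source $\hat\gfr$ will require exploiting additional structure rather than the Jankov--von Neumann theorem alone.
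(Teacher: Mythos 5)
Your proposal is correct and follows essentially the same route as the paper: both apply the (Jankov--)von Neumann selection theorem to the Borel set $\mathcal G$ provided by Lemma~\ref{L:GBorel}, obtaining a selection on the projection $E$ that is measurable for the $\sigma$-algebra generated by analytic sets. Your additional verifications (Polishness of $[\delta,+\infty)\times\R$ and of $C^{1}([-\delta,\delta];\R)\times[-G,G]$) are left implicit in the paper but are exactly the right hypotheses to check.
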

\begin{proof}
The Borel measurability of $\mathcal G$ proved in Lemma~\ref{L:GBorel} allows to apply to $\mathcal G$ Von Neumann selection theorem~\cite[Theorem~5.5.2]{Sri}, from~\cite{VN}, which provides the thesis.
\end{proof}
\begin{definition}
We define as a Souslin universal source the function
\[
 \gfr(t,x)=\gfr_{t,x}=
\begin{cases}
0 &(t,x)\notin E \\
0 & u(t,x)\in N\\
 \gfr_{t,x} &(t,x)\in E,\ u(t,x)\notin N  .
\end{cases}
\]
\end{definition}
The importance of the above selection theorem is due to the following relation.
\begin{theorem}
\label{T:univselection}
Assume that $\Ll^{1}(\clos({\infl(f)}))=0$.
Then for every absolutely continuous integral curve $\gamma$ of the ODE $\dot\gamma=\lambda(i_{\gamma})$, one has that $\gfr (i_{\gamma})$ is well defined $\Ll^{1}$-a.e.~and it satisfies
\[
u(i_{\gamma}(s))-u(i_{\gamma}(r))=\int_{r}^{s}  \gfr (i_{\gamma}(t)) dt
\qquad \forall 0\leq r\leq s.
\]
\end{theorem}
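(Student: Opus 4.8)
The plan is to fix an arbitrary characteristic $\gamma$ and to show that the two functions $t\mapsto \ddt u(i_{\gamma}(t))$ and $t\mapsto\gfr(i_{\gamma}(t))$ agree for $\Ll^{1}$-a.e.\ $t$; the integral identity then follows from the fundamental theorem of calculus. First I would invoke Theorem~\ref{T:sharpLipschitzreg}: under~\eqref{E:nonvanishingCondition} the composition $w(t):=u(i_{\gamma}(t))$ is $G$-Lipschitz, hence absolutely continuous, so
\begin{equation*}
u(i_{\gamma}(s))-u(i_{\gamma}(r))=\int_{r}^{s}w'(t)\,dt,\qquad |w'|\le G,
\end{equation*}
with $w'(t)$ existing at $\Ll^{1}$-a.e.\ $t$. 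It therefore suffices to prove $w'(t)=\gfr(i_{\gamma}(t))$ almost everywhere, and along the way that $\gfr(i_{\gamma}(\cdot))$ is $\Ll^{1}$-measurable, which is immediate since $\gfr$ is measurable for the $\sigma$-algebra generated by analytic sets (Corollary~\ref{S:selectionth}) and $i_{\gamma}$ is continuous.

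Next I would dispose of the two sets on which $\gfr$ is set to $0$ by definition. On $\{t:\ u(i_{\gamma}(t))\in N\}$ the function $\gfr(i_{\gamma}(t))$ vanishes; since $N=\clos(\infl(f))$ is $\Ll^{1}$-negligible by~\eqref{E:nonvanishingCondition} and $w$ is Lipschitz, the area formula gives $\int_{\{w\in N\}}|w'|\,dt=\int_{N}\#\{w=y\}\,dy=0$, so $w'=0=\gfr(i_{\gamma})$ a.e.\ there. On $\{t:\ i_{\gamma}(t)\notin E\}$ I would use Remark~\ref{R:Sfullmeasure}, item (2), which under~\eqref{E:nonvanishingCondition} gives $\Ha^{1}(i_{\gamma}(\R)\setminus E)=0$; as $i_{\gamma}$ is bi-Lipschitz onto its image (indeed $|i_{\gamma}(t_{1})-i_{\gamma}(t_{2})|\ge|t_{1}-t_{2}|$), the preimage $\{t:\ i_{\gamma}(t)\notin E\}$ is $\Ll^{1}$-negligible. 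This already shows that $\gfr(i_{\gamma})$ is well defined a.e., and reduces the identity to the points where $w'(t)$ exists, $i_{\gamma}(t)\in E$, and $u(i_{\gamma}(t))\notin N$.

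At such a point $(t_{0},x_{0})=i_{\gamma}(t_{0})$ both $\gamma$ and the selected curve $\gamma_{(t_{0},x_{0})}$ are characteristics through $(t_{0},x_{0})$ along which $u$ is differentiable at $t_{0}$: for $\gamma$ because $w'(t_{0})$ exists, and for $\gamma_{(t_{0},x_{0})}$ by the very construction of $\mathcal G$ and~\eqref{E:universalSource}. The heart of the proof is then to show that these two one-sided derivatives coincide, that is, that at a point with $u\notin N$ the infinitesimal increment of $u$ along a characteristic does not depend on which characteristic is chosen. Here I would use that $u(t_{0},x_{0})$ lies in $D^{+}$ or $D^{-}$, say $z_{0}=u(t_{0},x_{0})\in D^{+}$, so that $f$ sits above its tangent at $z_{0}$. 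Two characteristics issuing from $(t_{0},x_{0})$ share the slope $f'(z_{0})$ at $t_{0}$ and can be compared on the thin lens they bound; feeding this into the balance identity~\eqref{E:Dafermoscomputation1} and exploiting that the genuine nonlinearity forces the family of characteristics through the point to carry a single first-order increment (as in the rarefaction picture, where every characteristic of a centred fan transports the same value of $u$), one concludes $\ddt u(i_{\gamma}(t_{0}))=\ddt u(i_{\gamma_{(t_{0},x_{0})}}(t_{0}))=\gfr(t_{0},x_{0})$.

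I expect this last coincidence to be the main obstacle. The difficulty is precisely that $u$ is merely continuous, so it need not be differentiable as a function of $(t,x)$ and the characteristics through a single point may genuinely branch; what must be extracted from $D^{\pm}$ is that such branching is nevertheless compatible with a \emph{single} derivative of $u$ along all the branches at $\Ll^{1}$-a.e.\ height of $\gamma$, the negligibility~\eqref{E:nonvanishingCondition} being exactly what removes the degenerate values $N$ where this mechanism breaks down. Once the a.e.\ identification $w'=\gfr\circ i_{\gamma}$ is in hand, integrating and using the absolute continuity of $w$ yields the stated formula for all $0\le r\le s$, uniformly in the choice of $\gamma$.
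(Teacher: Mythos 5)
Your reduction steps are sound and essentially match the paper's own: the $G$-Lipschitz continuity of $w=u\circ i_{\gamma}$ from Theorem~\ref{T:sharpLipschitzreg}, the vanishing of $w'$ a.e.\ on $(u\circ i_{\gamma})^{-1}(N)$ (the paper's Lemma~\ref{L:dersuinsieminulli}, which you replace by the Banach indicatrix/area formula), and the negligibility of $\{t:\ i_{\gamma}(t)\notin E\}$ via Remark~\ref{R:Sfullmeasure}. But the step you yourself flag as ``the main obstacle'' is exactly the content of the theorem, and your sketch for it does not work: you aim to prove that at a \emph{fixed} point $(t_{0},x_{0})$ where both $\gamma$ and the selected curve $\gamma_{(t_{0},x_{0})}$ admit a derivative of $u$ along them, the two derivatives coincide. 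No local ``thin lens'' comparison can give this: two characteristics through the same point share the slope $f'(u(t_{0},x_{0}))$ there and separate only quadratically in time, so different first-order increments of $u$ along the two branches (say $\alpha t$ versus $\beta t$) are perfectly compatible with the known regularity of $u$ --- they only force $|\Delta u|\sim|\Delta x|^{1/2}$, i.e.\ H\"older-$1/2$ behaviour, which these solutions genuinely exhibit. Accordingly, the paper never proves the pointwise statement; it proves something weaker but sufficient: along a fixed $\bar\gamma$, the set of times where the derivative exists but differs from $\hat\gfr(i_{\bar\gamma}(t))$ is at most \emph{countable}.

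The mechanism that achieves this --- and which is missing from your proposal --- is a two-point interaction argument. After a countable decomposition indexed by $\varepsilon$ and a scale $2^{-n}$ (formulated with integral averages of the derivative rather than pointwise derivatives, see~\eqref{E:accessorio}), one shows that the bad set at scale $\sigma$ cannot contain two times $t_{1}<t_{2}$ with $t_{2}-t_{1}<\sigma$. Indeed, in a region where $f''(u)$ has a sign (this is where the negligibility of $N=\clos(\infl(f))$ enters), the excess of the derivative averages along the selected curves $\gamma_{0}$ (through $i_{\bar\gamma}(t_{1})$) and $\gamma_{\rho}$ (through $i_{\bar\gamma}(t_{2})$) over the average of $\bar\zeta$ forces, by monotonicity of $f'\circ u$, the ordering $\bar\gamma\leq\gamma_{0}$, $\bar\gamma\leq\gamma_{\rho}$, hence a crossing of $\gamma_{0}$ and $\gamma_{\rho}$ at some $\rho'\in[t_{1},t_{2}]$; computing $u$ at the crossing point by integrating $\zeta_{0}$ forward from $t_{1}$ and $\zeta_{\rho}$ backward from $t_{2}$, and comparing with the integral of $\bar\zeta$ along $\bar\gamma$, produces two incompatible expressions for the same quantity $u(i_{\bar\gamma}(t_{2}))-u(i_{\bar\gamma}(t_{1}))$ --- a contradiction. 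It is this crossing of two \emph{different} selected characteristics issued at two \emph{different} times, not any property of a single point, that rules out disagreement on a set with accumulation; the countably many surviving exceptional times are then harmless for the integral identity. Without this (or an equivalent) argument, your proof has a hole exactly where the theorem is nontrivial.
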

Theorem~\ref{T:univselection} is fairly not trivial because in~\eqref{E:universalSource} the universal source $ \gfr(t,x)$ is defined as the derivative of $u$ along a chosen curve $\gamma_{(t,x)}$ which changes changing the point $(t,x)$, and it is not even defined on a full measure set!
What is relevant for the theorem is that the set where $\gfr$ is not defined, or not uniquely defined, is negligible along any characteristic curve, which is that
\[
\gu=[ \gfr]_{\lambda}
\]
is well defined independently of the selection we have made. Different selections may change $ \gfr$, but not $\gu$.
We postpone the proof of the theorem and of this fact to \S~\ref{Sss:proofSel}, after showing that it is possible to define a Borel selection $\hat\gfr$.
Theorem~\ref{T:univselection} implies that $\gfr$ and $\hat\gfr$ give the same $\gu$.

\subsubsection{Borel selection}
\label{Sss:borelSel}
Before proving Theorem~\ref{T:univselection}, for the sake of completeness we show that one can define as well a Borel function, that we denote by $\hat \gfr(t,x)=\hat\gfr_{t,x}$, for which Theorem~\ref{T:univselection} still holds. This requires a bit more work than the previous argument: we do not associate immediately to each point (where it is possible) an eligible curve and the derivative of $u$ along it, but something which must be close to it.
We find then with the proof of Theorem~\ref{T:univselection} that we end up with the same class $\gu=[\hat \gfr]_{\lambda}$.

\begin{lemma}
\label{L:BorelApprg}
The $(t,x)$-projection $E$ of $\mathcal G$ is Borel. For every $\varepsilon>0$ there exists a Borel function
\begin{align*}
S&&\ni&&(t,x)&&\mapsto &&(\gamma_{\varepsilon,t,x},\gfr_{\varepsilon,t,x}) &&\in &&C^{1}([-\delta,\delta])\times[-G-\varepsilon,G+{\varepsilon}]
\end{align*}
such that $(t,x,\gamma_{\varepsilon,t,x},\gfr_{\varepsilon,t,x}) \in\mathcal C$ of~\eqref{E:mathcalC} and such that for $|h|$ sufficiently small
\[
\left| \gfr_{\varepsilon,t,x} - \frac{u(t\pm h,\gamma_{\varepsilon,t,x}(\pm h))-u(t,x)}{\pm h}\right|<\varepsilon .
\]
\end{lemma}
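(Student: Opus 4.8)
The plan is to replace the exact differentiability constraint \eqref{E:mathcalD}, which behaves badly under projection, by a family of \emph{closed} approximate constraints whose sections over $(t,x)$ are compact, and then to glue Borel selections obtained on each of them. The structural fact that makes every projection below Borel is the compactness of the characteristic funnel: for fixed $(t,x)$ the set $K_{(t,x)}$ of time-translated characteristics $\gamma$ through $(t,x)$ is compact in $C^{1}([-\delta,\delta])$, since $|\dot\gamma|\le\Lambda$ makes the family equibounded and $\dot\gamma(s)=\lambda(t+s,\gamma(s))$ with $\lambda$ uniformly continuous makes $\dot\gamma$ equicontinuous, so Arzelà--Ascoli applies and limits of characteristics are again characteristics. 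Moreover, when $(t,x)$ ranges in a compact box $Q\subset[\delta,\infty)\times\R$ the corresponding characteristics form a compact subset of $C^{1}$, so $\mathcal C\cap(Q\times C^{1}\times[-G-\varepsilon,G+\varepsilon])$ is compact.

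Following Claim~\ref{C:discrlim} I would use the discrete sequence $\{h_{n}\}$ of \eqref{E:sequenceh} and, writing the difference quotient $DQ_{h}=\tfrac{u(t+h,\gamma(h))-u(t,x)}{h}$ (and its $\pm$ variant), introduce for $n\in\N$, $\varepsilon>0$ the set
\[
B_{n,\varepsilon}:=\Big\{(t,x,\gamma,\zeta)\in\mathcal C\ :\ \Big|\zeta-\tfrac{u(t\pm h_{\bar n},\gamma(\pm h_{\bar n}))-u(t,x)}{\pm h_{\bar n}}\Big|\le\varepsilon\ \ \forall \bar n\ge n\Big\}.
\]
Since each fixed-$\bar n$ quotient is continuous in $(t,x,\gamma)$ and $\mathcal C$ is closed, $B_{n,\varepsilon}$ is closed, and by the previous paragraph $B_{n,\varepsilon}\cap(Q\times C^{1}\times[-G-\varepsilon,G+\varepsilon])$ is compact; hence its $(t,x)$-projection $E_{n,\varepsilon}$ is $\sigma$-compact, in particular Borel, and $E_{n,\varepsilon}\subseteq E_{n+1,\varepsilon}$. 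If $(t,x)\in E$, an exact characteristic $\gamma$ with limit $\zeta_{0}=\gfr(t,x)$ gives $(t,x,\gamma,\zeta_{0})\in B_{n,\varepsilon}$ for $n$ large, so $E\subseteq S_{\varepsilon}:=\bigcup_{n}E_{n,\varepsilon}$, which is Borel.

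On $S_{\varepsilon}$ I would build the selection by gluing. The multifunction $(t,x)\mapsto (B_{n,\varepsilon})_{(t,x)}$ is compact-valued and measurable, because for open $U$ the set $\{(t,x):(B_{n,\varepsilon})_{(t,x)}\cap U\ne\emptyset\}$ is the $(t,x)$-projection of $B_{n,\varepsilon}\cap(\,\cdot\,\times U)$, again $\sigma$-compact by the compactness above; so the Kuratowski--Ryll-Nardzewski theorem yields a Borel selection $s_{n}$ on $E_{n,\varepsilon}$. Setting $N(t,x):=\min\{n:(t,x)\in E_{n,\varepsilon}\}$, which is Borel since the $E_{n,\varepsilon}$ are Borel and increasing, the map $(t,x)\mapsto(\gamma_{\varepsilon,t,x},\gfr_{\varepsilon,t,x}):=s_{N(t,x)}(t,x)$ is Borel, lands in $\mathcal C$ and in $C^{1}\times[-G-\varepsilon,G+\varepsilon]$, and by construction satisfies the tail bound $|\gfr_{\varepsilon,t,x}-DQ_{h_{\bar n}}|\le\varepsilon$ for $\bar n\ge N$. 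To pass from the discrete quotients to all small $|h|$ I would invoke the comparison estimate of Claim~\ref{C:discrlim}: for $h\in(h_{\bar n+1},h_{\bar n}]$ the quotients at $h$ and at $h_{\bar n}$ differ by at most $2G(h_{\bar n}-h)/h_{\bar n}\le 2Gh_{\bar n}$, negligible for $\bar n$ large; running the construction with $\varepsilon/2$ in place of $\varepsilon$ then delivers the strict inequality $<\varepsilon$ for $|h|$ small, keeping the range inside $[-G-\varepsilon,G+\varepsilon]$. Restricting to the Borel set $E$ gives the stated selection.

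The genuinely delicate point is the first assertion, that $E=\mathrm{proj}_{(t,x)}\mathcal G$ is \emph{itself} Borel (recall $\mathcal G$ is Borel by Lemma~\ref{L:GBorel}), because projections of Borel sets are in general only analytic. The sections of $\mathcal G$ over $(t,x)$ are the graphs $\{(\gamma,\zeta):\gamma\in K_{(t,x)},\ \zeta=\lim_{n}DQ_{h_{n}}\}$ of a Borel function over the Borel, \emph{non-closed}, set where the limit exists, and this is exactly where the approximate sets do not suffice: one only gets $E\subseteq\bigcap_{k}S_{1/k}$, and equality may fail because the oscillation of difference quotients is not lower semicontinuous in $\gamma$ (a $C^{1}$-limit of characteristics along which $u$ is differentiable at the base point need not itself be differentiable there). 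I would close this by observing that these sections sit inside the compact funnel $K_{(t,x)}\times[-G,G]$ and applying the projection theorem for Borel sets with $\sigma$-compact sections (Arsenin--Kunugui / Saint-Raymond, see~\cite{Sri}); verifying that the exact-differentiability sections have the required structure---using Theorem~\ref{T:sharpLipschitzreg} to confine the quotients to $[-G,G]$ and Claim~\ref{C:discrlim} to reduce to the sequence $\{h_{n}\}$---is the main obstacle and the part requiring the most care.
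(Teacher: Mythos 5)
Your selection construction is correct and is in substance the paper's own: the paper forms exactly your approximate set, namely \eqref{E:accessorio2}, verifies as you do (Ascoli--Arzel\`a compactness of the characteristic funnel, closedness of each fixed-$h_{m}$ constraint) that its $(t,x)$-sections are $\sigma$-compact, and then applies the Arsenin--Kunugui theorem \emph{once} to that set, obtaining in one stroke both that its projection is Borel and that it carries a Borel section. Your variant---Kuratowski--Ryll-Nardzewski on each closed piece $B_{n,\varepsilon}$ plus gluing along $N(t,x)=\min\{n:(t,x)\in E_{n,\varepsilon}\}$---proves the same thing with somewhat heavier machinery (your measurability check for the multifunction is essentially a second projection argument), and your passage from the discrete quotients to all small $|h|$ via Claim~\ref{C:discrlim}, running the construction with $\varepsilon/2$, is exactly how the final inequality of the statement is to be obtained.

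The divergence is in the first assertion, and there your instincts are half right, but your proposed fix would not work---nor is it needed. You are right that the approximate sets only give $E\subseteq\bigcap_{k}S_{1/k}$, possibly strictly; you are also right to distrust applying Arsenin--Kunugui to $\mathcal G$ itself: the $(t,x)$-sections of $\mathcal G$ are graphs over the set of curves along which the limit exists, a $K_{\sigma\delta}$ subset of the compact funnel which in general is \emph{not} $\sigma$-compact, so the hypothesis of the theorem fails and the route you sketch cannot be ``closed''; projections of Borel sets with such sections are in general only analytic. But the paper does not prove Borelness of $\mathrm{proj}_{(t,x)}\mathcal G$ either: in its proof the letter $E$ denotes the projection of the approximate set \eqref{E:accessorio2} (which is also the otherwise undefined domain $S$ in the statement), and it is only this set that is shown to be Borel. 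Nothing downstream requires more: Definition~\ref{D:Borelg} and the proof of Theorem~\ref{T:univselection} use only the Borel selections on these approximate projections (one may replace $E$ there by the Borel set $\bigcap_{k}S_{1/k}$), together with the fact that along every characteristic $\Ll^{1}$-a.e.\ point does lie in $\mathrm{proj}_{(t,x)}\mathcal G$, which follows from Theorem~\ref{T:sharpLipschitzreg} and needs no descriptive set theory. So keep your selection argument, leave it defined on $S_{\varepsilon}$ rather than ``restricting to $E$'', and replace your last paragraph by the observation that the Lemma's first sentence must be read for the projection of the approximate set.
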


\begin{definition}
\label{D:Borelg}
We define as a Borel universal source the function
\[
\hat \gfr(t,x)=\hat\gfr_{t,x}=
\begin{cases}
0 &(t,x)\notin E \\
0 & u(t,x)\in N\\
\liminf_{\varepsilon\downarrow 0} \gfr_{\varepsilon,t,x} &(t,x)\in E,\ u(t,x)\notin N  ,
\end{cases}
\]
where $\gfr_{\varepsilon,t,x}$ is fixed in Lemma~\ref{L:BorelApprg}.
\end{definition}
\begin{proof}[Proof of Lemma~\ref{L:BorelApprg}]
We remind the following selection theorem~\cite[Th.~5.12.1]{Sri}.
\begin{theorem}(Arsenin-Kunugui)
Let $B \subset X \times Y$ be a Borel set, $X, Y$ Polish, such that $B_{x}$ is $\sigma$-compact for every $x$. Then the projection on $X$ of $B$ is Borel, and $B$ admits a Borel function $s:P_{X}B\to Y$ such that $(x,s(x))\in B$ for all $x$ in the projection $P_{X}B$.
\end{theorem}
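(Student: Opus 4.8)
The plan is to derive both conclusions---that $P_{X}B$ is Borel and that $B$ has a Borel uniformization---from one structural fact: every Borel set with $\sigma$-compact sections decomposes as
\[
B=\bigcup_{n\in\N}B_{n},\qquad B_{n}\subseteq X\times Y\ \text{Borel with every section }(B_{n})_{x}\ \text{compact,}
\]
and, replacing $B_{n}$ by $\bigcup_{k\le n}B_{k}$ (whose sections are finite, hence compact, unions), we may take the $B_{n}$ increasing. Granting this decomposition, the theorem reduces to the compact-section case together with a first-entrance assembly: if each $P_{X}B_{n}$ is Borel and each $B_{n}$ admits a Borel selection $s_{n}$ on its projection, then $P_{X}B=\bigcup_{n}P_{X}B_{n}$ is Borel, the function $n(x)=\min\{n:x\in P_{X}B_{n}\}$ is Borel on $P_{X}B$, and $s(x):=s_{n(x)}(x)$---which on each Borel piece $\{n(x)=n\}=P_{X}B_{n}\setminus\bigcup_{k<n}P_{X}B_{k}$ equals the Borel map $s_{n}$---is a Borel uniformization of $B$.

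The compact-section case I would handle as follows. Embedding $Y$ as a closed subset of the Hilbert cube $[0,1]^{\N}$, a section compact in $Y$ is closed in the cube, so without loss of generality $Y$ is compact metric and every section is closed. The first point is that $P_{X}C$ is Borel whenever $C$ is Borel with compact sections; this is the classical projection theorem (Novikov), whose proof proceeds by transfinite induction on the Borel rank of $C$: at the base a closed set with a compact fibre projects to a closed set (the projection along a compact factor is a closed map), and the inductive passage through countable unions and intersections, while preserving the compact-section constraint, is carried by Novikov's separation theorem for analytic sets from the preceding development in \cite{Sri}. Once this is available, the selection is explicit: writing $y=(y_{1},y_{2},\dots)$ and setting $s_{1}(x)=\min\{y_{1}:y\in C_{x}\}$ (attained by compactness), each sublevel set $\{x:s_{1}(x)\le t\}=P_{X}\big(C\cap\{y_{1}\le t\}\big)$ is the projection of a Borel set with compact sections, hence Borel; iterating with the Borel constraints $y_{i}=s_{i}(x)$ produces Borel coordinate functions $s_{k}$ whose limit $s=(s_{k})$ is a Borel map with $s(x)\in C_{x}$. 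Equivalently, one may invoke Kuratowski--Ryll-Nardzewski once the compact-valued multifunction $x\mapsto C_{x}$ is seen to be Borel measurable.

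The heart of the matter, and the step I expect to be the main obstacle, is the structural decomposition, which is exactly where the $\sigma$-compactness of the sections is used. The single-space phenomenon behind it is Hurewicz's dichotomy: an analytic subset of a Polish space is $K_{\sigma}$ iff it contains no closed copy of the Baire space $\N^{\N}$. I would obtain the decomposition from its uniform, parametrized form, a Saint-Raymond-type dichotomy stating that \emph{either} $B$ is a countable union of Borel sets with compact sections \emph{or} some section $B_{x_{0}}$ contains a closed copy of $\N^{\N}$ and is therefore not $\sigma$-compact; since by hypothesis every section is $\sigma$-compact, the second alternative is excluded and the decomposition follows. Proving this dichotomy is the real work: the clean route is the unfolding / Gandy--Harrington method---establish the lightface version for $\Sigma^{1}_{1}$ sets and relativize---organizing the construction of the $B_{n}$ around a derivative operation that peels relatively compact pieces off the sections and, by a boundedness argument, terminates at a single countable ordinal uniformly in $x$.

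Assembling the pieces completes the proof: the increasing Borel sets $B_{n}$ with compact sections yield Borel projections $P_{X}B_{n}$ and Borel selections $s_{n}$ by the compact-section analysis, the first-entrance index $n(x)$ is Borel on $P_{X}B=\bigcup_{n}P_{X}B_{n}$, and $s(x)=s_{n(x)}(x)$ is the desired Borel function with $(x,s(x))\in B_{n(x)}\subseteq B$ for every $x\in P_{X}B$.
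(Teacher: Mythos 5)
There is nothing in the paper to compare against: the statement is recalled verbatim from~\cite[Th.~5.12.1]{Sri} and used as a black box inside the proof of Lemma~\ref{L:BorelApprg}, with no proof given. Your proposal reconstructs the standard textbook proof of Arsenin--Kunugui (essentially the one in the cited reference and in Kechris): reduce to the compact-section case via Saint Raymond's decomposition theorem (a Borel set with $\sigma$-compact sections is a countable increasing union of Borel sets with compact sections), settle the compact-section case by Novikov's projection theorem together with the iterated minimal-coordinate selection in the Hilbert cube, and glue with the first-entrance index. The reductions you spell out are correct: the first-entrance assembly is sound, and in the selection construction each sublevel set $\{x: s_{1}(x)\le t\}$ is indeed the projection of a Borel set with compact sections, hence Borel, while the nested nonempty compact fibres force the limit point $(s_{k}(x))_{k}$ to lie in $C_{x}$.

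Two caveats. First, a small slip: a Polish space $Y$ embeds in the Hilbert cube as a $G_{\delta}$ subset, not as a closed one (a closed subspace of the cube would be compact); this is harmless, since what you actually use is that a compact section is closed in the cube, which holds for any topological embedding. Second, and more substantively, your argument is an outline resting on two imported deep results --- Novikov's projection theorem and the parametrized Hurewicz/Saint Raymond dichotomy --- and the latter is, as you say yourself, ``the real work'': producing the decomposition requires either the effective Gandy--Harrington argument or a derivative-plus-boundedness argument on trees, neither of which you carry out. So the proposal should be read as a correct and well-organized reduction of Arsenin--Kunugui to Saint Raymond and Novikov rather than a self-contained proof; given that the paper itself imports the theorem by citation, this level of resolution is a fair match, but the core dichotomy would have to be proven, or cited precisely, for the argument to stand on its own.
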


We verify the hypothesis of and we apply the above selection theorem to the set
\begin{equation}
\label{E:accessorio2}
\bigcup_{n\in\N} \bigcap_{m>n}
\left\{
(t,x,\gamma_{},\zeta) \in \mathcal C:
\ \left| \zeta -  \frac{u(t\pm h_{m},\gamma(\pm h_{m}))-u(t,x)}{\pm h_{m}}\right|\leq \frac{\varepsilon}{2}
\right\},
\end{equation}
where $\mathcal C$ was defined in~\eqref{E:mathcalC} and $\{h_{n}\}_{n\in\N}$ immediately below that in~\eqref{E:sequenceh}.
The section
\[
\mathcal C_{(\bar t,\bar x)}=\{(\bar t,\bar x, \gamma,\zeta)\quad :\quad 
\gamma(0)=\bar x,
\quad  \dot\gamma(s) =\lambda(t+s,\gamma(s))\}
\] 
is locally compact as a consequence of Ascoli-Arzel\`a theorem, because by the boundedness of $\lambda$ the curves are equi-bounded and equi-Lipschitz continuous, and by the continuity of $\lambda$ when they converge uniformly they also converge in $C^{1}([-\delta,\delta])$.
For $t,h$ fixed the set
\[
\left\{
(\gamma,\zeta):\ 
\left| \zeta -  \frac{u(t\pm h,\gamma(\pm h))-u(t,x)}{\pm h}\right|\leq\frac{\varepsilon}{2}
\right\}
\]
is closed, therefore its intersection with $\mathcal C_{(\bar t,\bar x)}$ is compact: this proves that each $(t,x)$-section of~\eqref{E:accessorio2} is $\sigma$-compact.
The hypothesis of the theorem are satisfied: it provides that the projection $E$ of~\eqref{E:accessorio2} on the first factor $\R^+\times\R\ni(t,x)$ is Borel and that there exists a Borel subset of~\eqref{E:accessorio2} which is the graph of a function $s$ defined on $E$. Since the projection from that graph to the first components $\R^{+}\times\R$ is one-to-one and continuous, the function $s$ is a Borel section of~\eqref{E:accessorio2}, concluding our statement.
\end{proof}

\subsubsection{Proof of Theorem~\ref{T:univselection}}
\label{Sss:proofSel}
We provide here the proof of Theorem~\ref{T:univselection} with $\gfr_{t,x}$ either the Borel or the Souslin one.
Let us introduce the notation.
We consider:
\begin{itemize}
\item $\bar \gamma(s)$ a characteristic curve for the balance law through a point $(t,x)=(t,\bar\gamma(t))$.
\item $\bar\zeta(s)=\dds u(s, \bar\gamma(s))$ the derivative of $u$ along $\bar \gamma$, where it exists.
\item Either $\gamma_{\tilde \varepsilon,t, x}(s)$ or $\gamma_{t, x}(s)$: the characteristic curve of $u$ through $(t, x)$ given either by Lemma~\ref{L:BorelApprg} or by Corollary~\ref{S:selectionth}. Fix for example $\gamma_{\tilde \varepsilon,t,x}(s)$, which is more complex.
\item $\zeta_{\tilde\varepsilon}(t+s)=\dds u(t+s,\gamma_{\tilde \varepsilon,t, x}(s))$ the derivative of $u$ along $\gamma_{\tilde \varepsilon,t, x}$, where it exists. Where the derivative does not exists, set for example the function equal to $0$.
\item Either $\hat \gfr({t,x})$ of Definition~\ref{D:Borelg} or $\gfr(t,x)$ of Corollary~\ref{S:selectionth}. Fix $\hat \gfr({t,x})$, as we are showing the proof with the Borel selection of \S~\ref{Sss:borelSel}.
\end{itemize}
We indeed know from Theorem~\ref{T:sharpLipschitzreg} that $u(i_{\bar \gamma}(s))$ and $u(i_{\gamma_{\tilde \varepsilon,t, x}}(s))$ are $G$-Lipschitz  continuous.
We prove first that for almost every $t$ the derivative of $u(i_{\bar\gamma}(t))$ is precisely $\hat \gfr(i_{\bar\gamma}(t))$ if $u(i_{\bar\gamma}(t))$ is not an inf{}lection point of $f$. 
After that, we exploit again the negligibility assumption $\Ll^{1}(\clos({\infl(f)}))=0$ on the inf{}lection points of $f$ and we conclude 
\[
u(i_{\gamma}(s))-u(i_{\gamma}(r))=\int_{r}^{s} \hat \gfr (i_{\gamma}(t)) dt
\qquad \forall 0\leq r\leq s.
\]
\firststep
\step{Countable decomposition} We give a countable covering of the set of Lebesgue points $t$ where the derivative $\bar\zeta(t)$ of $u(i_{\bar\gamma}(t))$ exists but it differs from $\hat\gfr_{i_{\bar\gamma}(t)}$. The set can be described as
\begin{align*}
&\bigcup_{\varepsilon\downarrow 0}
\left\{
t:\ 
\exists \bar \zeta(t)=\lim_{\sigma\to 0}\frac{u(i_{\bar\gamma}(t+\sigma))-u(i_{\bar\gamma}(t))}{\sigma},
\quad
\left| \hat\gfr_{i_{\bar\gamma}(t)} - \bar \zeta(t)\right|\geq\varepsilon
\right\}.
\end{align*}
In particular, dropping the condition that the derivative $\bar\zeta(t)$ of $u(i_{\bar\gamma}(t))$ exists at $t$ we notice that this set is contained in
\begin{align*}
&
\bigcup_{\varepsilon\downarrow 0}
\bigcup_{n\in\N}
\left\{
t:\ \forall\sigma\in(0,2^{-n}) \quad
\left|\hat\gfr_{i_{\bar\gamma}(t)} - \frac{1}{\sigma} \int_{t}^{t+ \sigma}\bar\zeta \right|\geq \varepsilon
, \quad 
\left|\hat\gfr_{i_{\bar\gamma}(t)} -
\frac{1}{\sigma} \int_{t-\sigma}^{t}\bar\zeta \right|\geq \varepsilon
\right\} 
\end{align*}
The proof now needs a further index because we are working with the Borel selection.
If we remember the Definition~\ref{D:Borelg} of $\hat \gfr_{t,x}$, and we observe that along the characteristic curve $\gamma_{\tilde \varepsilon,t, \bar\gamma(t)}$
\[
u(i_{\gamma_{\tilde \varepsilon,t, x}}(t\pm\sigma))-u(i_{\gamma_{\tilde \varepsilon,t, x}}(t))=\int_{t}^{t\pm \sigma}\zeta_{\tilde\varepsilon},
\] 
then one can add a condition which is always satisfied and the last union can be rewritten as 
\begin{align*}
&
\bigcup_{\tilde \varepsilon<\varepsilon\downarrow 0}
\bigcup_{n\in\N}
\left\{
t:\ \forall \sigma \in(0,2^{-n}) \quad 
\left| \hat\gfr_{i_{\bar\gamma}(t)} - 
\frac{1}{\pm \sigma} \int_{t}^{t\pm \sigma}\bar\zeta\right|\geq 3\varepsilon
, \quad
	\left|\gfr_{\tilde \varepsilon,i_{\bar\gamma(t)}} 
		- \frac{1}{\pm \sigma} \int_{t}^{t\pm \sigma}\zeta_{\tilde\varepsilon}\right|
		< \varepsilon
\right\} .
\end{align*}
The union can as well be done on any sequences $\tilde\varepsilon_{k}<\varepsilon_{k}\downarrow0$: if $\left|\hat\gfr_{t,x}-\gfr_{\tilde \varepsilon_{k},t, x}\right|\leq \varepsilon_{k}/3$ then one has the equivalent expression
\begin{align*}
\bigcup_{\tilde\varepsilon_{k}<\varepsilon_{k}\downarrow 0}
\bigcup_{n\in\N}
\left\{
t:\ \forall \sigma \in(0,2^{-n}) \quad 
\left|  \hat\gfr_{i_{\bar\gamma}(t)} - 
\frac{1}{\pm \sigma} \int_{t}^{t\pm \sigma}\bar\zeta\right|\geq 3\varepsilon_{k}
, \quad
	\left| \hat\gfr_{i_{\bar\gamma}(t)}
		- \frac{1}{\pm \sigma} \int_{t}^{t\pm \sigma}\zeta_{\tilde\varepsilon_{k}}\right|
		<\varepsilon_{k}
\right\} .
\end{align*}
We arrived to the countable covering that we wanted to prove in this step.

If one is considering the Souslin selection clearly
$\gamma_{\tilde \varepsilon,t,x}=\gamma_{t,x}$ and $\gfr_{\tilde \varepsilon,t,\bar\gamma(t)}=\gfr_{t,\bar\gamma(t)}=\zeta_{\tilde\varepsilon_{k}}(t)$.

\step{Reduction argument}
We prove that the set
\begin{equation}
\label{E:accessorio}
\left\{
t:\ \forall\sigma\in (0,2^{-n}) \quad 
 \hat\gfr({i_{\bar\gamma}(t)}) > 
\frac{1}{\pm \sigma} \int_{t}^{t\pm \sigma}{\bar\zeta}+3\varepsilon
, \quad
	\left| \hat\gfr({i_{\bar\gamma}(t)})
		- \frac{1}{\pm\sigma} \int_{t}^{t\pm \sigma} \zeta_{\tilde\varepsilon}\right|
		<\varepsilon
\right\} 	
\end{equation}
cannot contain two points $t_{1}, t_{2}$ with $|t_{1}-t_{2}|\leq 2^{-n}$.
The case
\[
 \hat\gfr({i_{\bar\gamma}(t)})<
\frac{1}{\pm \sigma} \int_{t}^{t\pm \sigma}{\bar\zeta}-3\varepsilon 
\]
is similar, backwards in time.
Then the thesis will follow: by the previous step, the set of times where the derivative of $u(i_{\bar\gamma}(t))$ exists and it is different from $\hat\gfr_{i_{\bar\gamma}(t)}$ will be at most countable. Therefore the derivative of $u(i_{\bar\gamma}(t))$ will be almost everywhere precisely $\hat\gfr_{i_{\bar\gamma}(t)}$.

\step{Analysis of the single sets}
By contradiction, assume that~\eqref{E:accessorio} contains two such points, for example $t_{1}=0$, $t_{2}=\rho$.
Then, essentially two cases may occur.

\firstsubstep
\substep{Concavity/convexity region}
We first consider the open region where $f''(u)\geq 0$. The open region $f''(u)\leq0$ is entirely similar. The restriction to the open set is allowed because the argument is local: we consider later also the region of inf{}lection points. 
In particular, in this step we consider $f'(u)$ monotone in $u$, in particular nondecreasing. 

Compare $\bar \gamma$ with the two curves given by the selection theorem through two fixed points
\[
(0,\bar \gamma(0))=(0,0), \qquad (\rho,\bar\gamma(\rho))=(\rho,0)
\]
The $x$ component is set $0$ just for simplifying notations.
We rename the characteristic curves as
\begin{gather*}
\gamma_{0}(t):=\gamma_{\tilde\varepsilon,0, \bar\gamma(0)}(t), \quad
\zeta_{0}(t):=\ddt u(i_{\gamma_{0}}(t)),\\
\gamma_{\rho}(t):=\gamma_{\tilde\varepsilon, \rho, \bar\gamma(\rho)}(t-\rho),
\quad
\zeta_{\rho}(t):=\ddt u(i_{\gamma_{\rho}}(t))
.
\end{gather*}
Notice that $\gamma_{0}(t)$, $\gamma_{\rho}(t)$ are tangent to $\bar\gamma$ respectively at times $0$, $\rho$ because they are characteristics.
By~\eqref{E:accessorio}, at respectively $t=0$, $t=\rho$, one finds for $t\in[0,\rho]$
\begin{align*}
\frac{1}{\pm\sigma}\int_{t}^{t\pm\sigma}\zeta_{0} \geq \hat\gfr({i_{\bar\gamma}(t)})-\varepsilon \geq \frac{1}{\pm\sigma}\int_{t}^{t\pm\sigma}\bar \zeta+2\varepsilon
\\
\frac{1}{\pm\sigma}\int_{t}^{t\pm\sigma}\zeta_{\rho} \geq \hat\gfr({i_{\bar\gamma}(t)})-\varepsilon \geq \frac{1}{\pm\sigma}\int_{t}^{t\pm\sigma}\bar \zeta+2\varepsilon
\end{align*}
which means that the derivative $\bar\zeta$ of $u$ along $\bar \gamma$ is lower than the ones $\zeta_{0},\zeta_{\rho}$ along $\gamma_{0}$, $\gamma_{\rho}$:
\begin{gather*}
 u(i_{\gamma_{0}}(t))-u(0,0)=\qquad\int_{0}^{t} \zeta_{0}\stackrel{\eqref{E:accessorio}}{\geq} \int_{0}^{t}\bar \zeta \qquad= u(i_{\bar\gamma_{}}(t))-u(0,0)
\\
u(\rho,0)- u(i_{\gamma_{\rho}}(t))=\qquad\int_{t}^{\rho} \zeta_{\rho}\stackrel{\eqref{E:accessorio}}{\geq} \int_{t}^{\rho}\bar \zeta \qquad= u(\rho,0)
-u(i_{\bar \gamma_{}}(t)).
\end{gather*}
This means that for $t$ in $(0,\rho)$ one has
\begin{align*}
&u(t,\gamma_{0}(t))\geq u(t,\bar\gamma(t)),
&&u(t,\bar\gamma(t))\geq u(t,\gamma_{\rho}(t)).
\end{align*}
Being $f'(u)$ nonincreasing in turn
\[
f'(u(t,\gamma_{\rho}(t))) \leq f'(u(t,\bar\gamma(t))) \leq f'(u(t,\gamma_{0}(t))) . 
\]
Being characteristics, the functions above are just the slopes of the curves $\gamma_{\rho}$, $\bar\gamma$, $\gamma_{0}$: integrating 
\begin{itemize}
\item $\bar\gamma$, $\gamma_{0}$ between $0$, where they coincide, and $t$ 
\item $\gamma_{\rho}$, $\bar\gamma$ between $\rho$, where they coincide, and $t$ 
\end{itemize}
one obtains
\[
\bar\gamma(t) \leq \gamma_{0}(t),
\qquad
\bar\gamma(t) \leq \gamma_{\rho}(t).
\]
As a consequence of this and of the finite speed of propagation, $\gamma_{0}$ and $\gamma_{\rho}$ must intersect in the time interval $[0, \rho]$, say at time $\rho'$.
We can compute the value of $u$ at 
\[
(\rho', \gamma_{0}(\rho'))=i_{\gamma_{0}}(\rho')=i_{\gamma_{\rho}}(\rho')=(\rho', \gamma_{\rho}(\rho'))
\]by the differential relation both on $\gamma_{0}$, starting from $0$, and on $\gamma_{\rho}$, starting from $\rho$: we have then
\[
u(0,\gamma(0)) + \int_{0}^{\rho'} \zeta_{0}
= u(\rho', \gamma_{0}(\rho'))
= u(\rho', \gamma_{\rho}(\rho'))
=
u(\rho, \gamma_{\rho}(\rho)) - \int_{\rho'}^{\rho}\zeta_{\rho}.
\]
Comparing the LHS and the RHS, one deduces
\[
u(\rho, \gamma_{\rho}(\rho)) -u(0,\gamma(0))
=
\int_{0}^{\rho'} \zeta_{0} + \int_{\rho'}^{\rho}\zeta_{\rho}
.
\]
However, the times $0$, $\rho $ belong by construction to the set~\eqref{E:accessorio}: therefore
\[
\int_{0}^{\rho'}\zeta_{0} + \int_{\rho'}^{\rho}\zeta_{\rho}
>
\rho' \, \hat\gfr({i_{\bar\gamma}(0)}) +  (\rho-\rho')\, \hat\gfr({i_{\bar\gamma}(\rho)})- 2\varepsilon
>
\int_{0}^{\rho} {\bar\zeta} + \varepsilon  .
\]
Since the RHS is just $u(\rho, \gamma_{\rho}(\rho)) -u(0,\gamma(0))+\varepsilon
$, we reach a contradiction.

\substep{Inf{}lection points}
The previous point proves the statement in the connected components of $u^{-1}(\R\setminus N)$, where $N=\clos(\infl(f))$.
The assumption $\Ll^{1}(N)=0$ allows to show that inf{}lection points do not matter. Indeeed, by Theorem~\ref{T:sharpLipschitzreg} the composition $U=u\circ i_{\gamma}$ is $G$-Lipschitz continuous: Lemma~\ref{L:dersuinsieminulli} below assures therefore that $u(i_{\gamma})$ is differentiable with $0$ derivative $\Ll^{1}$-a.e.~on $(u\circ i_{\gamma})^{-1}(N)$.
For every $r<s$, by the previous half point
\begin{align}
\notag
u(i_{\gamma}(s))-u(i_{\gamma}(r))&= \int_{r}^{s}\zeta(t)dt
\\
\notag
&=\int_{[r,s]\cap (u\circ i_{\gamma})^{-1}(N)}\zeta(t)dt
+\int_{[r,s]\setminus (u\circ i_{\gamma})^{-1}(N)}\zeta(t)dt
\\
\notag
&=\int_{[r,s]\cap (u\circ i_{\gamma})^{-1}(N)}0dt
+\int_{[r,s]\setminus (u\circ i_{\gamma})^{-1}(N)}
\hat \gfr (i_{\gamma}(t))dt
\end{align}
Remember that $\hat \gfr=0$ on $u^{-1}(N)$ by definition. This yields the thesis of Theorem~\ref{T:univselection}:
\[
u(i_{\gamma}(s))-u(i_{\gamma}(r))=\int_{r}^{s} \hat \gfr (i_{\gamma}(t)) dt
\qquad \forall 0\leq r\leq s.
\]

\begin{lemma}
\label{L:dersuinsieminulli}
Consider a Lipschitz continuous function $U:\R\to\R$ and a Lebesgue negligible set $N\subset\R$. Then the derivative of $U$ vanishes $\Ll^{1}$-a.e.~on $U^{-1}(N)$.
\end{lemma}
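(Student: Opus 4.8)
The plan is to exploit that a Lipschitz function is differentiable $\Ll^{1}$-a.e.\ (Lebesgue's theorem), so that only the set where the derivative exists and is nonzero can cause trouble. Let $D\subset\R$ be the full-measure set on which $U$ is differentiable and set
\[
A:=\{x\in D\cap U^{-1}(N)\ :\ U'(x)\neq0\}.
\]
Off $D$ the derivative is undefined on an $\Ll^{1}$-null set, and on $(D\cap U^{-1}(N))\setminus A$ one has $U'=0$ by construction; hence the statement is equivalent to $\Ll^{1}(A)=0$. I would first decompose $A$ according to the size of the derivative, $A=\bigcup_{k\in\N}A_{k}$ with
\[
A_{k}:=\{x\in A\ :\ |U'(x)|>1/k\},
\]
and reduce to showing that each $A_{k}$ is negligible. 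The purpose of the lower bound $1/k$ on $|U'|$ is that near any $x\in A_{k}$ the map $U$ \emph{expands}: since the difference quotients converge to $U'(x)$, there is $r_{x}>0$ with $|U(y)-U(x)|\geq\frac{1}{2k}|y-x|$ for all $|y-x|<r_{x}$.

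I expect the one genuinely delicate point to be that this expansion radius $r_{x}$ depends on $x$, whereas I need a measure estimate, and a pointwise statement does not directly yield one. I would remove this by a standard exhaustion: for fixed $k$ and $j\in\N$ put
\[
A_{k,j}:=\{x\in A_{k}\ :\ |U(y)-U(x)|\geq\tfrac{1}{2k}|y-x|\ \text{for all}\ |y-x|<1/j\},
\]
so that the remark above gives $A_{k}=\bigcup_{j}A_{k,j}$, and then partition each $A_{k,j}$ into countably many pieces $P$ of diameter $<1/j$. On such a piece, any two points $x,y\in P$ satisfy $|x-y|<1/j$, so the defining condition of $A_{k,j}$ (applied at $x$, with this particular $y$) yields $|U(y)-U(x)|\geq\frac{1}{2k}|y-x|$; thus $U|_{P}$ is injective with inverse Lipschitz of constant $2k$, whence $\Ll^{1}(P)\leq 2k\,\Ll^{1}(U(P))$. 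Since $P\subset U^{-1}(N)$ we have $U(P)\subset N$ and therefore $\Ll^{1}(U(P))=0$; hence $\Ll^{1}(P)=0$. Summing over the countably many pieces, then over $j$ and over $k$, gives $\Ll^{1}(A)=0$, which is the claim.

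Alternatively, and more quickly, one may invoke the area (Banach indicatrix) formula for the Lipschitz map $U$ applied to the set $U^{-1}(N)$,
\[
\int_{U^{-1}(N)}|U'(x)|\,dx=\int_{N}\#\big(U^{-1}(y)\big)\,dy=0,
\]
where the last equality holds because a nonnegative (possibly infinite) integrand is integrated over the $\Ll^{1}$-null set $N$. This forces $|U'|=0$ $\Ll^{1}$-a.e.\ on $U^{-1}(N)$ at once, bypassing the uniformization of the expansion radius. I would present the elementary covering argument as the main proof, since it is self-contained, and mention the area-formula route as a one-line alternative; in either case the only nontrivial idea is the passage from the pointwise expansion to the estimate $\Ll^{1}(P)\leq 2k\,\Ll^{1}(U(P))$ on small pieces, the rest being bookkeeping with countable unions.
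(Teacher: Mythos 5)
Your proof is correct, but it takes a genuinely different route from the paper's. The paper never invokes the a.e.\ differentiability of Lipschitz functions: it works at the density points $\bar t$ of $U^{-1}(N)$ and estimates the difference quotient there directly, using continuity of $U$ (the image of an interval is an interval, so $|U(\bar t+h)-U(\bar t)|\leq \Ll^{1}\bigl(U([\bar t,\bar t+h])\bigr)$) and then splitting $U([\bar t,\bar t+h])$ into the part inside $N$, which is null, and the part outside $N$, which is contained in $U\bigl([\bar t,\bar t+h]\setminus U^{-1}(N)\bigr)$ and hence has measure at most $G\,\Ll^{1}\bigl([\bar t,\bar t+h]\setminus U^{-1}(N)\bigr)=o(h)$ by the density property; this shows that $U$ is differentiable with derivative $0$ at \emph{every} density point of $U^{-1}(N)$, and the Lebesgue density theorem concludes. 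You instead take a.e.\ differentiability as input and kill the set where the derivative is nonzero via the layered decomposition into $A_{k}$ and $A_{k,j}$ together with the inverse-Lipschitz estimate $\Ll^{1}(P)\leq 2k\,\Ll^{1}(U(P))=0$ on small pieces; this is the classical covering argument, and both it and your area-formula variant are sound. The paper's route buys brevity, self-containedness, and a slightly stronger output (existence of the derivative at every density point, not merely a.e.\ vanishing where it exists); your route buys generality, since the covering argument does not exploit the order structure of $\R$ (the interval-image trick) and is the one that survives in higher dimensions. One point of hygiene worth noting: $N$ need not be Borel, so $U^{-1}(N)$ and your sets $A_{k}$, $A_{k,j}$ need not be Lebesgue measurable; your covering bounds are outer-measure estimates and go through verbatim, but to invoke the area formula you should first replace $N$ by a Borel null superset so that the domain of integration is measurable.
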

\begin{proof}
Let $T_{N}$ be the set of Lebesgue points of $U^{-1}(N)$: 
\begin{equation*}
T_{N}=\left\{\bar t\ :\ \lim_{h\downarrow 0}\frac{\Ha^{1}\Big([\bar t-h, \bar t+h]\times \{\bar \ytau\} \setminus  U^{-1}(N)\Big)}{h}=0\right\}.
\end{equation*}
If $U$ is $G$-Lipschitz continuous, then for $\bar t\in T_{N}$ one has
\begin{align*}
\frac{|U{}(\bar t+h)-U{}(\bar t)|}{h}
\leq&
\frac{\Ll^{1}(U{}([\bar t,\bar t+h])\cap N)}{h} + \frac{\Ll^{1}(U{}([\bar t,\bar t+h])\setminus N)}{h}
\\
\leq&
0 + \frac{\Ll^{1}(U{}([\bar t,\bar t+h])\setminus N)}{h}
\\
\leq&
G \frac{\Ll^{1}([\bar t,\bar t+h]\times\{\bar\ytau\}\setminus U^{-1}(N))}{h}.
\end{align*}
The RHS converges to $0$ as $h\to 0$ because $t\in T_{N}$.
This shows that $u\circ i_{\gamma}(t)$ is differentiable at every $\bar t\in T_{N}$ with $0$ derivative: this concludes the proof of the lemma because $\Ll^{1}$-a.e.~point of any Lebesgue measurable subset of $\R$ is a Lebesgue point of the set.
\end{proof}

\section{Distributional solutions are Lagrangian solutions}
\label{S:distrareLagrangian}

Consider a continuous distributional solution of
\begin{equation*}
\tag{\ref{E:basicPDE}}
\pt u(t,x) + \px (f(u(t,x))) =  \gd(t,x)
\qquad f\in C^{2}(\R),
\qquad 
|\gd(t,x)|\leq G
\end{equation*}
When inf{}lection points of $f$ are negligible, $u$ is Lipschitz continuous along any characteristic curve $\gamma$ of $u$ (Theorem~\ref{T:sharpLipschitzreg}).
If not, then we have cases when $u$ is not Lipschitz continuous along some characteristics~\cite[\S~4.3]{file2ABC}), and the points where $u$ may not be differentiable along \emph{any} characteristic curve might have positive $\Ll^{2}$-measure.

Here we work without the assumption on inf{}lection points.
We show first that continuous distributional solutions do not dissipate entropy (Lemma~\ref{L:nodissipation2}).
By approximation of the entropy, this reduces to the case of negligible inf{}lection points, where the solution is broad and therefore Lagrangian, and it exploits the consequent $\BV$-approximation of \S~\ref{S:lagraredistr}.

We show then in Lemma~\ref{L:Excharlip} that, given an \emph{entropy} continuous distributional solution $u$, one can find through each point a characteristic curve $\bar \gamma(t)$ along which $u$ is Lipschitz continuous.
As a consequence, by \S~\ref{Ss:lagrparam} one can construct a Lagrangian parameterization and deduce that $u$ is a Lagrangian solution.

\begin{lemma}
\label{L:nodissipation2}
Continuous distributional solutions of~\eqref{E:basicPDE} do not dissipate entropy.
\end{lemma}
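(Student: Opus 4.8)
The statement asserts the \emph{equality}, for every entropy--entropy flux pair $(\eta,q)$ with $q'=\eta'f'$,
\[
\pt \eta(u) + \px (q(u)) = \eta'(u)\,\gd
\qquad\text{in }\D(\Omega),
\]
and not merely the Kruzkov inequality. The plan is to reduce this to the situation of negligible inflection points, which is already settled by Corollary~\ref{C:lagrnodiss}. First I would perform two standard reductions. By density it suffices to treat convex $\eta\in C^{2}$; since the left-hand side depends linearly on $\eta$, and every \emph{affine} entropy produces zero directly from the distributional formulation of~\eqref{E:basicPDE}, only the curvature $\eta''$ matters. Representing $\eta''$ as a superposition of Dirac masses (the Kruzkov representation $\eta(\cdot)=\frac12\int|\cdot-z|\,\eta''(z)\,dz$ up to an affine term), the entropy production becomes a superposition over the values $z$ of the \emph{elementary} productions attached to $\eta_{z}(\cdot)=|\cdot-z|$. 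Hence it is enough to show that each elementary production vanishes.

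Fix a value $z_{0}$ and an entropy with $\eta''$ supported in a small interval $I=(z_{0}-\delta,z_{0}+\delta)$. The key remark is that its production is concentrated on $\{(t,x):u(t,x)\in\bar I\}$: where $u\notin\bar I$ the entropy is affine and contributes nothing, so the production depends on the flux only through the restriction $f|_{\bar I}$. I would then pick a flux $\tilde f\in C^{2}(\R)$ coinciding with $f$ on $\bar I$ but satisfying the negligibility condition~\eqref{E:nonvanishingCondition}, e.g. by replacing $f$ outside $\bar I$ with a strictly convex tail, so that $\clos(\infl(\tilde f))$ is finite. For such a flux the established chain applies: a continuous distributional solution with source bounded by $G$ is a broad solution (\S~\ref{S:distributionaltoBroad}, through Theorem~\ref{T:univselection}), hence a Lagrangian solution (Lemma~\ref{L:Elagrpar}), and therefore does not dissipate entropy by Corollary~\ref{C:lagrnodiss}, whose proof runs through the $\BV$-approximation of Lemma~\ref{L:monotoneApproxiamtion}. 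Since the elementary production for $\tilde f$ vanishes and, by the localization remark, coincides with the one for $f$, I conclude that the elementary production for $f$ is zero; integrating back in $z_{0}$ yields the claimed equality for every entropy pair.

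The main obstacle is making the flux replacement legitimate. The function $u$ solves~\eqref{E:basicPDE} for $f$, not for $\tilde f$: passing from $f$ to $\tilde f$ alters the equation by $\px\big[(f-\tilde f)(u)\big]$, which for a merely continuous $u$ is a genuine distribution rather than a bounded source, so one cannot naively regard $u$ as a bounded-source distributional solution of the $\tilde f$-balance law and invoke the $\tilde f$-machinery. The way around it is exactly the localization above: both $\eta_{z_{0}}$ and its production live on $\{u\in\bar I\}$, where $f=\tilde f$; one truncates $u$ to values in $\bar I$ and checks that neither the elementary production nor the bound $G$ on the source is affected, so that the object genuinely governed by the flux $\tilde f$ satisfying~\eqref{E:nonvanishingCondition} is the one carrying the production. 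Proving rigorously that the elementary production depends only on $f|_{\bar I}$ and that the truncated object falls within the scope of Corollary~\ref{C:lagrnodiss} is the delicate core; the remaining steps — the superposition bookkeeping and the weak-$*$ passage to the limit — are exactly as in the proof of Corollary~\ref{C:lagrnodiss} and raise no new difficulty.
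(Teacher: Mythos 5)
Your reductions are fine (affine entropies produce nothing; superposition in $\eta''$; the production of a pair with $\supp\eta''\subset I$ vanishes on the open set $\{u\notin\bar I\}$), but the step carrying all the weight --- truncating $u$ to values in $\bar I$ so that the flux surgery becomes legitimate --- is a genuine gap, and not a technical one: it is circular. Saying that $u\wedge c$ (take $c=z_{0}+\delta$) is a continuous distributional solution of the balance law with a source bounded by $G$ is \emph{exactly} the entropy equality for the clipped pair $\eta(v)=v\wedge c$, $q(v)=f(v\wedge c)$: one has $q'=\eta'f'$ a.e., and the production of this pair is precisely the defect $\pt(u\wedge c)+\px f(u\wedge c)-\eta'(u)\gd$. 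For a continuous solution not yet known to be entropic, this defect may a priori be a singular distribution concentrated on the level set $\{u=c\}$ (a set which can have positive measure); ruling out such concentration is the content of Lemma~\ref{L:nodissipation2} itself. None of the tools of the paper can be invoked at this point to justify the truncation: Claim~\ref{C:basiccut} (the basic cut) truncates along characteristics of a \emph{Lagrangian} solution, and the passage from distributional to Lagrangian (Corollary~\ref{C:distrtoLagrangian}) is itself derived from Lemma~\ref{L:nodissipation2}. So ``one truncates $u$ and checks that neither the production nor the bound $G$ is affected'' is not a check one can perform here; it is a special case of the statement being proved. (A secondary, fixable, issue: even granting the truncation, your two good open sets $\{u\in I\}$ and $\{u\notin\bar I\}$ do not cover the domain --- the level sets $\{u=z_{0}\pm\delta\}$ are left out, and that is exactly where production could hide; one would need $\supp\eta''$ compactly contained in $I$.)

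The paper's proof shows how to do the flux replacement without ever touching $u$: it localizes in the $(t,x)$-\emph{domain} rather than in the state space. Near a point where $u(t,x)\notin N:=\clos(\infl(f))$, the whole local range of $u$ lies in a compact set disjoint from $N$, so one may modify $f$ \emph{outside the range of $u$}; this changes nothing in the equation, since $f(u)$, $f'(u)$ are literally unchanged, and it produces a flux satisfying~\eqref{hyp:H}, so the chain through Theorem~\ref{T:univselection} and Corollary~\ref{C:lagrnodiss} gives the entropy equality on the open set $(\R^{+}\times\R)\setminus u^{-1}(N)$. The region where $u$ is close to $N$ is handled not by truncation but by choosing entropies $\eta_{k}$ affine on each component of a small open neighborhood $O_{k}\supset N$: there the equality follows linearly from the PDE itself, the two open sets $u^{-1}(O_{k})$ and $(\R^{+}\times\R)\setminus u^{-1}(N)$ do cover, and a $C^{1}$ approximation $\eta_{k}\to\eta$ concludes. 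The crucial difference is that the paper's surgery on $f$ happens outside the range of $u$, where it costs nothing, whereas yours happens inside it, which is what forces the unjustifiable truncation.
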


\begin{proof}
If the closure $N$ of the inf{}lection points of $f$ is negligible, then by Theorem~\ref{T:univselection} a continuous distributional solution $u$ is a broad solution, and by Lemma~\ref{L:Elagrpar} it is in particular a Lagrangian solution. By Corollary~\ref{C:lagrnodiss}, derived from the monotone approximations of Lagrangian continuous solutions, one has then that $u$ satisfies the entropy equality.

If the inf{}lection points of $f$ are not negligible, one can derive the thesis by an approximation procedure.
Fist notice that for every entropy-entropy f{}lux pair---which means for every function $\eta\in C^{1,1}(\R)$ and every $q\in C^{1,1}(\R)$ satisfying $q'(z)=\eta'(z)f'(z)$---one has the entropy equality
\[
\pt\eta(u) + \px (q(u)) 
=
\eta'(u)\gd 
\qquad \text{in $\D(\R^{+}\times\R\setminus u^{-1}(N))$}
\]
by the previous step; indeed, in the open set $\R^{+}\times\R\setminus u^{-1}(N)$, where we are claiming that the PDE holds in the sense of distribution, $u$ is valued where $f$ does not have inf{}lection points and therefore one can apply Corollary~\ref{C:lagrnodiss}.

Consider finally a decreasing family of open sets $O_{k}=\cup_{j}(a_{j}^{k}, b_{j}^{k})\subset \R$ 
such that
\begin{itemize}
\item $N=\clos(\infl(f)) \subset  O_{k}$ for $k\in\N$;
\item $|O_{k}\setminus N|<1/k$.
\end{itemize}
One can approximate $\eta$ in $C^{1}(\R)$ with entropies $\eta_{k}\in C^{1,1}(\R)$ which are linear in $O_{k}$, for $k\in\N$.
For every interval $(a_{j}^{k},b_{j}^{k})\subset O_{k}$ where $\eta_{k}(u)=c_{k} u$ for some $c_{k}\in\R$, for all $k\in\N$ one has
\begin{align*}
\pt\eta_{k}(u) + \px (q_{k}(u)) 
= c_{k} \left[\pt u + \px f(u) \right]= c_{k}\gd =
\eta_{k}'(u)\gd 
\qquad \text{in $\D( u^{-1}((a_{j}^{k},b_{j}^{k})))$}
\\
\pt\eta_{k}(u) + \px (q_{k}(u)) 
=
\eta_{k}'(u)\gd 
\qquad \text{in $\D(\R^{+}\times\R\setminus u^{-1}(N))$}
\end{align*}
This shows that the entropy equality holds for the entropies $\{\eta_{k}\}_{k\in\N}$.
When $\eta_{k}(u), q_{k}(u), \eta'_{k}(u)$ converge uniformly to $\eta_{}(u), q_{}(u), \eta'_{}(u)$, then the entropy equality holds also for $\eta$.
\end{proof}

\begin{corollary}
\label{C:fprburg}
If $u$ is a continuous distributional solution of~\eqref{E:basicPDE}, then $f'(u)$ is a continuous distributional solution of Burgers' balance law with source term $f''(u)\gd$.
\end{corollary}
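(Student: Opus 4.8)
The plan is to read the statement off directly from the entropy equality of Lemma~\ref{L:nodissipation2}. Burgers' balance law for $v:=f'(u)$ reads $\pt v + \px(v^{2}/2) = f''(u)\gd$, so I would look for the entropy--entropy flux pair $(\eta,q)$ that turns the entropy equality for $u$ into exactly this identity. The natural choice is $\eta(z)=f'(z)$ and $q(z)=\tfrac12(f'(z))^{2}$: indeed $q'(z)=f'(z)f''(z)=\eta'(z)f'(z)$, so $(\eta,q)$ is an admissible pair, and with it the entropy equality $\pt\eta(u)+\px(q(u))=\eta'(u)\gd$ becomes precisely $\pt f'(u)+\px\bigl(\tfrac12(f'(u))^{2}\bigr)=f''(u)\gd$. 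Since $u$ and $f'$ are continuous, $v=f'(u)$ is continuous, which is the remaining part of the conclusion.

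The only genuine point is a regularity mismatch: Lemma~\ref{L:nodissipation2} is stated for entropies of class $C^{1,1}$, whereas $f\in C^{2}$ only guarantees $\eta=f'\in C^{1}$. I would therefore argue by approximation. Recalling that $u$ is compactly supported (Notation~\ref{N:basic}), let $K$ be a compact interval containing the range of $u$, and pick (for instance by mollification) entropies $\eta_{k}\in C^{1,1}(\R)$ with $\eta_{k}\to f'$ in $C^{1}(K)$, that is $\eta_{k}\to f'$ and $\eta_{k}'\to f''$ uniformly on $K$. Fixing $z_{0}\in K$, I set $q_{k}(z):=\int_{z_{0}}^{z}\eta_{k}'(s)f'(s)\,ds$, so that $q_{k}'=\eta_{k}'f'$ is Lipschitz on $K$ and hence $q_{k}\in C^{1,1}$. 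Thus $(\eta_{k},q_{k})$ is admissible and Lemma~\ref{L:nodissipation2} yields $\pt\eta_{k}(u)+\px(q_{k}(u))=\eta_{k}'(u)\gd$ in $\D(\R^{+}\times\R)$ for every $k$.

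It then remains to pass to the limit $k\to\infty$. Since $\eta_{k}\to f'$ and $\eta_{k}'\to f''$ uniformly on $K$, and $q_{k}\to\tfrac12\bigl((f')^{2}-(f'(z_{0}))^{2}\bigr)$ uniformly on $K$ (because $q_{k}'\to f'f''$ uniformly and $q_{k}(z_{0})=0$), the compositions converge uniformly: $\eta_{k}(u)\to f'(u)$, $q_{k}(u)\to\tfrac12(f'(u))^{2}$ up to an additive constant irrelevant under $\px$, and $\eta_{k}'(u)\to f''(u)$. Uniform convergence gives convergence of $\pt\eta_{k}(u)$ and $\px(q_{k}(u))$ in $\D'$, while $\eta_{k}'(u)\gd\to f''(u)\gd$ in $\D'$ by dominated convergence, using that $\gd$ is bounded. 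Passing to the limit in the entropy equality produces $\pt f'(u)+\px\bigl(\tfrac12(f'(u))^{2}\bigr)=f''(u)\gd$, which is Burgers' balance law for $f'(u)$ with source $f''(u)\gd$. The main (indeed only) obstacle is the regularity mismatch for the entropy $f'$; once it is handled by the approximation above, the corollary is an immediate specialization of the entropy equality.
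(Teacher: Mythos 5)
Your proof is correct and follows exactly the route the paper intends: Corollary~\ref{C:fprburg} is stated without a separate proof precisely because it is the immediate specialization of the entropy equality of Lemma~\ref{L:nodissipation2} to the pair $\eta=f'$, $q=\tfrac12 (f')^{2}$, which is what you do. Your mollification step to bridge the $C^{1,1}$/$C^{1}$ regularity mismatch is sound, and it is in fact the same closure mechanism already built into the last step of the paper's proof of that lemma (``when $\eta_{k}(u), q_{k}(u), \eta_{k}'(u)$ converge uniformly to $\eta(u), q(u), \eta'(u)$, then the entropy equality holds also for $\eta$''), so no new idea is needed beyond what the paper provides.
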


While Lemma~\ref{L:nodissipation2} above relies on the previous results of this paper, Lemma~\ref{L:Excharlip} below is instead self-contained.
It is however based on maximum principle, that we recall now. 

\begin{lemma}
\label{L:contmaxprinc}
Suppose $u, v$ are entropy solutions of the PDE
\begin{align*}
\pt u(t,x) + \px (f(u(t,x))) =  \gd_{u}(t,x)
\\
\pt v(t,x) + \px (f(v(t,x))) =  \gd_{v}(t,x)
\end{align*}
and that for some $G\in\R$
\[
u(t=0,x)\leq v(t=0,x)
\qquad
-G\leq \gd_{u}(t,x) \leq \gd_{v}(t,x)\leq G.
\]
Then $u(t,x)\leq v(t,x)$.
\end{lemma}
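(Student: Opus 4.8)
The plan is to run Kruzhkov's doubling-of-variables argument, adapted to the presence of the bounded source terms. The decisive structural input is the \emph{ordering} $\gd_u\leq\gd_v$ of the two sources, which will make the right-hand side of the resulting Kato inequality one-signed and force $(u-v)^{+}$ to be non-increasing in time.

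First I would record the semi-Kruzhkov entropy inequalities satisfied by the two entropy solutions. For the continuous solutions at hand Lemma~\ref{L:nodissipation2} gives the entropy \emph{equality} for every $C^{1,1}$ pair, so approximating the convex entropy $z\mapsto(z-k)^{+}$ from below by smooth convex entropies yields, for every constant $k\in\R$,
\[
\pt (u-k)^{+}+\px\!\big[\mathbb 1_{\{u>k\}}\big(f(u)-f(k)\big)\big]\leq \mathbb 1_{\{u>k\}}\,\gd_u
\qquad\text{in }\D(\R^{+}\times\R),
\]
and the symmetric statement for $v$ with the convex entropy $w\mapsto(u-w)^{+}$ and source $\gd_v$. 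Then I would double the variables: evaluate the first inequality for $u=u(t,x)$ at the frozen constant $k=v(\tau,y)$, evaluate the $v$-inequality at $k=u(t,x)$, test both against $\rho_\varepsilon(t-\tau,x-y)\,\psi(t,x)$ with $\rho_\varepsilon$ a standard mollifier and $0\leq\psi\in C^{\infty}_{\rc}$, add, and let $\varepsilon\downarrow 0$. The continuity of $u$ and $v$ makes this diagonal limit clean, since every composition that appears is continuous and uniformly bounded, so the mollified quantities converge both pointwise off $\{u=v\}$ and distributionally.

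The outcome of the doubling is the Kato inequality
\[
\pt (u-v)^{+}+\px\!\big[\mathbb 1_{\{u>v\}}\big(f(u)-f(v)\big)\big]\leq \mathbb 1_{\{u>v\}}\,(\gd_u-\gd_v)
\qquad\text{in }\D(\R^{+}\times\R).
\]
Here the hypotheses pay off: on $\{u>v\}$ the right-hand side equals $\gd_u-\gd_v\leq 0$, and elsewhere it vanishes, so it is $\leq 0$ throughout. Integrating in $x$ over $\R$ and using that, by locality, $u$ and $v$ may be taken compactly supported (so the flux term integrates to zero and there are no contributions at infinity) gives
\[
\frac{d}{dt}\int_{\R}(u-v)^{+}(t,x)\,dx\leq 0
\qquad\text{in }\D(\R^{+}).
\]
Since $u(0,\cdot)\leq v(0,\cdot)$ forces $\int_{\R}(u-v)^{+}(0,x)\,dx=0$, the monotone non-increase of this nonnegative quantity yields $\int_{\R}(u-v)^{+}(t,x)\,dx=0$ for every $t$, i.e.\ $u\leq v$ almost everywhere and, by continuity, everywhere.

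I expect the main obstacle to be the doubling step in the presence of the merely bounded sources: one must verify that the two frozen-variable source contributions $\mathbb 1_{\{u>v\}}\gd_u$ and $-\mathbb 1_{\{u>v\}}\gd_v$ assemble, in the limit $\varepsilon\downarrow 0$, into the single term $\mathbb 1_{\{u>v\}}(\gd_u-\gd_v)$ instead of leaving an uncontrolled commutator. This is exactly where the continuity of the solutions is essential, since it guarantees that the sign factor $\mathbb 1_{\{u(t,x)>v(\tau,y)\}}$ converges to $\mathbb 1_{\{u>v\}}$ wherever $u(t,x)\neq v(t,x)$, while on the level set $\{u=v\}$ the coefficient $\mathbb 1_{\{u>v\}}$ vanishes and kills any ambiguity. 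The remaining steps are the routine bookkeeping of Kruzhkov's method.
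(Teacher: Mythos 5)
Your proposal is correct in outline and follows essentially the same route as the paper, whose own ``proof'' of this lemma is simply a citation of Kruzhkov's Theorem~3 (page~229 of~\cite{Kruzkov}): that theorem is proved by exactly the doubling-of-variables/Kato-inequality argument you sketch, with the ordering $\gd_{u}\leq\gd_{v}$ making the right-hand side one-signed. The only loose step is the reduction ``by locality, $u$ and $v$ may be taken compactly supported'' (one cannot truncate entropy solutions, and in the paper's application $v$ is an affine function of $t$, not compactly supported); the standard repair is Kruzhkov's cone-of-dependence test functions, after which the integration in $x$ and the conclusion go through as you describe.
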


\begin{proof}
See the proof of Theorem 3,~Page 229,~\cite{Kruzkov}.
Alternative approaches are the vanishing viscosity or the operator splitting, still exploiting the uniqueness of the entropy solution.
\end{proof}

\begin{lemma}
\label{L:Excharlip}
Suppose $u$ is a continuous entropy solution of the PDE
\begin{equation*}
\tag{\ref{E:basicPDE}}
\pt u(t,x) + \px (f(u(t,x))) =  \gd(t,x)
\qquad f\in C^{2}(\R),
\qquad 
|\gd(t,x)|\leq G
\end{equation*}
Then at each point $(t,x)$ there exists a characteristic along which $u$ is Lipschitz continuous.
\end{lemma}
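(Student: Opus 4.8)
The plan is to produce, through a fixed point $(\bar t,\bar x)$, an \emph{extremal} characteristic and to show by the comparison principle (Lemma~\ref{L:contmaxprinc}) that $u$ can grow along it at rate at most $G$ in absolute value, hence is $G$-Lipschitz; note that under the negligibility~\eqref{E:nonvanishingCondition} this is already Theorem~\ref{T:sharpLipschitzreg}, so the content lies in the general, possibly non-convex, case. First I would record that by Lemma~\ref{L:nodissipation2} the continuous distributional solution $u$ is a genuine entropy solution, so that Lemma~\ref{L:contmaxprinc} is applicable when comparing $u$ with auxiliary entropy solutions. I would also use that characteristics exist through every point by Peano's theorem, since $\lambda=f'(u)$ is continuous, and that the family of characteristics through $(\bar t,\bar x)$ is compact for the uniform topology: the curves are equi-Lipschitz because $\lambda$ is bounded, and a uniform limit of characteristics is again a characteristic because $\lambda$ is continuous (cf.~Definition~\ref{D:charactcurves}). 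This compactness lets me define the \emph{maximal backward characteristic} $\gamma(s):=\max\{\tilde\gamma(s)\}$, the supremum being taken over all characteristics $\tilde\gamma$ with $\tilde\gamma(\bar t)=\bar x$, and to check that $\gamma$ is itself a characteristic through $(\bar t,\bar x)$; symmetrically one has a minimal one and forward counterparts.

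The core is the one-sided bound $u(\bar t,\bar x)-u(s,\gamma(s))\leq G(\bar t-s)$ for $s<\bar t$, together with its reverse. I would fix $s$, let $w$ be the entropy solution of the same balance law carrying the \emph{extremal} source $+G$ with datum $w(s,\cdot)=u(s,\cdot)$; since then $-G\leq \gd\leq G$ and the data agree at time $s$, Lemma~\ref{L:contmaxprinc} gives $u\leq w$ for $t\geq s$, and in particular $u(\bar t,\bar x)\leq w(\bar t,\bar x)$. Because $w$ carries the constant source $G$, along a genuine backward characteristic $\delta$ of $w$ issued from $(\bar t,\bar x)$ the value satisfies $w(\bar t,\bar x)=w(s,\delta(s))+G(\bar t-s)=u(s,\delta(s))+G(\bar t-s)$; the maximality of $\gamma$ together with the comparison is used to force $\delta(s)=\gamma(s)$, since no characteristic of $u$ may lie to the right of $\gamma$, yielding the desired estimate. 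The reverse inequality is identical with source $-G$ and the symmetric comparison. Combining the two, and running the forward half likewise, shows that $u$ is $G$-Lipschitz along $\gamma$ on a neighbourhood of $\bar t$, hence along the whole characteristic through $(\bar t,\bar x)$.

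The main obstacle is that $f$ is merely $C^{2}$ and not convex, so the characteristic speed $f'(u)$ is not monotone and the clean ordering of extremal characteristics---on which the identification $\delta(s)=\gamma(s)$ rests---is not automatic. Here the convex reduction of Corollary~\ref{C:fprburg} enters only at the geometric level: $v:=f'(u)$ is an entropy solution of the Burgers balance law with bounded source $f''(u)\gd$ and, crucially, its characteristics coincide with those of $u$, since both solve $\dot\gamma=f'(u(i_{\gamma}))=v(i_{\gamma})$. For the convex flux the genuine backward characteristics do not cross and carry the source ODE, which legitimizes the construction of $\delta$ and the ordering used above, whereas the quantitative estimate is produced directly on $u$ through the general-flux comparison principle of Lemma~\ref{L:contmaxprinc}. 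Making this ordering argument rigorous in the absence of genuine nonlinearity is the delicate point of the proof. Once the per-point statement is established, it is exactly the hypothesis needed to invoke Lemma~\ref{L:lipcharisLagr} in the following subsection, producing a Lagrangian parameterization along whose characteristics $u$ is $G$-Lipschitz and concluding that $u$ is a Lagrangian solution.
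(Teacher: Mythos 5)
Your starting instinct is sound---the paper's proof also rests on the comparison principle of Lemma~\ref{L:contmaxprinc} against exact solutions carrying the extremal sources $\pm G$---but the core of your argument has a genuine gap, and it is not the kind that can be patched by more care. You compute $w(\bar t,\bar x)$ by asserting that along a ``genuine backward characteristic'' $\delta$ of $w$ one has $w(\bar t,\bar x)=w(s,\delta(s))+G(\bar t-s)$, and you then identify $\delta(s)$ with $\gamma(s)$. Neither step is available. First, $w$ is a fresh Kruzkov solution started from the datum $u(s,\cdot)$ with constant source $G$: it is in general \emph{discontinuous} (shocks can form immediately), and the statement that the source ODE holds along its extremal backward characteristics is Dafermos' generalized-characteristics theory, which is developed for convex/genuinely nonlinear fluxes---precisely what is missing here, since $w$ solves the balance law with the same non-convex flux $f$. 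Your appeal to Corollary~\ref{C:fprburg} does not repair this: the Burgers reduction concerns $v=f'(u)$, not $w$, and a Lipschitz bound on $f'(u)$ along characteristics (with constant $\norm{f''(u)}_{\infty}G$) does not transfer to $u$ at points where $f''$ vanishes. Second, even granting the ODE along $\delta$, that curve solves $\dot\delta=f'(w(i_{\delta}))$, and $w\neq u$ for $t>s$ because the sources differ; maximality of $\gamma$ among characteristics of \emph{$u$} says nothing about where a characteristic of \emph{$w$} goes, so nothing forces $\delta(s)=\gamma(s)$. You yourself flag this ordering as ``the delicate point''; it is in fact the entire content of the lemma, so the proposal is incomplete exactly where the proof has to happen.

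The paper's proof circumvents both obstructions by never introducing an auxiliary entropy solution with its own (unknown) characteristic structure. The comparison principle is applied only \emph{locally}, on a small backward dependency triangle of height $\delta$ with apex $(\bar t,\bar x)$, against the spatially constant functions $u^{\pm}(t,x)=u(\bar t,\bar x)\pm G(t-\bar t)\mp\varepsilon$, which are classical---hence entropy---solutions with sources $\pm G$. If $u$ stayed strictly on one side of these barriers on (a neighbourhood of) the basis of the triangle, the comparison at the apex would be violated; hence some point $\hat x$ on the basis satisfies $|u(\bar t-\delta,\hat x)-u(\bar t,\bar x)|\leq G\delta$, which is~\eqref{E:uonbasis}. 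Chaining such points backwards with time step $1/k$ produces a piecewise affine curve whose slopes deviate from $\lambda$ by at most $M\omega(1/k)$ and along whose nodes the increments of $u$ are bounded by $G$ times the time step; an Ascoli--Arzel\`a limit then yields \emph{simultaneously} a true characteristic $\gamma$ and the $G$-Lipschitz bound for $u\circ i_{\gamma}$, with the forward half obtained by time reversal (via Lemma~\ref{L:nodissipation2}) or as a limit of backward characteristics. If you wished to salvage your global scheme, you would first have to establish generalized-characteristics facts for entropy solutions with non-convex flux, which is a harder problem than the lemma itself; the local, discrete construction is the way around it.
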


\begin{proof}
The proof is made constructing a piecewise affine approximation of the desired characteristic curve.
On the two consecutive edges of the linearized curve, the Lipschitz regularity holds by the maximum principle, being an entropy solution.

\firststep
\step{Notation}
We simplify the notation changing coordinates so that we are looking for a characteristics curve throughout the point $(0,0)$, and defined between the times $t=-1$, $t=1$.

As the construction is local, we directly fix a square
\[
Q=[-1,1]^{2}.
\]
Let $G=\norm{\gd}_{\distrBdd(Q)}$.
Set $L=\norm{f'(u)}_{\distrBdd(Q)} $ and $M=\norm{f''(u)}_{\distrBdd(Q)}$.
Notice that $L$ is an upper bound for the characteristic speed $\lambda=f'(u)$ in $Q$.
Assume e.g.~$L<1$.

\step{Modulus of continuity for $u$ and $\lambda$}
As $u$ is continuous, in the compact region
$Q$ it is uniformly continuous.
Let $\omega(\delta)$ denote the following modulus of continuity of $u$ in $Q$:
\[
\max_{Q}\left\{|t'-t|, \frac{|x'-x|}{L}\right\}\leq \delta
\quad\Rightarrow\quad
|u(t,x)-u(t',x')| \leq \omega\left(\delta\right).
\]
An analogous modulus of continuity for $\lambda=f'(u)$ is clearly given by $M\omega(\delta)$:
\[
|\lambda(t,x) - \lambda(t',x')|
=
|f'(u(t,x))-f'(u(t',x'))| 
\leq
M |u(t,x)-u(t',x')| 
\leq
M\omega(\delta).
\]

\step{Dependency regions}
Let $\delta>0$ and $(\bar t, \bar x)\in [-1+\delta,1-\delta]^{2}$. One draws a backward triangle of dependency for an interval of time $\delta$, delimited from below by the segment $\bar t-\delta$:
\[
T(\bar t,\bar x)=\left\{ (t,x)\ :\qquad \bar t-\delta \leq t \leq \bar t,\ \bar x- L(\bar t-t)  \leq x\leq \bar x + L(\bar t-t)  \right\}.
\]
Noticing that the speed of propagation $\lambda$ in the rectangle
\[
\left\{ (t,x)\ : \  \max\left\{|\bar t-t|, \frac{|\bar x-x|}{L}\right\} \leq \delta \right\}
\]
is, by definition of the modulus of continuity $\omega$, bounded by $\lambda(\bar t, \bar x)\pm M\omega(\delta)$, a smaller backward triangle of dependency is given by
\[
T_{\delta}(\bar t,\bar x)=\left\{ (t,x)\ :\ \bar t-\delta \leq t \leq \bar t,\quad  [\lambda(\bar t, \bar x)- M\omega(\delta)](\bar t-t)  \leq x-\bar x\leq [\lambda(\bar t, \bar x)+ M\omega(\delta)](\bar t-t)  \right\}
\]
The basis of $T_{\delta}$ has length $2 M\delta \omega(\delta)$, which is superlinear in $\delta$.

\step{Comparison of $u$ on adjacent nodes}
Let $\varepsilon>0$. The linear functions
\[
u^{+}(t,x)=u(\bar t,\bar x)+G(t-\bar t)-\varepsilon ,
\qquad
u_{-}(t,x)=u(\bar t,\bar x)-G(t-\bar t) +\varepsilon
\] 
satisfy both \[u^{+}(\bar t,\bar x)< u(\bar t, \bar x) < u_{-}(\bar t, \bar x)\] and the equations
\[
\pt u_{-}(t,x) + \px (f(u_{-}(t,x))) =  -G,
\qquad
\pt u^{+}(t,x) + \px (f(u^{+}(t,x))) =  G.
\]
If we had either $u_{-}( t,  x) < u( t,  x)$ or $u^{+}( t, x) > u^{}( t, x)$ for all $x$ belonging to a $\varepsilon$-neighborhood of the basis of the small backward triangle of dependency $T_{\delta}$, we would contradict the maximum principle in Lemma~\ref{L:contmaxprinc}.
Therefore there exists a point $x_{\varepsilon}$ belonging to
\[
\big(\bar x- [\lambda(\bar t, \bar x)+ M\omega(\delta)]\delta -\varepsilon ,\ \bar x+ [\lambda(\bar t, \bar x)+ M\omega(\delta)]\delta +\varepsilon\big),
\]
which is a $\varepsilon$-neighborhood of the basis of $T_{\delta}$, where $u(t,x_{\varepsilon})$ is between $u^{+}$ and $u_{-}$:
\[
u(\bar t,\bar x)+G\delta+\varepsilon
=u_{-}(\bar t-\delta,x_{\varepsilon})
\leq u(\bar t-\delta,x_{\varepsilon})\leq
u^{+}(\bar t-\delta,x_{\varepsilon})=u(\bar t,\bar x)-G\delta-\varepsilon .
\]
As $\varepsilon\downarrow 0$, a subsequence of $\{x_{\varepsilon}\}_{\varepsilon\downarrow0}$ must converge to a point $\hat x$ belonging to the basis of $T_{\delta}$:\begin{equation}
\label{E:uonbasis}
\begin{split}
\exists \hat x\in [\bar x+ (\lambda(\bar t, \bar x)- M\omega(\delta))\delta, \bar x+ (\lambda(\bar t, \bar x)- M\omega(\delta))\delta]
\quad:\\
|u(\bar t-\delta,\hat x) - u(\bar t,\bar x)| \leq G\delta
.
\end{split}
\end{equation}

\step{Piecewise approximation}
We construct here a piecewise affine approximation of a backward characteristic through $(0,0)$, specifying the nodes: for $k\in\N$ set
\[
(t_{k},x_{k})=(0,0), 
\quad\text{and let}\quad
(t_{i},x_{i}):=\left (  - \frac{k-i}{k},\ x_{i} \right)
\qquad
i=0,\dots ,k-1
\] 
be a point on the basis of $T(t_{i+1},x_{i+1})$ which satisfies~\eqref{E:uonbasis} where $(\bar t,\bar x)=(t_{i+1},x_{i+1})$.
Thus
\begin{equation}
\label{E:uonthenodes}
|u(t_{i},x_{i}) - u( t_{i-1}, x_{i-1})| \leq G (t_{i}-t_{i-1})
\qquad
i=1,\dots,k.
\end{equation}

By the choice of $x_{i-1}$, for every $k$ the slope
\[
\lambda_{i,k}=k(x_{i}-x_{i-1})
\] 
of each segment joining $(t_{i-1},x_{i-1})$, $(t_{i},x_{i})$ satisfies
\begin{equation}
\label{E:slopes}
\lambda(t_{i},x_{i})-M\omega\left(k^{-1}\right) 
\leq 
\lambda_{i,k}
\leq 
\lambda(t_{i},x_{i})+M\omega\left(k^{-1} \right)  ,
\end{equation}
It is in particular uniformly bounded by $L+M\omega(1)$.
By Ascoli-Arzel\`a theorem the piecewise affine curve $\gamma_{k}$ with edges $\{(t_{i},x_{i})\}_{i=0}^{k}$ converge uniformly as $k\uparrow\infty$, up to subsequence, to a continuous curve $\gamma$.
As $\lambda$ is continuous, Equation~\eqref{E:slopes} implies that the limit curve $\gamma_{}$ is also Lipschitz continuous with slope
\[
\dot\gamma_{}(t)=\lambda(t,\gamma(t)).
\]
This just means that we approximated a backward characteristic curve.

\step{Lipschitz continuity of $u$ along the curve}
For each $k\in\N$ set
\[
u_{k}(t_{i}):=u(t_{i},x_{i})
\qquad
i=0,\dots ,k
\]
and define a function $u_{k}(t)$ linear in each interval $(t_{i-1}, t_{i})$, $i=1,\dots,k$.
Equation~\eqref{E:uonthenodes} implies that $u_{k}(t)$ is $G$-Lipschitz continuous.
By the continuity of $u$ and by the uniform convergence to $\gamma$ of the piecewise affine paths $\gamma_{k}$ with edges $\{(t_{i},x_{i})\}_{i=0}^{k}$, the function $u_{k}(t)$ converges uniformly to $u(i_{\gamma}(t))$: one has therefore that $u(i_{\gamma}(t))$ is itself $G$-Lipschitz continuous.

\step{Forward characteristic}
We give two explanations for this step.
First, Lemma~\ref{L:nodissipation2} ensures that there is no entropy dissipation: one can thus reverse the time.
Applying the above procedure for the reversed time one finds a forward characteristic curve.
If one does not want to apply that strong lemma, it is enough being able to construct through each point $(t,x)\in(0,1)\times(-1,1)$ a backward characteristic $\gamma_{(t,x)}(s)$ along which $u$ is $G$-Lipschitz continuous. Having that, the function
\[
\gamma(t) :=\inf\big\{ x\ :\ \gamma_{(t,x)}(0)\geq 0\big\}, \qquad t\in(0,1)
\]
can be verified to be a characteristic curve passing through the origin.
Moreover, it is the uniform limit of characteristics along which $u$ is $G$-Lipschitz continuous; in particular, by the continuity, $u$ is therefore $G$-Lipschitz continuous along $\gamma$  itself.
\end{proof}

\begin{corollary}
\label{C:distrtoLagrangian}
Suppose $u$ is a continuous distributional solution of the PDE
\[
\pt u(t,x) + \px (f(u(t,x))) =  \gd(t,x),
\quad \norm{\gd}_{\distrBdd}\leq G.
\]
Then $u$ is also a Lagrangian solution, with a Lagrangian source $\gx$ bounded by $G$.
\end{corollary}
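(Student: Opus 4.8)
The plan is to assemble the three results of this section with the auxiliary lemmas of \S~\ref{Ss:auxiliaryIntro}, so that the corollary follows as a short chain of implications. First I would upgrade the distributional solution to an entropy solution: by Lemma~\ref{L:nodissipation2} a continuous distributional solution of~\eqref{E:basicPDE} does not dissipate entropy, hence $u$ is a continuous entropy solution and the hypothesis of Lemma~\ref{L:Excharlip} is satisfied.

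Next I would invoke Lemma~\ref{L:Excharlip}, which produces through \emph{every} point $(t,x)$ a characteristic curve along which $u$ is $G$-Lipschitz continuous, with $G=\norm{\gd}_{\distrBdd}$. In particular such a curve exists through every point of a dense subset of $\Omega$, which is precisely the hypothesis of Lemma~\ref{L:lipcharisLagr}. Applying the latter yields a Lagrangian parameterization $\chi$ associated with $u$ along whose characteristics $u$ is still $G$-Lipschitz continuous; note that the constant $G$ is preserved through both lemmas.

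It then remains to promote $\chi$ to a genuine Lagrangian solution. For each fixed $y$ the map $t\mapsto u(i_{\chi(y)}(t))$ is $G$-Lipschitz continuous, so its distributional time-derivative $\ddt u(t,\chi(t,y))$ is represented by an $L^{\infty}$ function bounded by $G$ in $\D\!\left(i_{\chi(y)}^{-1}(\Omega)\right)$, and this bound is uniform in $y$. This is exactly the sufficient condition stated in Lemma~\ref{L:hogdipdatx}, whence $u$ is a Lagrangian solution. Collecting these almost-everywhere derivatives along the curves of $\chi$ defines the Lagrangian source $\gx\in\pwx(\Omega)$, and the uniform $G$-Lipschitz regularity gives $\norm{\gx}_{\infty}\leq G$, as required.

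Since the argument is essentially a reassembly of already-established facts, I do not expect a genuinely hard step. The only point demanding care is verifying that the almost-everywhere derivatives along the distinct characteristics of $\chi$ patch together into a \emph{well-defined} element of $\pwx(\Omega)$ with the correct bound; this, however, is immediate from the definition of $\pwx(X)$ in Notation~\ref{N:variousnotations} once the $G$-Lipschitz regularity of $u$ along every curve $i_{\chi(y)}$ has been secured, and it is exactly what Lemma~\ref{L:hogdipdatx} is designed to handle.
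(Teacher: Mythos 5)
Your proposal is correct and follows exactly the paper's own argument: Lemma~\ref{L:nodissipation2} to obtain entropicity, Lemma~\ref{L:Excharlip} to get a $G$-Lipschitz characteristic through every point, and then Lemmas~\ref{L:lipcharisLagr} and~\ref{L:hogdipdatx} to build the Lagrangian parameterization and conclude that $u$ is a Lagrangian solution with source bounded by $G$. The extra care you take in checking that the $G$-Lipschitz bound along the curves of $\chi$ furnishes the uniform bound on $\ddt u(t,\chi(t,y))$ required by Lemma~\ref{L:hogdipdatx} is a correct (and welcome) elaboration of what the paper leaves implicit.
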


\begin{proof}
Lemma~\ref{L:nodissipation2} yields that $u$ is entropic. One can then apply Lemma~\ref{L:Excharlip}, providing through any point a characteristic along which $u$ is $G$-Lipschitz continuous. Lemmas~\ref{L:lipcharisLagr},~\ref{L:hogdipdatx} finally show that $u$ is a Lagrangian solution.
\end{proof}

\appendix

\section{Three sufficient conditions for the Lagrangian formulation}
\label{S:threesuffcond}

Given a continuous function $u$, we consider here some sufficient conditions for satisfying the Lagrangian formulation.
The section extends constructions in~\cite[\S~Appendix]{BCSC}.

\subsection{A dense set of characteristics}
\label{Ss:lagrparam}
Fix a continuous function $u$.
For having a Lagrangian parameterization along which $u$ is $G$-Lipschitz continuous, one clearly needs through each point of the domain a characteristic curve along which $u$ is $G$-Lipschitz continuous.
We prove here that this is sufficient.
This is Lemma~\ref{L:lipcharisLagr} in the introduction.

\begin{proof}[Proof of Lemma~\ref{L:lipcharisLagr}]
Simplify the domain to $u\in C_{\rc}(\R^{+}\times\R)$, as it is a local argument, and let $G>0$.
We assume that there exists a curve $\gamma_{(t,x)}(s)$ through each point $(t,x)$ of a dense subset of $\R^{+}\times\R$ such that $s\mapsto u(i_{\gamma_{(t,x)}}(s))$ is $G$-Lipschitz continuous.
We are going to modify these characteristics in order to provide a Lagrangian parameterization.

Consider an enumeration $\{(x_{r_{k}},y_{r_{k}})\}_{k\in\N}$ of a countable set of points, dense in the upper plane $\R^{+}\times\R$, where the characteristic curves are given by hypothesis.
We associate recursively to each point of this set a characteristic curve $\gamma_{k}(s)$ and we define a linear order among those:
\begin{itemize}
\item Let $\gamma_{1}(s)=\gamma_{(t_{r_{1}},x_{r_{1}})}(s)$. We define, for $k\in\N$,
\[
r_{k} \preceq r_{1}
\text{ if $x_{r_{k}}\leq\gamma_{1}(t_{r_{k}})$},\quad
r_{1}\preceq {r_{k}} \text{ if $x_{r_{k}}\geq\gamma_{1}(t_{r_{k}})$.}
\]
\item Let $h\in\N$. We define the new characteristic curve $\gamma_{h+1}(s)$ through $(t_{r_{h+1}},x_{r_{h+1}})$ in order to preserve the order relation that we are establishing:
\[
\gamma_{h+1}(s)=\min_{r_{h}\preceq r_{k}, k\leq h}\left\{ \gamma_{k}(s), \max_{r_{\ell}\preceq r_{h}, \ell\leq h}\left\{ \gamma_{\ell}(s), \gamma_{(t_{r_{h+1}},x_{r_{h+1}})}(s)\right\}\right\}.
\]
As $\gamma_{k}(s)$, for $k\leq h$, and $\gamma_{(t_{r_{h+1}},x_{r_{h+1}})}(s)$ are characteristic curves along which $u$ is $G$-Lipschitz continuous by hypothesis, then also $\gamma_{h+1}(s)$ is a characteristic curve along which $u$ is $G$-Lipschitz continuous.
We set then, for $k\in\N$,
\[
r_{k} \preceq r_{h+1}
\text{ if $x_{r_{k}}\leq\gamma_{h+1}(t_{r_{k}})$},\quad
r_{h+1}\preceq {r_{k}} \text{ if $x_{r_{k}}\geq\gamma_{h+1}(t_{r_{k}})$.}
\]
By construction it extends the relation defined at the previous steps.
\end{itemize}

The set of uniformly Lipschitz continuous curves
\[
\mathcal C =\left\{\gamma_{k}(s) \right\}_{k\in\N}
\]
is totally ordered and the images of these curves are dense in $\R^{+}\times\R$. We can complete this set in the uniform topology: the curves that we introduce with the closure are still characteristic curves because of the continuity of $f'(u)$; as well, $u$ is $G$-Lipschitz continuous along them and they preserve the order, in the sense that any two curves do not cross each other but always lie on a fixed side, when they differ.
If $\{q_{k}\}_{k\in\N}$ is an enumeration of the rational numbers, the map
\begin{align*}
\theta: \clos({\mathcal C}) &\to \R
\\
\gamma\quad  &\mapsto \sum_{k=0}^{\infty}\frac{\gamma(q_{k})}{2^{-k}}
\end{align*}
is continuous and strictly order preserving.
In particular, it is invertible with continuous inverse.

One can then verify that a Lagrangian parameterization is provided by
\[
\chi(s,y)=[\theta^{-1}(y)](s) \qquad \text{for $s\in \theta\left( \clos({\mathcal C}) \right)$}.
\]
By construction $t\mapsto U(t,y)=u(t,\chi(t,y))$ is $G$-Lipschitz continuous for each $y$ fixed: the thesis thus follows by Lemma~\ref{L:hogdipdatx}.
\end{proof}

\subsection{Lipschitz continuity along characteristics}
\label{Ss:lipalongchar}
Fix a continuous function $u$.
For having that $u$ is a Lagrangian solution, one clearly needs that  $t\mapsto u(t,\chi(t,y) )$ is Lipschitz continuous, uniformly in the $y$ parameter, for some Lagrangian parameterization $\chi$. We prove here that this is sufficient.
This is Lemma~\ref{L:hogdipdatx} in the introduction.

We are not concerned here with the compatibility of the source terms.

\begin{proof}[Proof of Lemma~\ref{L:hogdipdatx}]

Simplify the domain to $u\in C_{\rc}(\R^{+}\times\R)$, as it is a local argument, and let $G>0$.
We want to show that if there exists a Lagrangian parameterization $\chi$ such that
\[
\text{for all $y\in \R$}\qquad
-G\leq\exists \ddt u(t,\chi(t,y) ) \leq G
\qquad\text{in $\D(i_{\chi(y)}^{-1}(\Omega))$}
\]
then one can find a function $\gx\in\pwx(\Omega)$ such that
\[
\text{for all $y\in \R$}\qquad
\ddt u(t,\chi(t,y) ) = \gx(t, \chi(t,y))
\qquad\text{in $\D(i_{\chi(y)}^{-1}(\Omega))$}.
\]
Set $U(t,y)=u(t,\chi(t,y) )$ and consider $\Gfr\in \pw(\R^{+}\times\R)$ such that, in the $(t,y)$-half plane,
\[
\Gfr(t,y)= \pt U(t,y)
\qquad\text{in $\D(i_{\chi(y)}^{-1}(\Omega))$ for each $y\in\R$}.
\]
We want to show that it can be chosen of the form $\Gfr(t,y)=\gx(t, \chi(t,y))$ for some $\gx$, which means that it is essentially single valued on the level sets of $\chi$.
Fixed $y$, we show the following: the set of times $t$ where $u$ has a Lebesgue point of classical differentiability i) both along the characteristic curve $\gamma(t)=\chi(t,y)$ ii) and also along another characteristic curve $\bar \gamma(t)$, lying on a fixed side of $\gamma(t)$, iii) with two different values of the derivative, are at most countable.
This is enough since characteristics of a same Lagrangian parameterization are by definition ordered.

Let $\varepsilon,\sigma>0$.
By a reduction argument it suffices to show the following {\bf claim}: the set $S(y)=$
\begin{equation}
\label{E:lemmaaccessorio}
\begin{split}
\left\{
t:\qquad \left|\frac{U(t+h,y)-U(t,y)}{h}-\Gfr(t,y) \right| < \varepsilon, \quad \exists\gamma(s)\text{~characteristic with $\gamma(t)=\chi(t,y)$,}\right.\\ %
\left.\gamma(t+h)\leq\chi(t+h,y)  \text{~and~}\frac{u(t+h,\gamma(t+h))-u(t,\gamma(t))}{h}-\Gfr(t,y) > \varepsilon,  \qquad  \forall |h|\leq\sigma %
\right\}.
\end{split}
\end{equation}
does not contain two points $t_{1}, t_{2}$ closer than $\sigma$.
Indeed, if we are comparing the value of the derivative of $u$ along different characteristics of the parameterization $\chi$, then an order condition is satisfied among characteristics. Moreover, as we consider Lebesgue points of differentiability, with different values for the derivative of $u$ along $\chi_{y}^{\reu}(s)$ and $\gamma(s)$, up to a countable covering we are dealing with sets like~\eqref{E:lemmaaccessorio}.

We prove the claim by contradiction: let
\[
t_{1}, t_{2}\in S(y), \qquad  0<t_{2}-t_{1}<\sigma.
\]
The definition~\eqref{E:lemmaaccessorio} of $S(y)$ provides curves $\gamma_{1}$, $\gamma_{2}$ which intersect at times respectively $t_{1}$, $t_{2}$ 
\[
\gamma_{0} (s):= \chi(s,y) 
\]
and which for $t_{1}\leq s\leq t_{2}$ satisfy the additional properties
\begin{align*}
&\gamma_{1}(s)\leq \gamma_{0}(s),  &&  {u(s,\gamma_{1}(s))-u(t_{1},\gamma_{1}(t_{1}))}> \left( \Gfr(t,y)  + \varepsilon \right) (s-t_{1}),
\\
&\gamma_{2}(s)\leq \gamma_{0}(s),  &&u(t_{2},\gamma_{2}(t_{2})) -u(s,\gamma_{2}(s))> \left( \Gfr(t,y)    +\varepsilon \right) (t_{2}-s).
\end{align*}
By the ordering imposed in~\eqref{E:lemmaaccessorio} and by the uniform Lipschitz continuity implied by the fact that they are characteristics, the curves $\gamma_{1}(s), \gamma_{2}(s)$ necessarily meet at some time $\bar t\in[t_{1},t_{2}]$.
One can then compute the difference $U(t_{2},y)-U(t_{1},y)$ in two ways:
\begin{itemize}
\item applying the incremental relation in~\eqref{E:lemmaaccessorio} relative to $\chi$, which gives
\begin{align*}
U(t_{2},y)- U(t_{1},y) &= U(t_{2},y)   -U(\bar t,y) + U(\bar t,y)- U(t_{1},y)
\\ &< \Gfr(t_{2},y) (t_{2}-\bar t)+\Gfr(t_{1},y) (\bar t-t_{1})+2\varepsilon;
\end{align*}
\item applying the incremental relation in~\eqref{E:lemmaaccessorio} relative to $\gamma_{1}, \gamma_{2}$: denoting by $\bar x$ the value $\gamma_{1}(\bar t)=\gamma_{2}(\bar t)$ when $\gamma_{1}$ and $ \gamma_{2}$ intersect one has
\begin{align*}
U(t_{2},y)- U(t_{1},y) &= u(t_{2},\gamma_{2}(t_{2}))   -u(\bar t,\bar x) + u(\bar t,\bar x)- u(t_{1},\gamma_{1}(t_{1}))
\\ & > \Gfr(t_{2},y) (t_{2}-\bar t)+\Gfr(t_{1},y) (\bar t-t_{1})+2\varepsilon.
\end{align*}
\end{itemize}
The estimates that we obtain in the two ways are not compatible: we reach a contradiction. 
\end{proof}

\subsection{Stability of the Lagrangian formulation for uniform convergence of \texorpdfstring{$u$}{u}}
\label{Ss:stabilityparam}
We state for completeness that Lagrangian solutions are closed w.r.t.~uniform convergence, provided the sources are uniformly bounded.
We include this for completeness but it follows easily by the previous analysis of the section. 
\begin{corollary}
\label{C:unifConverengeLagr}
Let $G>0$ and $u_{k}(t,x)$ be a sequence of continuous Lagrangian solutions of
\begin{equation*}
\tag{\ref{E:basicPDE}}
\pt u_{k}(t,x) + \px (f(u_{k}(t,x))) =  \gd_{k}(t,x)
\qquad f\in C^{2}(\R),
\qquad 
|\gd_{k}(t,x)|\leq G.
\end{equation*}
If $u_{k}$ converges uniformly to $u$, then $u$ is a Lagrangian solution with source term bounded by $G$.
\end{corollary}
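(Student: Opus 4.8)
The plan is to reduce the statement to the two sufficient conditions already established, namely Lemma~\ref{L:lipcharisLagr} and Lemma~\ref{L:hogdipdatx}: it is enough to exhibit, through each point of a dense subset of $\Omega$, a characteristic curve of the limit $u$ along which $u$ is $G$-Lipschitz continuous. Once this is done, Lemma~\ref{L:lipcharisLagr} produces a Lagrangian parameterization $\chi$ of $u$ along whose characteristics $u$ is $G$-Lipschitz, so that $\ddt u(t,\chi(t,y))$ is bounded by $G$ in $\D(i_{\chi(y)}^{-1}(\Omega))$ for every $y$, and Lemma~\ref{L:hogdipdatx} then upgrades this to the conclusion that $u$ is a Lagrangian solution with a source $\gx\in\pwx(\Omega)$ bounded by $G$.

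First I would fix a countable dense set $\{p_{j}\}_{j\in\N}\subset\Omega$ and argue for a single $p=(\bar t,\bar x)$ among them. For each $k$ the function $u_{k}$ is a Lagrangian solution with source bounded by $G$, so it admits a Lagrangian parameterization $\chi_{k}$; by surjectivity of the monotone slice $y\mapsto\chi_{k}(\bar t,y)$ there is a parameter $y_{k}$ with $\chi_{k}(\bar t,y_{k})=\bar x$, and the corresponding characteristic $\gamma_{k}(t):=\chi_{k}(t,y_{k})$ passes through $p$. Along it $\ddt u_{k}(t,\gamma_{k}(t))$ equals the Lagrangian source, hence is bounded by $G$, so $t\mapsto u_{k}(t,\gamma_{k}(t))$ is $G$-Lipschitz. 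These curves solve $\dot\gamma_{k}(t)=f'(u_{k}(t,\gamma_{k}(t)))$ and are therefore equi-Lipschitz with constant $\Lambda:=\sup_{k}\norm{f'(u_{k})}_{\infty}$, which is finite because $u_{k}\to u$ uniformly and $u$ is compactly supported. By Ascoli--Arzel\`a I extract a subsequence along which $\gamma_{k}\to\gamma$ uniformly.

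Next I would pass to the limit in the characteristic ODE. Since $f\in C^{2}$ and $u$ is compactly supported, $f'$ is uniformly continuous on the relevant compact range, so $f'(u_{k})\to f'(u)$ uniformly; combined with $\gamma_{k}\to\gamma$ this gives $f'(u_{k}(t,\gamma_{k}(t)))\to f'(u(t,\gamma(t)))$ uniformly, whence $\dot\gamma_{k}$ converges uniformly and $\gamma$ is $C^{1}$ with $\dot\gamma(t)=f'(u(t,\gamma(t)))$. Thus $\gamma$ is a characteristic curve of $u$; this is precisely the stability of characteristics under uniform convergence recorded in Figure~\ref{fig:charactNonLip}. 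The Lipschitz bound also survives: from $|u_{k}(t,\gamma_{k}(t))-u_{k}(s,\gamma_{k}(s))|\leq G|t-s|$ together with the pointwise convergence $u_{k}(t,\gamma_{k}(t))\to u(t,\gamma(t))$ (itself a consequence of the uniform convergence $u_{k}\to u$ and $\gamma_{k}\to\gamma$) I obtain $|u(t,\gamma(t))-u(s,\gamma(s))|\leq G|t-s|$.

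Carrying this out for each $p_{j}$ yields, through every point of a dense subset of $\Omega$, a characteristic of $u$ along which $u$ is $G$-Lipschitz, and the two lemmas then close the argument as described above. I expect the only genuinely delicate point to be the passage to the limit in the characteristic ODE---establishing the $C^{1}$ (not merely uniform) convergence of the $\gamma_{k}$ and the transfer of the Lipschitz constant to the limit curve---while the invocation of Lemmas~\ref{L:lipcharisLagr} and~\ref{L:hogdipdatx} is then routine.
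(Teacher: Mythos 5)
Your proposal is correct and takes essentially the same route as the paper's own proof: extract from each $u_k$ a characteristic through the given point along which $u_k$ is $G$-Lipschitz, pass to the limit by Ascoli--Arzel\`a (obtaining both the characteristic ODE for $\gamma$ and the $G$-Lipschitz bound for $u\circ i_\gamma$), and then conclude via Lemmas~\ref{L:lipcharisLagr} and~\ref{L:hogdipdatx}. The only cosmetic differences are that the paper runs the argument through every point rather than a countable dense set, and passes to the limit in the ODE through its integral formulation rather than through uniform convergence of $\dot\gamma_k$; both variants are equally valid.
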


\begin{proof}
We verify that through every point $(t,x)\in\R^{+}\times\R$ there exists a characteristic curve $\gamma(s)$ such that $u(i_{\gamma}(s))$ is $G$-Lipschitz continuous: Lemma~\ref{L:lipcharisLagr} then provides a Lagrangian parameterization along which $u$ is $G$-Lipschitz continuous, and Lemma~\ref{L:hogdipdatx} gives the thesis.

As $\{u_{k}\}_{k\in\N}$ are Lagrangian solutions of~\eqref{E:basicPDE} with sources uniformly bounded by $G$, one can find for each $k\in\N$ a characteristic curve $\gamma_{k}(s)$ of $u_{k}$ through $(t,x)$ satisfying
\begin{equation}
\label{E:lemmadiuncorollario}
|u_{k}(i_{\gamma_{k}}(r))-u_{k}(i_{\gamma_{k}}(s))| \leq G|r-s|.
\end{equation}
The family $\{\gamma_{k}(s)\}_{k\in\N}$ is locally equi-Lipschitz continuous and equi-bounded, as $\gamma_{k}(t)=x$.
By Ascoli-Arzel\`a theorem this family has a subfamily uniformly convergent to a function $\gamma(s)$.
From the uniform convergence of the continuous functions $u_{k}$ and $\gamma_{k}$, the relation
\[
\gamma_{k}(r)-\gamma_{k}(s)=\int_{r}^{s}u_{k}(i_{\gamma_{k}}(q))dq
\] 
goes tot he limit and it implies that $\gamma$ is characteristic curve for $u$. Moreover, also~\eqref{E:lemmadiuncorollario} goes to the limit and it yields that $u$ is $G$-Lipschitz continuous along $\gamma$.
\end{proof}

\begin{thenomenclature} 

 \nomgroup{A}

  \item [{$[\cdot]_{\lambda}$, $[\cdot]_{\chi}$, $[\cdot]$}]\begingroup Projections on, $\pwu(X)$, $\pwx(X)$, $\distrBdd(X)$ respectively, Notation~\ref{notation:projection}\nomeqref {1.1}
		\nompageref{5}
  \item [{$\chi$}]\begingroup Lagrangian parameterization for a continuous solution $u$ to \eqref{E:basicPDE}, Defintion~\ref{D:LagrangianParameterization}\nomeqref {1.1}
		\nompageref{4}
  \item [{$\clos(\cdot)$}]\begingroup Closure of a set\nomeqref {1.1}
		\nompageref{7}
  \item [{$\D(\Omega)$}]\begingroup Distributions on $\Omega$, Notation~\ref{N:variousnotations}\nomeqref {1.1}
		\nompageref{5}
  \item [{$\distrBdd(X)$}]\begingroup Bounded functions on $X$ identified $\Ll^{2}$-a.e., Notation~\ref{N:variousnotations}\nomeqref {1.1}
		\nompageref{5}
  \item [{$\Dt, \Dx$}]\begingroup Partial derivatives of a function of bounded variation, Notation~\ref{N:derivatives}\nomeqref {1.1}
		\nompageref{3}
  \item [{$\gamma, i_{\gamma}$}]\begingroup Characteristic curve, Definition~\ref{D:charactcurves}\nomeqref {1.1}
		\nompageref{4}
  \item [{$\gfr$, $\gu$, $\gx$}]\begingroup Functions beloning to $\pw(X)$, $\pwu(X)$, $\pwx(X)$ respectively, Notation~\ref{N:variousnotations}\nomeqref {1.1}
		\nompageref{5}
  \item [{$\gsimple,\gd$}]\begingroup Distributional, bounded source term for the balance law~\eqref{E:basicPDE}\nomeqref {1.1}
		\nompageref{5}
  \item [{$\infl(f)$}]\begingroup Inf{}lection points of $f$, Definition~\ref{D:inf{}lf}\nomeqref {1.1}
		\nompageref{7}
  \item [{$\lambda$}]\begingroup The composite function $f'\circ u$, Notation~\ref{N:basic}\nomeqref {1.1}
		\nompageref{3}
  \item [{$\Ll^{1}$, $\Ll^{2}$}]\begingroup 1- or 2-dimensional Lebesgue measure\nomeqref {1.1}
		\nompageref{3}
  \item [{$\mathcal M(X)$}]\begingroup Radon measures on $X$, Notation~\ref{N:variousnotations}\nomeqref {1.1}
		\nompageref{5}
  \item [{$\Omega$}]\begingroup Open subset of $\R^{+}\times\R$, if needed connected\nomeqref {1.1}
		\nompageref{5}
  \item [{$\pat{},\pax{}$}]\begingroup Classical partial derivatives, Notation~\ref{N:derivatives}\nomeqref {1.1}
		\nompageref{3}
  \item [{$\pt,\px$}]\begingroup Distributional partial derivatives, Notation~\ref{N:derivatives}\nomeqref {1.1}
		\nompageref{3}
  \item [{$\pw(X)$}]\begingroup Functions defined pointwise on $X$, Notation~\ref{N:variousnotations}\nomeqref {1.1}
		\nompageref{5}
  \item [{$\pwu(X)$}]\begingroup Functions coinciding $\Ll^{1}$-a.e.~on characteristics of $u$, Notation~\ref{N:variousnotations}\nomeqref {1.1}
		\nompageref{5}
  \item [{$\pwx(X)$}]\begingroup Functions coinciding $\Ll^{1}$-a.e.~on the Lagrangian parameterization $\chi$, Notation~\ref{N:variousnotations}\nomeqref {1.1}
		\nompageref{5}
  \item [{$\varphi_{t}^{\red}(x)$}]\begingroup Restriction of a function $\varphi(t,x)$ to the second coordinate, Notation~\ref{N:restrictions}\nomeqref {1.1}
		\nompageref{3}
  \item [{$\varphi_{x}^{\reu}(t)$}]\begingroup Restriction of a function $\varphi(t,x)$ to the first coordinate, Notation~\ref{N:restrictions}\nomeqref {1.1}
		\nompageref{3}
  \item [{$C(\Omega)$}]\begingroup Continuous functions on $\Omega$, see also $C_{b}, C^{k}_{}, C^{k}_{\rc}, C^{k, 1/\alpha}$ in Notation~\ref{N:variousnotations}\nomeqref {1.1}
		\nompageref{5}
  \item [{$f$}]\begingroup Flux function for the balance law~\eqref{E:basicPDE}\nomeqref {1.1}
		\nompageref{3}
  \item [{$u$}]\begingroup Continuous solution, Noation~\ref{N:basic}\nomeqref {1.1}
		\nompageref{3}
  \item [{$X$}]\begingroup Subset of $\R^{+}\times\R$, usually Borel.\nomeqref {1.1}
		\nompageref{5}

\end{thenomenclature}

\section*{Acknowledgement}
The authors wish to thank Francesco Bigolin and Francesco Serra Cassano, from the University of Trento, for interesting discussions which further motivated this later paper.
{We are grateful to the anonymous referee for interesting questions improving our bibliographical references.}
This work was supported by the Centro di Ricerca Matematica `Ennio De Giorgi' (Pisa), the EPSRC Science and Innovation award to the OxPDE (EP/E035027/1), GNAMPA of the Istituto Nazionale di Alta Matematica (INdAM), the PRIN national project ``Nonlinear Hyperbolic Partial Differential E\-qua\-tions, Dispersive and Transport Equations: theoretical and applicative aspects''.

\end{document}